\providecommand{\U}[1]{\protect\rule{.1in}{.1in}}
\newtheorem{theorem}{Theorem}
\newtheorem{definition}{Definition}
\newtheorem{lemma}{Lemma}
\newtheorem{proposition}{Proposition}
\newtheorem{remark}{Remark}
\makeatletter\@addtoreset {equation}{section}\makeatother
\begin{document}

\title[Bifurcations of multi-vortex configurations]{\textbf{Bifurcations of multi-vortex configurations \\in rotating Bose--Einstein condensates}}

\author[C. Garc\'{\i}a--Azpeitia and D.E. Pelinovsky]{C. Garc\'{\i}a--Azpeitia$^{1}$ and D.E. Pelinovsky$^{2}$}

\address{ $^{1}$ Departamento de Matem\'{a}ticas, Facultad de Ciencias, Universidad Nacional Aut\'{o}noma de M\'{e}xico, Mexico City, Distrito Federal, Mexico, 04510} \email{cgazpe@ciencias.unam.mx}

\address{$^{2}$ Department of Mathematics and Statistics, McMaster University, Hamilton, Ontario, Canada, L8S 4K1} \email{dmpeli@math.mcmaster.ca}

\date{\today}
\maketitle

\begin{abstract}
We analyze global bifurcations along the family of radially symmetric vortices
in the Gross--Pitaevskii equation with a symmetric harmonic potential and a
chemical potential $\mu$ under the steady rotation with frequency $\Omega$.
The families are constructed in the small-amplitude limit when the chemical
potential $\mu$ is close to an eigenvalue of the Schr\"{o}dinger operator for
a quantum harmonic oscillator. We show that for $\Omega$ near $0$, the Hessian
operator at the radially symmetric vortex of charge $m_{0}\in\mathbb{N}$ has
$m_{0}(m_{0}+1)/2$ pairs of negative eigenvalues.
When the parameter $\Omega$ is increased, $1+m_{0}(m_{0}-1)/2$ global bifurcations happen.
Each bifurcation results in the disappearance of a pair of negative eigenvalues in
the Hessian operator at the radially symmetric vortex. The distributions of
vortices in the bifurcating families are analyzed by using symmetries of the
Gross--Pitaevskii equation and the zeros of Hermite--Gauss eigenfunctions.
The vortex configurations that can be found in the bifurcating families are the asymmetric
vortex $(m_0 = 1)$, the asymmetric vortex pair $(m_0 = 2)$,
and the vortex polygons $(m_0 \geq 2)$.

\noindent\textbf{Keywords:} Gross--Pitaevskii equation, rotating vortices, harmonic
potentials, Lyapunov--Schmidt reductions, bifurcations and symmetries.
\end{abstract}

\section{Introduction}

This work addresses the Gross-Pitaevskii equation describing rotating
Bose-Einstein condensates (BEC) placed in a symmetric harmonic trap. It is now
well established from the energy minimization methods that vortex
configurations become energetically favorable for larger rotating frequencies
(see review \cite{review} for physics arguments). Ignat and Millot
\cite{IM1,IM2} confirmed that the vortex of charge one near the center of
symmetry is a global minimizer of energy for a frequency above the first
critical value. Seiringer \cite{Seiringer} proved that a vortex configuration
with charge $m_{0}$ becomes energetically favorable to a vortex configuration
with charge $(m_{0}-1)$ for a frequency above the $m_{0}$-th critical value
and that radially symmetrically vortices of charge $m_{0} \geq 2$ cannot be
global minimizers of energy. The questions on how the $m_{0}$ individual
vortices of charge one are placed near the center of symmetry to form an
energy minimizer remain open since the time of \cite{IM1,IM2,Seiringer}.

For the vortex of charge one, it is shown by using variational approximations
\cite{Castin} and bifurcation methods \cite{PeKe13} that the construction of
energy minimizers is not trivial past the threshold value for the rotation
frequency, where the radially symmetric vortex becomes a local minimizer of
energy\footnote{The threshold value of the rotation frequency for the
bifurcation of local minimizers in \cite{Castin,PeKe13} is smaller than the
first critical value in \cite{IM1,IM2}, at which the charge-one vortex becomes
the global minimizer of energy.}. Namely, in addition to the radially
symmetric vortex, which exists for all rotation frequencies, there exists
another branch of the asymmetric vortex solutions above the threshold value,
which are represented by a vortex of charge one displaced from the center of
rotating symmetric trap. The distance from the center of the harmonic trap
increases with respect to the detuning rotation frequency above the threshold
value, whereas the angle is a free parameter of the asymmetric vortex
solutions. Although the asymmetric vortex is not a local energy minimizer, it
is nevertheless a constrained energy minimizer, for which the constraint
eliminates the rotational degree of freedom and defines the angle of the
solution family uniquely. Consequently, both radially symmetric and asymmetric
vortices are orbitally stable in the time evolution of the
Gross--Pitaevskii equation for the rotating frequency slightly above the
threshold value \cite{PeKe13}.

Further results on the stability of equilibrium configurations of several
vortices of charge one in rotating harmonic traps were found numerically, from
the predictions given by the finite-dimensional system for dynamics of
individual vortices \cite{theo2,theo1,Navarro}. The two-vortex equilibrium
configuration arises again above the threshold value for the rotation
frequency with the two vortices of charge one being located symmetrically with
respect to the center of the harmonic trap. However, the symmetric vortex pair
is stable only for small distances from the center and it losses stability for
larger distances. Once it becomes unstable, another asymmetric pair of two
vortices bifurcate, where one vortex has a smaller-than-critical distance from
the center and the other vortex has a larger-than-critical distance from the
center. The asymmetric pair is stable in numerical simulations and coexist for
rotating frequencies above the threshold value with the stable symmetric
vortex pair located at the smaller-than-critical distances \cite{Navarro}. The
symmetric pair is a local minimizer of energy above the threshold value,
whereas the asymmetric pair is a local constrained minimizer of energy, where
the constraint again eliminates the rotational degree of freedom \cite{KevPelNew}.

This work continues analysis of local bifurcations of vortex configurations in
the Gross--Pitaevskii (GP) equation with a cubic repulsive interaction and a
symmetric harmonic trap. In a steadily rotating frame with the rotation
frequency $\Omega$, the main model can be written in the normalized form
\begin{equation}
i u_{t} = -(\partial_{x}^{2} + \partial_{y}^{2}) u + (x^{2}+y^{2}) u +
\left\vert u\right\vert ^{2}u + i \Omega(x \partial_{y} - y \partial_{x}) u,
\quad(x,y) \in\mathbb{R}^{2}. \label{Ec}%
\end{equation}
The associated energy of the GP equation is given by
\begin{equation}
E(u) = \int\!\int_{\mathbb{R}^{2}} \left[  |\nabla u|^{2} + |x|^{2} |u|^{2} +
\frac{1}{2} |u|^{4} + \frac{i}{2} \Omega\bar{u}(x \partial_{y} - y
\partial_{x}) u - \frac{i}{2} \Omega u (x \partial_{y} - y \partial_{x})
\bar{u} \right]  dx dy. \label{Energy}%
\end{equation}
Compared to work in \cite{PeKe13}, we do not use the scaling for the
semi-classical limit of the GP equation and parameterize the vortex solutions
in terms of the chemical potential $\mu$ arising in the separation of
variables $u(t,x,y) = e^{-i \mu t} U(x,y)$. The profile $U$ satisfies the
stationary GP equation in the form
\begin{equation}
\mu U = -(\partial_{x}^{2} + \partial_{y}^{2}) U + (x^{2}+y^{2}) U +
\left\vert U\right\vert ^{2} U + i \Omega(x \partial_{y} - y \partial_{x}) U,
\quad(x,y) \in\mathbb{R}^{2}. \label{statGP}%
\end{equation}

Local bifurcations of small-amplitude vortex solutions in the GP equation
(\ref{Ec}) have been addressed recently in many publications. We refer to
these small-amplitude vortex solutions as the \emph{primary branches}.
Classification of localized (soliton and vortex) solutions from the triple
eigenvalue was constructed by Kapitula \emph{et al.} \cite{Kap1} with the
Lyapunov--Schmidt reduction method. Existence, stability, and bifurcations of
radially symmetric vortices with charge $m_{0}\in\mathbb{N}$ were studied by
Kollar and Pego \cite{Kollar} with shooting methods and Evans function
computations. Symmetries of nonlinear terms were used to continue families of
general vortex and dipole solutions from the linear limit by Contreras and
Garc\'{\i}a-Azpeitia \cite{CoGa15} by using equivariant degree theory
\cite{IzVi03} and bifurcation methods \cite{GaIz12}. Existence and stability of
stationary states were analyzed in \cite{Thomann,Hani} with the amplitude equations
for the Hermite function decompositions and their truncation at the continuous
resonant equation. Vortex dipoles were studied with normal form
equations and numerical approximations in \cite{GKC}. Numerical evidences of
existence, bifurcations, and stability of such vortex and dipole solutions can
be found in a vast literature \cite{21,34,38,PeKe11,57}.

Compared to the previous literature, our results will explore the recent
discovery of \cite{PeKe13} of how bifurcations of unconstrained and
constrained minimizers of energy are related to the spectral stability problem
of radially symmetric vortices in the small-amplitude limit, in particular,
with the eigenvalues of negative Krein signature which are known to
destabilize dynamics of vortices \cite{Kollar}. Therefore, we consider
bifurcations of \emph{secondary branches} of multi-vortex solutions from the
primary branch of the radially symmetric vortex of charge $m_{0}\in\mathbb{N}%
$. The primary branch is parameterized by only one parameter $\omega
:=\mu+m_{0}\Omega$ in the small-amplitude limit, whereas the secondary
branches of multi-vortex configurations are parameterized by two parameters
$\omega$ and $\Omega$.

As a particular example with $m_0 = 2$, we show that the asymmetric pair of two vortices
of charge one bifurcates from the radially symmetric vortex of charge two for $\Omega$ below
but near $\Omega_{0}=2$. Similarly to the symmetric charge-two vortex 
\cite{Kap1,Kollar}, the asymmetric pair of two charge-one vortices is born
unstable but it is more energetically favorable near the bifurcation threshold
compared to the charge-two vortex in the case of no rotation ($\Omega=0$). If
the charge-two vortex is a saddle point of the energy $E$ in (\ref{Energy})
with three pairs of negative eigenvalues for $\Omega=0$, it has only one pair
of negative eigenvalues for $\Omega$ below but near $\Omega_{0}=2$.

We note that the bifurcation technique developed here is not feasible by the
methods developed in \cite{Kap1} because of the infinitely many resonances at
$\Omega_{0} = 2$. Nevertheless, we show that these resonances are avoided for
$\Omega$ below but near $\Omega_{0} = 2$.

For a charge-one vortex, a similar bifurcation happens for $\Omega$ below but
near $\Omega_{0}=2$, which has been already described in \cite{PeKe13} in
other notations and with somewhat formal analysis. The results developed here
allows us to give a full justification of the results of \cite{PeKe13} for a
charge-one vortex, but also to extend the analysis to the charge-two vortex, 
as well as to a radially symmetric vortex of a general charge
$m_{0}\in\mathbb{N}$.

We also consider all other secondary bifurcations of the radially symmetric
vortices of charge $m_{0}\in\mathbb{N}$ when the frequency parameter $\Omega$
is increased from zero in the interval $(0,2)$. We show that each bifurcation
results in the disappearance of a single pair of negative eigenvalues in the
characterization of radially symmetric vortices as saddle points of the energy
$E$ in (\ref{Energy}).

As a particular example, we show that the symmetric charge-two vortex has a
bifurcation at $\Omega$ near $\Omega_{*} = 2/3$, where another secondary
branch bifurcates. The new branch contains three charge-one vortices at the
vertices of an equilateral triangle and a vortex of anti-charge one
at the center of symmetry. Again, the secondary branch inherits
instability of the radially symmetric vortex along the primary branch in the
small-amplitude limit. Past the bifurcation point, the radially symmetric
vortex of charge two has two pairs of negative eigenvalues. The bifurcation
result near $\Omega_{*} = 2/3$ was not obtained in the previous work
\cite{Kap1}.

In the case of the multi-vortex configurations of the total charge two, we can
conjecture that the local minimizers of energy given by the symmetric pair of
two charge-one vortices as in \cite{Navarro} can be found from a
\emph{tertiary bifurcation} along the secondary branch given by the asymmetric
pair of charge-one vortices. However, it becomes technically involved to
approximate the secondary branch near the bifurcation point and to find the
tertiary bifurcation point.

The following theorem represents the main result of our
paper. A schematic illustration is given on Figure \ref{fig-scheme}.

\begin{figure}[th]

\begin{center}
\resizebox{13.0cm}{!}{ \includegraphics{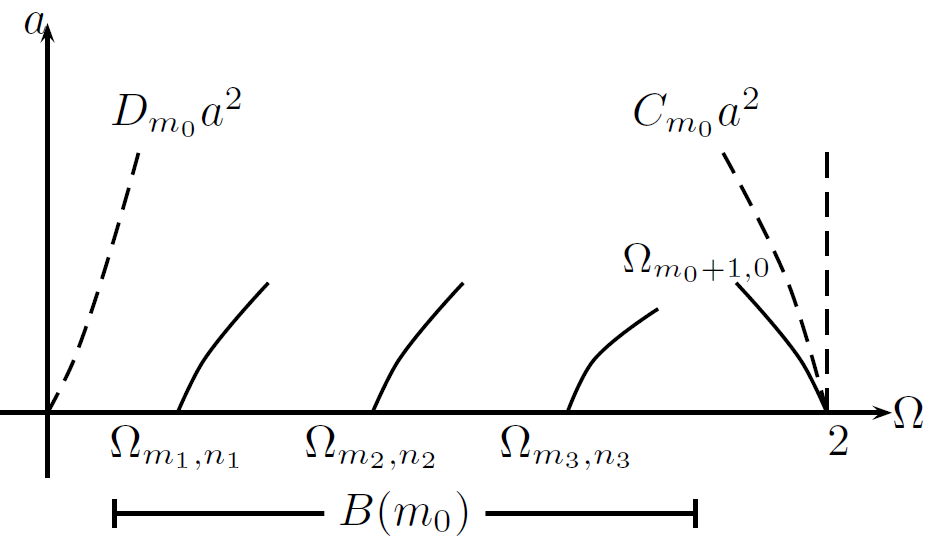}}
\end{center}

\caption{A schematic illustration of the bifurcation
curves in the parameter plane $(\Omega,a)$, where $a$ defines $\omega$.
The bifurcating solutions form
surfaces parameterized by $(\Omega,a)$ close to the curves $\Omega_{m,n}$.}%
\label{fig-scheme}%
\end{figure}

\begin{theorem}
\label{theorem-main} Fix an integer $m_{0}\in\mathbb{N}$ and denote
$\omega:=\mu+m_{0}\Omega$.

\begin{itemize}
\item[(i)] There exists a smooth family of radially symmetric vortices of
charge $m_{0}$ with a positive profile $U$ satisfying (\ref{statGP}) with
$\omega = \omega(a)$ given by
\[
\omega(a)=2(m_{0}+1) + \frac{(2m_{0})!}{4^{m_{0}} (m_{0}!)^{2}} a^{2}%
+\mathcal{O}(a^{4}),
\]
where the ``amplitude" $a$ parameterizes the family.

\item[(ii)] For $\Omega=0$ and small $a$, the vortices are degenerate saddle
points of the energy $E$ in (\ref{Energy}) with $2N(m_{0})$ negative
eigenvalues, a simple zero eigenvalue, and $2Z(m_{0})$ small eigenvalues of
order $\mathcal{O}(a^{2})$, where
\[
N(m_{0})=\frac{1}{2}m_{0}(m_{0}+1)\quad\mbox{\rm and}\quad Z(m_{0})=m_{0}.
\]

\item[(iii)] There exist $C_{m_{0}}>0$ and $D_{m_{0}}\geq0$ such that for
small $a$, $1+B(m_{0})$ global bifurcations occur when the parameter $\Omega$
is increased in the interval $[a^{2}D_{m_{0}},2-a^{2}C_{m_{0}}]$, where
\[
B(m_{0})=\frac{1}{2}m_{0}(m_{0}-1).
\]
For $\Omega\gtrsim a^{2}D_{m_{0}}$, the family of radially symmetric vortices
has only $2N(m_{0})$ negative eigenvalues and a simple zero eigenvalue, and it
losses two of these negative eigenvalues past each non-resonant bifurcation
point. If
$1\leq m_{0}\leq16$, the family has $2(m_{0}-1)$ negative eigenvalues for
$\Omega\gtrsim2-a^{2}C_{m_{0}}$.

\item[(iv)] A new smooth family of multi-vortex configurations  is connected
to the family of radially symmetric vortices on one side of each non-resonant
bifurcation point (of the pitchfork type). Furthermore, on the right
(respectively, left) side of the bifurcation point, the new family has one
more (respectively, one less) negative eigenvalue compared to the family of
radially symmetric vortices.

\item[(v)] For a non-resonant bifurcation point $\Omega_{m,n}\in(0,2)$ with $m > m_0$ and $n \geq 0$, the
new family has a polygon configuration of $(m-m_{0})$ charge-one vortices
surrounding a center with total charge $2m_{0}-m$.
For the \textquotedblleft last" bifurcation point $\Omega_{m_{0}%
+1,0}=2+\mathcal{O}(a^{2})$, the new family consists of the
charge-one asymmetric vortex ($m_0 = 1$), the asymmetric pair of charge-one
vortices ($m_0 = 2$), and a configuration of vortices near the center
of total charge $m_0$ ($m_0 \geq 3$).
\end{itemize}
\end{theorem}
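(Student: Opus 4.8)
The plan is to run a Lyapunov--Schmidt reduction near the resonant eigenvalue $\omega_{0}=2(m_{0}+1)$ of the two-dimensional harmonic oscillator $L_{0}:=-(\partial_{x}^{2}+\partial_{y}^{2})+(x^{2}+y^{2})$, and then feed the resulting spectral information into equivariant bifurcation theory for the symmetry group generated by the gauge action $u\mapsto e^{i\alpha}u$, the spatial rotations $u\mapsto u\circ\mathcal{R}_{\beta}$, and the antilinear reflection $u(x,y)\mapsto\overline{u(x,-y)}$. I would use throughout that $L_{0}$ has compact resolvent and that its eigenspace at $\omega_{0}$ is spanned by the Hermite--Gauss functions with angular index $\ell$ and radial index $n$ subject to $2n+|\ell|=m_{0}$. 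For part (i), substitute the radial ansatz $U=R(r)e^{im_{0}\theta}$ into \eqref{statGP}; after the rotation term is absorbed through $\omega=\mu+m_{0}\Omega$, what remains is a regular perturbation of $L_{0}R=\omega_{0}R$ on the $\ell=m_{0}$ fibre, so the implicit function theorem produces a smooth branch $R=a\,r^{m_{0}}e^{-r^{2}/2}+\mathcal{O}(a^{3})$ and $\omega=\omega_{0}+a^{2}\omega_{2}+\mathcal{O}(a^{4})$. The Fredholm solvability condition at order $a^{3}$ gives $\omega_{2}=\|U_{0}\|_{L^{4}}^{4}\,\|U_{0}\|_{L^{2}}^{-2}$ up to the normalization of $a$, and the Gaussian integrals $\int_{0}^{\infty}r^{2m_{0}+1}e^{-r^{2}}dr$ and $\int_{0}^{\infty}r^{4m_{0}+1}e^{-2r^{2}}dr$ evaluate this to $(2m_{0})!\,/\,(4^{m_{0}}(m_{0}!)^{2})$.

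For parts (ii)--(iii), I would write the Hessian operator $\mathcal{H}(a,\Omega)$ at the primary branch; after a co-rotating gauge change it commutes with spatial rotations and hence decomposes into angular blocks, each a $2\times2$ system of radial Schr\"odinger operators coupling the $(m_{0}+k)$- and $(m_{0}-k)$-sectors through the cubic term. At $a=0$, $\Omega=0$ every block is $L_{0}-\omega_{0}$, whose negative spectrum is spanned by the Hermite--Gauss modes with $2n+|\ell|<m_{0}$, of total multiplicity $\sum_{j=0}^{m_{0}-1}(j+1)=N(m_{0})$, yielding $2N(m_{0})$ negative eigenvalues, while the $(m_{0}+1)$-dimensional level $2n+|\ell|=m_{0}$ contributes the zero eigenvalue. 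Turning on $a>0$, the order-one negative eigenvalues persist by continuity, the gauge mode $iU$ stays exactly at zero, and a secondary Lyapunov--Schmidt reduction on the zero level --- with a reduced matrix quadratic in $a$ whose entries are $L^{2}$-pairings of triple products of Hermite--Gauss functions --- shows that the remaining $2Z(m_{0})=2m_{0}$ eigenvalues are of size $\mathcal{O}(a^{2})$ and, once $\Omega$ exceeds $a^{2}D_{m_{0}}$, all positive. For larger $\Omega$ the rotation shifts the mode-$p$ sector of each block by $-p\Omega$; writing this as the uniform shift $-m_{0}\Omega$ (already absorbed into $\omega$) plus an indefinite splitting of size $|m_{0}-p|\Omega$ between the two sectors, each order-one negative eigenvalue moves, to leading order affinely in $\Omega$, and crosses zero at some $\Omega=\Omega_{m,n}+\mathcal{O}(a^{2})$. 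I would then enumerate these crossings against the arithmetic $2n+|\ell|<m_{0}$: exactly $1+B(m_{0})=1+\tfrac12 m_{0}(m_{0}-1)$ of them lie in $(0,2)$, the last being the accumulation value $\Omega_{m_{0}+1,0}=2+\mathcal{O}(a^{2})$, so that $m_{0}(m_{0}+1)-2\bigl(1+B(m_{0})\bigr)=2(m_{0}-1)$ negative eigenvalues survive for $\Omega\gtrsim2-a^{2}C_{m_{0}}$; a crossing at which two conjugate pairs vanish simultaneously is the ``resonant'' case, which is excluded from the statement.

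For parts (iv)--(v): at a non-resonant $\Omega_{m,n}$ the kernel of the linearization of \eqref{statGP} at $U_{0}$ is one-dimensional modulo the symmetry orbit, spanned by a coupled Hermite--Gauss mode $\Phi$ with components of angular indices $m$ and $2m_{0}-m$ (radial parts fixed by $n$). The residual symmetry --- gauge combined with rotation by $2\pi/(m-m_{0})$, together with the antilinear reflection --- forces the reduced bifurcation equation to be odd, hence a pitchfork, and the standard Morse-index exchange across a pitchfork yields that the new branch carries one more (respectively one less) negative eigenvalue on the right (respectively left) side of the bifurcation point. To identify the configuration I would write the bifurcating solution as $U_{\varepsilon}=U_{0}+\varepsilon\,\Phi+\mathcal{O}(\varepsilon^{2})$ and analyse its zero set: the winding of $U_{\varepsilon}$ on large circles is a homotopy invariant, equal to $m_{0}$ along the connected branch, while in the transition region where $|U_{0}|$ balances $\varepsilon|\Phi|$ the phase condition $e^{i(m-m_{0})\theta}=\mathrm{const}$ produces exactly $m-m_{0}$ equally spaced simple zeros; a direct count using the zero structure of Hermite--Gauss functions shows these are the only zeros, so they are charge-one vortices on a regular $(m-m_{0})$-gon and the residual winding $2m_{0}-m$ sits at the center. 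When $m=m_{0}+1$ the polygon degenerates to a single off-center vortex, recovering the asymmetric vortex ($m_{0}=1$), the asymmetric charge-one pair ($m_{0}=2$), and a central cluster of total charge $m_{0}$ ($m_{0}\ge3$). Globality of each branch then follows, as in \cite{CoGa15,GaIz12,IzVi03}, from real analyticity of the reduced equation together with properness of the stationary problem in the confining trap.

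The step I expect to be the main obstacle is the behaviour near $\Omega=2$. This value is the accumulation point of infinitely many exact resonances of $L_{0}$ in the co-rotating frame (a Landau-level type degeneracy), so the naive reduction degenerates as $\Omega\to2^{-}$ and the methods of \cite{Kap1} do not apply there. The crux is to show that for $\Omega$ bounded below $2$ by an $\mathcal{O}(a^{2})$ margin all but finitely many of these resonances are split away; to enumerate the finitely many genuine zero crossings and their multiplicities uniformly in small $a$, so as to pin down the terminal bifurcation $\Omega_{m_{0}+1,0}=2+\mathcal{O}(a^{2})$ and the exact final Morse count $2(m_{0}-1)$; and to separate the non-resonant crossings, where the reduction is a clean pitchfork, from the resonant ones. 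It is precisely this uniform enumeration --- reducible to an explicit finite check on the zeros of certain combinations of Hermite polynomials --- that has been carried out only in the range $1\le m_{0}\le16$.
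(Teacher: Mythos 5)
Your outline follows the same route as the paper (Fourier-block decomposition of the Hessian into $2\times 2$ radial blocks coupling the angular indices $m$ and $m-2m_{0}$, arithmetic count of the crossings $\Omega_{m,n}$, pitchfork bifurcation forced by the residual symmetry, zero-set analysis from the Hermite--Gauss leading order plus degree preservation), but the step you defer as ``the main obstacle'' is not a peripheral verification --- it is the core of items (iii)--(v), and your proposal does not contain it. At $(a,\Omega)=(0,2)$ \emph{every} block $H_{m}$ with $m\geq m_{0}+1$ has a zero eigenvalue (double for $m_{0}+1\leq m\leq 2m_{0}$, simple for $m\geq 2m_{0}+1$), so one must carry out a two-parameter perturbation theory in $(a,\Omega)$: compute the reduced $2\times2$ matrices block by block, show that the block $m=m_{0}+1$ has a simple zero exactly at $\Omega_{m_{0}+1,0}(a)=2-\tfrac{(2m_{0})!}{4^{m_{0}}m_{0}!(m_{0}+1)!}a^{2}+\mathcal{O}(a^{4})$ while all other blocks are invertible there with inverse bounded only by $\mathcal{O}(a^{-2})$, and determine the signs of the $\mathcal{O}(a^{2})$ eigenvalues in the blocks $m_{0}+2\leq m\leq 2m_{0}$ (one positive and one negative precisely when $m_{0}\leq 16$ --- this is where the finite check lives and where the terminal count $2(m_{0}-1)$ comes from; your subtraction $m_{0}(m_{0}+1)-2(1+B(m_{0}))$ is only valid once one knows no other eigenvalue turns negative on the way, which is exactly what fails for $m_{0}\geq 17$). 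Moreover, the $\mathcal{O}(a^{-2})$ loss of invertibility forces a rescaling of the bifurcation ansatz, $v=a\bigl(b\,f_{m_{0}+1,0}+\mathcal{O}(b^{2})\bigr)$, without which the Crandall--Rabinowitz reduction at the last curve does not close; a consequence is that the vortex configuration at $\Omega_{m_{0}+1,0}$ cannot be read off the leading-order mode alone: for $m_{0}=2$ the splitting of the central double zero into the asymmetric pair is governed by a coefficient coming from the $\mathcal{O}(ab^{2})$ remainder, so your ``polygon degenerates to a single off-center vortex'' picture does not by itself establish item (v) for the last bifurcation.

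The second genuine gap is globality. The phrase ``real analyticity of the reduced equation together with properness of the stationary problem'' is not an argument: the reduced equation is purely local, and compactness/properness is precisely what degenerates as $\Omega\to 2$, where the eigenvalues $\mu_{m,n}^{\pm}$ accumulate and $(\mathcal{H}+cI)^{-1}$ ceases to be compact. The paper obtains the global statement by restricting the problem to the fixed-point space of the dihedral group $D_{m-m_{0}}$ --- which removes the gauge zero mode and renders the crossing eigenvalue simple, hence the crossing number odd --- and then applying the global Rabinowitz alternative, with compactness of $(\mathcal{H}+cI)^{-1}$ proved only for $|\Omega|<2$. You need this (or an equivalent degree-theoretic argument with the compactness actually verified) to justify the word ``global'' in item (iii). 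Finally, a smaller overclaim: ``a direct count \ldots shows these are the only zeros'' cannot hold globally, since the background profile decays at infinity; one can only control the zero set, and hence the charges, inside a fixed disk, which is all that is needed for the stated charge assignments.
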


\begin{remark}
By global bifurcation, we mean that the bifurcating family that originates
from the family of radially symmetric vortices of charge $m_{0}$ either
reaches the boundaries $\Omega=0$ or $\Omega=2$, diverges to infinity for a
value of $\Omega\in(0,2)$, or returns to another bifurcation point along the
family of radially symmetric vortices of charge $m_{0}$.
\end{remark}

\begin{remark}
For $1\leq m_{0}\leq3$, we have $D_{m_{0}}=0$ in item (iii), therefore, the
$1+B(m_{0})$ global bifurcations arise when $\Omega$ is increased from
$\Omega=0$ to $\Omega=2-a^{2}C_{m_{0}}$. However, we do not know if $D_{m_{0}}=0$ 
in a general case. If $D_{m_{0}}\neq0$, up to $Z(m_{0})$ additional
bifurcations may appear if $\Omega$ is increased from $\Omega=0$ to
$\Omega=a^{2}D_{m_{0}}$.
\end{remark}

\begin{remark}
For $1\leq m_{0}\leq3$, all bifurcation points are non-resonant in items
(iii)--(v). Resonant bifurcation points may exist in a general case for
$m_{0}\geq4$. In this case, the statements (i)-(iii) remain valid, but for
each bifurcation point of multiplicity $k$, the family of radially symmetric
vortices losses $2k$ negative eigenvalues past the bifurcation point. In the resonant case,
the statements (iv)--(v) require further estimates. However, these resonances are
unlikely to be present as the more likely scenario is that the multiple
eigenvalues at $a=0$ split into simple nonzero eigenvalues of order $\mathcal{O}(a^{2})$.
\end{remark}

\begin{remark}
For $m_{0} \geq4$, there are $R(m_{0})$ additional bifurcations near
$\Omega_{0} = 2$. For $4\leq m_{0}\leq16$, the additional bifurcation arise
past the last bifurcation point at $\Omega_{m_0+1,0}$. For
$m_{0}\geq17$, some of the $R(m_{0})$ bifurcations arise before the ``last"
bifurcation point. We have found numerically that $R(4) = R(5) = 1$, $R(6) =
2$, $R(7) = R(8) = 3$, etc.
\end{remark}

From a technical point of view, the proof of Theorem \ref{theorem-main} is developed by
using the equivariance of the bifurcation problem under the action of the
group $O(2)\times O(2)$. The global bifurcation result is proven by using the restriction
of the bifurcation problem to the fixed-point space of a dihedral group. This restriction
leads to a simple eigenvalue in the fixed-point space, which allows us to apply the global
Crandall--Rabinowitz result, see Theorem 3.4.1 of \cite{Ni2001}. This method is also helpful
to get additional information on the symmetries of the bifurcating solutions which is
essential to localize the distributions of zeros for the individual vortices in the
multi-vortex configurations.

The paper is structured as follows. In Section 2, we review eigenvalues of the
Schr\"{o}dinger operator for quantum harmonic oscillator and give definitions
for the primary and secondary branches of multi-vortex solutions. In Section
3, we analyze distribution of eigenvalues of the Hessian operators along the
primary branches at the secondary bifurcation points. In Section 4, we justify
bifurcations of the secondary branches at the non-resonant bifurcation points
by using bifurcation theorems. In Section 5, we study distribution of
individual vortices in the multi-vortex configurations along the secondary branches.

\section{Preliminaries}

We denote the space of square integrable functions on the plane by
$L^{2}(\mathbb{R}^{2})$ and the space of radially symmetric squared integrable
functions integrated with the weight $r$ by $L_{r}^{2}(\mathbb{R}^{+})$. We
also use the same notations for the $L^{2}$-based Sobolev spaces such as
$H^{2}(\mathbb{R}^{2})$ and $H_{r}^{2}(\mathbb{R}^{+})$. The
weighted subspace of $L^{2}$ with $\left\|  |\cdot|^{2} u \right\|  _{L^{2}} <
\infty$ are denoted by $L^{2,2}(\mathbb{R}^{2})$ and $L_{r}^{2,2}(\mathbb{R}^{+})$.

We distinguish notations for the two sets: $\mathbb{N} = \{1,2,3, \ldots\}$ and $\mathbb{N}_{0} = \{
0,1,2,3,\ldots\}$. Notation $b \lesssim a$ means that there is an $a$-independent constant $C$
such that $b \leq C a$ for all $a > 0$ sufficiently small.
If $X$ is a Banach space, notation $u = \mathcal{O}_{X}(a)$ means that $\| u \|_{X} \lesssim a$
for all $a > 0$ sufficiently small. Similarly, $\omega = \mathcal{O}(a)$ means that
$|\omega| \lesssim a$ for all $a > 0$ sufficiently small.

\subsection{Schr\"{o}dinger operator for quantum harmonic oscillator}

Recall the quantum harmonic oscillator with equal frequencies in the space of
two dimensions \cite{QM,Mo}. In polar coordinates on $\mathbb{R}^{2}$, the
energy levels of the quantum harmonic oscillator are given by eigenvalues of
the Schr\"{o}dinger operator $L$ written as
\begin{equation}
L:=-\Delta_{(r,\theta)}+r^{2}:\;\;H^{2}(\mathbb{R}^{2})\cap L^{2,2}%
(\mathbb{R}^{2})\rightarrow L^{2}(\mathbb{R}^{2}), \label{Schrodinger}%
\end{equation}
where $\Delta_{(r,\theta)}=\partial_{r}^{2}+r^{-1}\partial_{r}+r^{-2}%
\partial_{\theta}^{2}$. As is well-known \cite{QM,Mo}, the eigenvalues of $L$
are distributed equidistantly and can be enumerated by two indices
$m\in\mathbb{Z}$ for the angular dependence and $n\in\mathbb{N}_{0}$ for the
number of zeros of the eigenfunctions in the radial direction. To be more
precise, the eigenfunction $f_{m,n}$ for the eigenvalue $\lambda_{m,n}$ can be
written in the form
\[
f_{m,n}(r,\theta)=e_{m,n}(r)e^{im\theta},\quad m\in\mathbb{Z},\quad
n\in\mathbb{N}_{0},
\]
where $e_{m,n}$ is an $L_{r}^{2}(\mathbb{R}^{+})$-normalized solution of the
differential equation
\begin{equation}
\left(  -\Delta_{m}+r^{2}\right)  e_{m,n}(r)=\lambda_{m,n}e_{m,n}%
(r),\quad\Delta_{m}:=\partial_{r}^{2}+r^{-1}\partial_{r}-r^{-2}m^{2}
\label{def.vmn}%
\end{equation}
with $n$ zeros on $\mathbb{R}^{+}$ and the eigenvalue $\lambda_{m,n}$ is given
explicitly as
\begin{equation}
\lambda_{m,n}=2(\left\vert m\right\vert +2n+1),\quad m\in\mathbb{Z},\quad
n\in\mathbb{N}_{0}. \label{eig-Schr}%
\end{equation}
In particular, $\lambda_{0,0}=2$ is simple, $\lambda_{1,0}=\lambda_{-1,0}=4$
is double, $\lambda_{2,0}=\lambda_{-2,0}=\lambda_{0,1}=6$ is triple, and so
on. For fixed $m\in\mathbb{Z}$, the spacing between the eigenvalues is $4$.
Multiplicity of an eigenvalue $\lambda=2\ell$ for $\ell\in\mathbb{N}$ is
$\ell$.

\subsection{Primary branches of radially symmetric vortices}

Stationary solutions of the GP equation (\ref{Ec}) are given in the form
$u(t,x,y)=e^{-i\mu t}U(x,y)$, where $U$ satisfies (\ref{statGP}) and
$\mu\in\mathbb{R}$ is a free parameter
which has the physical meaning of the chemical potential. In polar coordinates
$(r,\theta)$, $U$ satisfies the stationary GP equation in the form
\begin{equation}
\mu U=-\Delta_{(r,\theta)}U+r^{2}U+|U|^{2}U+i\Omega\partial_{\theta}U.
\label{EcStat}%
\end{equation}
Radially symmetric vortices of a fixed charge $m_{0}\in\mathbb{N}$ are given
in the form
\begin{equation}
U(r,\theta)=e^{im_{0}\theta}\psi_{m_{0}}(r),\quad\omega=\mu+m_{0}\Omega,
\label{vortex}%
\end{equation}
where $(\psi_{m_{0}},\omega)$ is a root of the nonlinear operator
\begin{equation}
f(u,\omega):\quad H_{r}^{2}(\mathbb{R}^{+})\cap L_{r}^{2,2}(\mathbb{R}%
^{+})\times\mathbb{R}\rightarrow L_{r}^{2}(\mathbb{R}^{+}),
\end{equation}
given by $f(u,\omega):=-\Delta_{m_{0}}u+r^{2}u+u^{3} -\omega u$.

By Theorem 1 in \cite{CoGa15}, for every $m_{0} \in\mathbb{N}$, there exists a
unique smooth family of radially symmetric vortices of charge $m_{0}$
parameterized locally by amplitude $a$ such that
\begin{equation}
\psi_{m_{0}}(r) \equiv\psi_{m_{0}}(r;a) = a e_{m_{0},0}(r) + \mathcal{O}%
_{H^{1}_{r}}(a^{3}) \label{ua}%
\end{equation}
and
\begin{equation}
\label{branch-1}\omega\equiv\omega_{m_{0}}(a) = \lambda_{m_{0},0} + a^{2}
\omega_{m_{0},0} + \mathcal{O}(a^{4}),
\end{equation}
where $\lambda_{m_{0},0} = 2(m_{0}+1)$, $\omega_{m_{0},0} = \| e_{m_{0},0}
\|_{L^{4}_{r}}^{4}$, and the normalization $\| e_{m_{0},0} \|_{L^{2}_{r}} = 1$
has been used. By using the explicit expression for the $L^{2}_{r}%
(\mathbb{R}^{+})$-normalized Hermite--Gauss solutions of the Schr\"{o}dinger
equation (\ref{def.vmn}) with $\lambda_{m_{0},0} = 2(m_{0}+1)$ given by
\begin{equation}
\label{HGfunctions}e_{m_{0},0}(r) = \frac{\sqrt{2}}{\sqrt{m_{0}!}} r^{m_{0}}
e^{-\frac{r^{2}}{2}}, \quad m_{0} \in\mathbb{N}_{0},
\end{equation}
we compute explicitly
\begin{equation}
\label{HGvalues}\omega_{m_{0},0} = \| e_{m_{0},0}\|_{L^{4}_{r}}^{4} =
\frac{(2m_{0})!}{4^{m_{0}} (m_{0}!)^{2}}.
\end{equation}

Since $e_{m_{0},0}(r) > 0$ for all $r > 0$, the property $\psi_{m_{0}}(r;a) >
0$, $r > 0$ holds\footnote{More general
vortex families with $n_{0}$ zeros on $\mathbb{R}^{+}$ have also been
constructed in \cite{CoGa15}, but our work will focus on the case $n_{0} =
0$.} at least for sufficiently small $a$. The family of radially symmetric vortices approximated by (\ref{ua}) and
(\ref{branch-1}) in the small-amplitude limit is referred to as \emph{the
primary branch}.

\begin{remark}
Item (i) in Theorem \ref{theorem-main} is just a reformulation of the result
of Theorem 1 in \cite{CoGa15}.
\end{remark}

Every solution $U$ of the stationary GP equation (\ref{EcStat})
is a critical point of the energy functional
\begin{equation}
E_{\mu}(u)=E(u)-\mu Q(u), \label{Lyapunov}%
\end{equation}
where $E(u)$ is given by (\ref{Energy}) and $Q(u)=\Vert u\Vert_{L^{2}}^{2}$.
Expanding $E_{\mu}(u)$ near the critical point $U$ given by (\ref{vortex})
with $u=U+v$, where $v$ is a perturbation term in $H^{1}(\mathbb{R}^{2})\cap
L^{2,1}(\mathbb{R}^{2})$, we obtain the quadratic form at the leading order
\[
E_{\mu}(U+v)-E_{\mu}(U)=\langle\mathcal{H}\mathbf{v},\mathbf{v}\rangle_{L^{2}%
}+\mathcal{O}(\Vert\mathbf{v}\Vert_{H^{1}\cap L^{2,1}}^{3}),
\]
where the bold notation $\mathbf{v}$ is used for an augmented vector with
components $v$ and $\bar{v}$ and the Hessian operator $\mathcal{H}$ can be
defined in the stronger sense as the linear operator
\begin{equation}
\mathcal{H}:\quad H^{2}(\mathbb{R}^{2})\cap L^{2,2}(\mathbb{R}^{2})\rightarrow
L^{2}(\mathbb{R}^{2}), \label{Hessian-definition}%
\end{equation}
with
\begin{equation}
\mathcal{H}=\left[
\begin{array}
[c]{cc}%
-\Delta_{(r,\theta)}+r^{2}+i\Omega\partial_{\theta}-\mu+2\psi_{m_{0}}^{2} &
\psi_{m_{0}}^{2}e^{2im_{0}\theta}\\
\psi_{m_{0}}^{2}e^{-2im_{0}\theta} & -\Delta_{(r,\theta)}+r^{2}-i\Omega
\partial_{\theta}-\mu+2\psi_{m_{0}}^{2}%
\end{array}
\right]  . \label{Hessian}%
\end{equation}

By using the Fourier series
\begin{equation}
\label{Fourier-series}v = \sum_{m \in\mathbb{Z}} V_{m} e^{i m \theta},
\quad\bar{v} = \sum_{m \in\mathbb{Z}} W_{m} e^{i m \theta},
\end{equation}
the operator $\mathcal{H}$ is block diagonalized into blocks $H_{m}$ that acts
on $V_{m}$ and $W_{m-2m_{0}}$ for $m \in\mathbb{Z}$. We recall that
$\psi_{m_{0}} = \psi_{m_{0}}(\cdot;a)$, $\omega= \mu+ m_{0} \Omega=
\omega_{m_{0}}(a)$, and write the blocks $H_{m}$ as linear operators
\begin{equation}
\label{Hessian-blocks-definition}H_{m} : \quad H^{2}_{r}(\mathbb{R}^{+}) \cap
L^{2,2}_{r}(\mathbb{R}^{+}) \to L^{2}_{r}(\mathbb{R}^{+}),
\end{equation}
with explicit dependence on the parameters $(a,\Omega)$ as follows:
\begin{equation}
\label{Hessian-blocks}H_{m}(a,\Omega) = K_{m}(a) - \Omega(m-m_{0}) R,
\end{equation}
where
\begin{equation*}
\label{operator-K}K_{m}(a) = \left[
\begin{array}
[c]{cc}%
- \Delta_{m} + r^{2} - \omega_{m_{0}}(a) + 2 \psi^{2}_{m_{0}}(r;a) &
\psi_{m_{0}}^{2}(r;a)\\
\psi_{m_{0}}^{2}(r;a) & - \Delta_{m-2m_{0}} + r^{2} - \omega_{m_{0}}(a) + 2
\psi^{2}_{m_{0}}(r;a)
\end{array}
\right]
\end{equation*}
and
\[
R = \left[
\begin{array}
[c]{cc}%
1 & 0\\
0 & -1
\end{array}
\right]  .
\]

\emph{A secondary bifurcation} along the primary branch of radially symmetric
vortices given by (\ref{ua}) corresponds to the nonzero solutions in
$H^{2}_{r}(\mathbb{R}^{+}) \cap L^{2,2}_r(\mathbb{R}^+)$ of the spectral problem
\begin{equation}
\label{bif-prob}K_{m}(a) \left[
\begin{array}
[c]{l}%
V_{m}\\
W_{m-2m_{0}}%
\end{array}
\right]  = \Omega(m-m_{0}) R \left[
\begin{array}
[c]{l}%
V_{m}\\
W_{m-2m_{0}}%
\end{array}
\right]  .
\end{equation}
This spectral problem (\ref{bif-prob}) coincides with the stability problem for the primary
branch (\ref{ua}) in the absence of rotation. The spectral parameter $\lambda$
of the stability problem\footnote{When the vortex is unstable, a complex
eigenvalue $\lambda$ of the stability problem does not correspond to the
secondary bifurcation associated with the eigenvalue problem (\ref{bif-prob}%
).} is given for each $m \in\mathbb{Z}$ by $\lambda:= \Omega(m-m_{0})$. The
parameter $m$ for the angular mode satisfying the eigenvalue problem
(\ref{bif-prob}) corresponds to the bifurcating mode superposed on the primary
branch of vortex solutions.

\subsection{Secondary branches of multi-vortex solutions}

We can look for the secondary branches bifurcating along the primary branch of
radially symmetric vortices given by (\ref{ua}) and (\ref{branch-1}).
Consequently, we write
\begin{equation}
U(r,\theta) = e^{i m_{0} \theta} \psi_{m_{0}}(r;a) + v(r,\theta), \label{bif}%
\end{equation}
where $v$ is a root of the nonlinear operator
\begin{equation}
\label{g-function}g(v;a,\Omega) : \quad H^{2}(\mathbb{R}^{2}) \cap
L^{2,2}(\mathbb{R}^{2}) \times\mathbb{R} \times\mathbb{R} \to L^{2}%
(\mathbb{R}^{2}),
\end{equation}
given by
\begin{align}
g(v;a,\Omega)  &  = - \Delta_{(r,\theta)} v + r^{2} v + i
\Omega\left(  \partial_{\theta} v - i m_{0} v \right)  + 2 \psi_{m_{0}}%
^{2}(r;a) v + e^{2i m_{0} \theta} \psi_{m_{0}}^{2}(r;a) \bar{v}\nonumber\\
&  \phantom{t} + e^{-i m_{0} \theta} \psi_{m_{0}}(r;a) v^{2} + 2 e^{i m_{0}
\theta} \psi_{m_{0}}(r;a) |v|^{2} + |v|^{2} v -\omega_{m_{0}}(a) v. \label{operator-g}%
\end{align}
The Jacobian operator of $g(v;a,\Omega)$ at $v = 0$ is given by the Hessian
operator (\ref{Hessian}), which is block-diagonalized by the Fourier series
(\ref{Fourier-series}) into blocks (\ref{Hessian-blocks-definition})--(\ref{Hessian-blocks}).

In the next two lemmas, we analyze symmetries of the individual blocks of the
spectral problem (\ref{bif-prob}).

\begin{lemma}
\label{lemma-1} There exists $a_{0}$ such that for every $0 < a < a_{0}$, the
spectrum of $H_{m_{0}}(a,\Omega) = K_{m_{0}}(a)$ is strictly positive except
for a simple zero eigenvalue, which is related to the gauge symmetry spanned
by the eigenvector
\begin{equation}
\label{kernel}K_{m_{0}}(a)\left[
\begin{array}
[c]{l}%
\psi_{m_{0}}\\
-\psi_{m_{0}}%
\end{array}
\right]  = \left[
\begin{array}
[c]{l}%
0\\
0
\end{array}
\right]  .
\end{equation}
Consequently, no bifurcations arise in the $\Omega$ continuation from the
block $H_{m_{0}}(a,\Omega) = K_{m_{0}}(a)$.
\end{lemma}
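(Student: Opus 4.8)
The plan is to analyze the operator $K_{m_0}(a)$ as a perturbation, in the small-amplitude parameter $a$, of the decoupled operator $K_{m_0}(0) = \mathrm{diag}(-\Delta_{m_0} + r^2 - \lambda_{m_0,0},\ -\Delta_{-m_0} + r^2 - \lambda_{m_0,0})$ acting on $L^2_r(\mathbb{R}^+)\oplus L^2_r(\mathbb{R}^+)$. First I would record the spectrum at $a=0$: since $\lambda_{m_0,0}=2(m_0+1)$ and the eigenvalues of $-\Delta_m + r^2$ are $\lambda_{m,n}=2(|m|+2n+1)$, the first diagonal entry $-\Delta_{m_0}+r^2-\lambda_{m_0,0}$ has a simple kernel spanned by $e_{m_0,0}$ and strictly positive spectrum otherwise (the next eigenvalue being $\lambda_{m_0,1}-\lambda_{m_0,0}=4$); the second diagonal entry $-\Delta_{-m_0}+r^2-\lambda_{m_0,0}=-\Delta_{m_0}+r^2-\lambda_{m_0,0}$ is the same operator, so it also has a simple kernel spanned by $e_{m_0,0}$. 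Thus $K_{m_0}(0)$ has a two-dimensional kernel spanned by $(e_{m_0,0},0)^T$ and $(0,e_{m_0,0})^T$, with the rest of the spectrum bounded below by $4>0$. The essential spectrum is empty since the potential $r^2\to\infty$, so the spectrum is discrete; the zero eigenvalue has multiplicity exactly two at $a=0$.

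Next I would use analytic (Kato/Rellich) perturbation theory to track the two small eigenvalues for $a>0$. The operator $K_{m_0}(a)$ depends smoothly (indeed analytically) on $a^2$ through $\omega_{m_0}(a)$ and $\psi_{m_0}^2(\cdot;a)=a^2 e_{m_0,0}^2 + \mathcal{O}(a^4)$; note the off-diagonal entry $\psi_{m_0}^2$ is $\mathcal{O}(a^2)$. The spectral gap at $a=0$ guarantees that for small $a$ exactly two eigenvalues (counted with multiplicity) remain near $0$ while the rest stay above, say, $3$. To locate these two eigenvalues I would set up the $2\times2$ reduced matrix in the basis $\{(e_{m_0,0},0)^T,(0,e_{m_0,0})^T\}$: at order $a^2$, the perturbation of $K_{m_0}(a)$ is
\[
a^2\!\left[\begin{array}{cc} 2e_{m_0,0}^2 - \omega_{m_0,0} & e_{m_0,0}^2 \\ e_{m_0,0}^2 & 2e_{m_0,0}^2 - \omega_{m_0,0}\end{array}\right] + \mathcal{O}(a^4),
\]
so the reduced matrix has entries obtained by pairing with $e_{m_0,0}$ against the $L^2_r$-inner product. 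Using $\omega_{m_0,0}=\|e_{m_0,0}\|_{L^4_r}^4=\langle e_{m_0,0}^2, e_{m_0,0}^2\rangle$, the diagonal entries equal $a^2(2\omega_{m_0,0}-\omega_{m_0,0})=a^2\omega_{m_0,0}$ and the off-diagonal entries equal $a^2\omega_{m_0,0}$, up to $\mathcal{O}(a^4)$. The eigenvalues of this reduced matrix are therefore $2a^2\omega_{m_0,0}+\mathcal{O}(a^4)$ and $0$ exactly — the exact zero being forced by the gauge symmetry, not by the first-order computation. This gives one strictly positive small eigenvalue $\approx 2a^2\omega_{m_0,0}>0$ and one eigenvalue that must be checked to be identically zero.

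The key structural point is then to verify that the second small eigenvalue is \emph{exactly} zero for all small $a$, not merely zero to leading order. For this I would invoke the gauge (phase rotation) symmetry $u\mapsto e^{i\alpha}u$ of the stationary GP equation: differentiating the identity $g(0;a,\Omega)=0$ — equivalently the radial equation $f(\psi_{m_0},\omega_{m_0}(a))=0$ — with respect to the gauge parameter shows that $(\psi_{m_0},-\psi_{m_0})^T$ lies in the kernel of $K_{m_0}(a)$ for every $a$; this is precisely the identity \eqref{kernel}, which I would prove by a direct substitution using $f(\psi_{m_0},\omega)=0$ (the diagonal entry applied to $\psi_{m_0}$ gives $-\Delta_{m_0}\psi_{m_0}+r^2\psi_{m_0}+2\psi_{m_0}^3-\omega\psi_{m_0}=\psi_{m_0}^3$ by the stationary equation, and the off-diagonal entry contributes $-\psi_{m_0}^3$, cancelling; similarly for the second row since $\Delta_{-m_0}=\Delta_{m_0}$). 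Since the reduced $2\times2$ matrix must have $(1,-1)^T$ in its kernel to all orders, its eigenvalues are exactly $0$ and (after the $a^2$ computation) a branch that is strictly positive for small $a>0$. Combining this with the perturbation-theoretic gap estimate — all other eigenvalues of $K_{m_0}(a)$ stay near the eigenvalues of $K_{m_0}(0)$, which are $\geq 4$, hence remain $\geq 3>0$ for small $a$ — yields that $\mathrm{spec}(K_{m_0}(a))$ is strictly positive apart from the simple zero eigenvalue spanned by $(\psi_{m_0},-\psi_{m_0})^T$.

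Finally, for the last sentence: the block $H_{m_0}(a,\Omega) = K_{m_0}(a) - \Omega(m_0-m_0)R = K_{m_0}(a)$ is independent of $\Omega$, so its spectrum — and in particular its kernel — does not change as $\Omega$ varies, and the zero eigenvalue, being forced by gauge symmetry, persists as the trivial solution of \eqref{bif-prob}; hence no genuine bifurcation of new branches arises from this block. The main obstacle, and the step requiring the most care, is establishing that the second small eigenvalue is \emph{identically} zero rather than merely small; the clean way around this is to exhibit the exact kernel vector \eqref{kernel} directly from the stationary equation and conclude that the reduced matrix is exactly singular, so only the perturbation-theoretic separation of the remaining spectrum and the sign of the one nonzero small eigenvalue need quantitative control.
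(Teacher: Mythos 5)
Your proof is correct, but it follows a genuinely different route from the paper. The paper's own proof is essentially a citation: since $H_{m_0}(a,\Omega)=K_{m_0}(a)$ is $\Omega$-independent and the profile satisfies $\psi_{m_0}(r;a)>0$ for $r>0$ (the case $n_0=0$), the assertion on the spectrum is taken from the earlier works of Chugunova--Pelinovsky and Kollar--Pego, where positivity of the profile is the structural input. You instead give a self-contained small-$a$ argument: at $a=0$ both diagonal entries of $K_{m_0}(0)$ equal $-\Delta_{m_0}+r^2-\lambda_{m_0,0}$ (using $\Delta_{-m_0}=\Delta_{m_0}$), so zero is a double eigenvalue with the rest of the spectrum at distance $4$; the $\mathcal{O}(a^2)$ bounded perturbation splits the doublet according to the reduced matrix $a^2\omega_{m_0,0}\left[\begin{smallmatrix}1&1\\1&1\end{smallmatrix}\right]$, giving eigenvalues $2a^2\omega_{m_0,0}+\mathcal{O}(a^4)>0$ and $\mathcal{O}(a^4)$; and the latter is pinned exactly at zero by the explicit gauge kernel $(\psi_{m_0},-\psi_{m_0})$, which you verify directly from $f(\psi_{m_0},\omega)=0$, exactly as in \eqref{kernel}. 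One small caveat: the phrase that the reduced matrix ``must have $(1,-1)^T$ in its kernel to all orders'' is loose --- the clean statement is the one you actually use, namely that the full operator has an exact kernel vector, so the small eigenvalue that is $\mathcal{O}(a^4)$ by perturbation theory must coincide with this exact zero once the other small eigenvalue is shown to be strictly positive. Comparing the two approaches: yours is elementary, self-contained, and yields the quantitative size $2a^2\Vert e_{m_0,0}\Vert_{L^4_r}^4$ of the nonzero small eigenvalue, but it is intrinsically restricted to the small-amplitude regime; the paper's positivity-based argument inherits from \cite{ChPel,Kollar} a structural (Perron--Frobenius/Sturm-type) mechanism tied to $\psi_{m_0}>0$, which is the natural route if one wants the statement wherever the profile stays positive rather than only near the linear limit.
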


\begin{proof}
For $m = m_{0}$, $H_{m_{0}}(a,\Omega) = K_{m_{0}}(a)$ is independent of the
rotation frequency $\Omega$. If the primary branch (\ref{ua}) describes
vortices with $\psi_{m_{0}}(r;a) > 0$ for all $r > 0$, that is, if $n_{0} =
0$, then the assertion on the spectrum of $K_{m_{0}}(a)$ for small $a$ follows
from the previous works \cite{ChPel,Kollar}.
\end{proof}

\begin{lemma}
\label{lemma-2} Eigenvalues of the spectral problem (\ref{bif-prob}) with $m <
m_{0}$ are identical to eigenvalues of the spectral problem (\ref{bif-prob})
for $m > m_{0}$.
\end{lemma}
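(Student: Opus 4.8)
The plan is to exploit the fact that the radial operator $\Delta_{p}=\partial_{r}^{2}+r^{-1}\partial_{r}-r^{-2}p^{2}$ depends on $p$ only through $p^{2}$, so that $\Delta_{-p}=\Delta_{p}$, together with the reflection $m\mapsto 2m_{0}-m$, which interchanges the two diagonal entries of $K_{m}(a)$. This reflection is an involution on $\mathbb{Z}$ mapping $\{m\in\mathbb{Z}:m<m_{0}\}$ bijectively onto $\{m\in\mathbb{Z}:m>m_{0}\}$, so it is enough to show that, for every $m\neq m_{0}$, the spectral problem (\ref{bif-prob}) at index $m$ has a nontrivial solution in $H^{2}_{r}(\mathbb{R}^{+})\cap L^{2,2}_{r}(\mathbb{R}^{+})$ for a given $\Omega$ if and only if the same is true for index $2m_{0}-m$.

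First I would introduce the swap matrix $S=\left[\begin{smallmatrix}0&1\\1&0\end{smallmatrix}\right]$, which satisfies $S^{2}=I$ and $SRS=-R$. Since $(2m_{0}-m)^{2}=(m-2m_{0})^{2}$ and $((2m_{0}-m)-2m_{0})^{2}=m^{2}$, comparing the two $2\times2$ operator matrices entry by entry (the off-diagonal entries of $K_m(a)$ being both equal to $\psi_{m_0}^2(r;a)$) gives the conjugation identity $K_{2m_{0}-m}(a)=S\,K_{m}(a)\,S$. Moreover, since the exponents $|m-2m_{0}|=|2m_{0}-m|$ and $|-m|=|m|$ that control the behaviour at $r=0$ are preserved, $S$ is a bijection from the natural domain of $K_{m}(a)$ onto that of $K_{2m_{0}-m}(a)$, interchanging the roles of the two components $V_{m}$ and $W_{m-2m_{0}}$ without any loss of radial regularity.

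Next I would combine these facts. Writing (\ref{bif-prob}) at index $m$ as $K_{m}(a)\mathbf{v}=\Omega(m-m_{0})R\mathbf{v}$, multiplying on the left by $S$, and inserting $S^{2}=I$ in front of $K_{m}(a)$ and of $R$, I obtain $K_{2m_{0}-m}(a)(S\mathbf{v})=-\Omega(m-m_{0})R(S\mathbf{v})=\Omega\bigl((2m_{0}-m)-m_{0}\bigr)R(S\mathbf{v})$, which is precisely (\ref{bif-prob}) at index $2m_{0}-m$ with the same value of $\Omega$ and the nontrivial solution $S\mathbf{v}$. Since $S^{-1}=S$, this correspondence is a bijection on the solution spaces, so the two problems have nontrivial solutions (of equal multiplicity) for exactly the same values of $\Omega$; taking unions over $m<m_{0}$ and applying the reflection $m\mapsto 2m_{0}-m$ then yields the statement. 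I do not expect a genuine obstacle here: the only points needing care are the bookkeeping of the radial function spaces under $S$ (so that the reflected object is again an admissible eigenfunction) and the two cancelling sign changes, one from $SRS=-R$ and one from $m-m_{0}\mapsto-(m-m_{0})$, which together leave $\Omega$ invariant.
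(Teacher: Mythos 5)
Your proof is correct and is essentially the paper's argument: both rest on $\Delta_{p}=\Delta_{-p}$ and the reflection $m\mapsto 2m_{0}-m$, with your conjugation by the swap matrix $S$ being exactly the paper's component interchange $[V_{m_{0}-k},W_{-m_{0}-k}]=[W_{-m_{0}+k},V_{m_{0}+k}]$, the two sign changes from $SRS=-R$ and $m-m_{0}\mapsto-(m-m_{0})$ cancelling just as in the paper. The matrix-conjugation phrasing is merely a cleaner packaging of the same computation.
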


\begin{proof}
We observe the symmetry $\Delta_{m} = \Delta_{m_{0} + (m - m_{0})}$ and
$\Delta_{m-2m_{0}} = \Delta_{m_{0} - (m-m_{0})}$ with respect to the symmetry
point at $m = m_{0}$. As a result, for each $k \in\mathbb{N}$, if $\lambda=
\Omega k$ is an eigenvalue of the spectral problem (\ref{bif-prob}) with $m =
m_{0} + k$ for the eigenvector $[V_{m_{0}+k},W_{-m_{0}+k}]$, then $\lambda=
\Omega k$ is the same eigenvalue of the spectral problem (\ref{bif-prob}) with
$m = m_{0} - k$ for the eigenvector $[V_{m_{0}-k},W_{-m_{0}-k}] =
[W_{-m_{0}+k},V_{m_{0}+k}]$.
\end{proof}

\vspace{0.25cm}

It follows from Lemmas \ref{lemma-1} and \ref{lemma-2} that it is sufficient
to consider the spectrum of $H_{m}(a,\Omega)$ for $m > m_{0}$ and to count
negative and zero eigenvalues of $H_{m}(a,\Omega)$ in pairs. If $a=0$ and
$\Omega=0$, we have $H_{m}(0,0)=K_{m}(0)$, where
\begin{equation}
K_{m}(0)=\left[
\begin{array}
[c]{cc}%
-\Delta_{m}+r^{2}-\lambda_{m_{0},0} & 0\\
0 & -\Delta_{m-2m_{0}}+r^{2}-\lambda_{m_{0},0}%
\end{array}
\right]  . \label{bifurcations-K}%
\end{equation}
The spectrum of $K_{m}(0)$ is obtained from eigenvalues of the Schr\"{o}dinger
equation (\ref{def.vmn}). The first diagonal entry of
$K_{m}(0)$ has strictly positive eigenvalues
\[
\mu_{m,n}^{+}(0):=2(m+2n-m_{0})>0,\quad m>m_{0},\quad n\in\mathbb{N}_{0}.
\]
The second diagonal entry of $K_{m}(0)$ has eigenvalues
\[
\mu_{m,n}^{-}(0):=2(|m-2m_{0}|+2n-m_{0}),\quad m>m_{0},\quad n\in
\mathbb{N}_{0}.
\]
Let $N(m_{0})$ and $Z(m_{0})$ be the cardinality of the sets
\[
\mathcal{N}(m_{0})=\left\{  m>m_{0},\quad n\in\mathbb{N}_{0}:\quad\mu
_{m,n}^{-}(0)<0\right\}
\]
and
\[
\mathcal{Z}(m_{0})=\left\{  m>m_{0},\quad n\in\mathbb{N}_{0}:\quad\mu
_{m,n}^{-}(0)=0\right\}  .
\]
The following lemma gives the count of $N(m_{0})$ and $Z(m_{0})$.

\begin{lemma}
\label{lemma-count-N-Z} For every $m_{0} \in\mathbb{N}$, we have
\begin{equation}
\label{count-N-Z}N(m_{0}) = \frac{m_{0}(m_{0}+1)}{2}, \quad Z(m_{0}) = m_{0}.
\end{equation}

\end{lemma}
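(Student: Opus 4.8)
The plan is to count the lattice points $(m,n)$ with $m \in \mathbb{Z}$, $m > m_0$, $n \in \mathbb{N}_0$ satisfying the inequality $\mu_{m,n}^{-}(0) = 2(|m-2m_0| + 2n - m_0) < 0$, i.e.\ $|m - 2m_0| + 2n < m_0$, and separately the equality $|m-2m_0| + 2n = m_0$. First I would split the range $m > m_0$ into two regimes according to the sign of $m - 2m_0$. For $m_0 < m \leq 2m_0$ we have $|m-2m_0| = 2m_0 - m$, so the condition $\mu^{-}_{m,n}(0) < 0$ becomes $2m_0 - m + 2n < m_0$, i.e.\ $m > m_0 + 2n$; combined with $m \leq 2m_0$ this gives, for each fixed $n$, the values $m \in \{m_0 + 2n + 1, \dots, 2m_0\}$, which is a nonempty set precisely when $2n < m_0$, and then it contributes $m_0 - 2n$ values of $m$. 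For $m > 2m_0$ we have $|m - 2m_0| = m - 2m_0$ and the condition becomes $m - 2m_0 + 2n < m_0$, i.e.\ $m < 3m_0 - 2n$; combined with $m > 2m_0$ this yields $m \in \{2m_0 + 1, \dots, 3m_0 - 2n - 1\}$, nonempty iff $2n < m_0 - 1$, contributing $m_0 - 2n - 1$ values.

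Summing the two contributions over all admissible $n$ gives
\[
N(m_0) = \sum_{n \geq 0,\ 2n < m_0} (m_0 - 2n) + \sum_{n \geq 0,\ 2n < m_0 - 1} (m_0 - 2n - 1),
\]
which is a finite sum over $n = 0, 1, \dots, \lfloor (m_0-1)/2 \rfloor$ (for the first sum, $n$ up to $\lfloor (m_0-1)/2 \rfloor$ as well, since $2n < m_0 \iff 2n \le m_0 - 1$ for integers). Writing out the terms, the combined summand for each $n$ in the overlapping range is $(m_0 - 2n) + (m_0 - 2n - 1) = 2m_0 - 4n - 1$, with a careful treatment of the boundary index when $m_0$ is even versus odd (when $m_0$ is even, the largest $n$ with $2n < m_0$ is $n = m_0/2 - 1$ and it still satisfies $2n < m_0 - 1$, so both sums have the same index set; when $m_0$ is odd, $n$ ranges over $0, \dots, (m_0-1)/2$ in the first sum and $0, \dots, (m_0-3)/2$ in the second). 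In either parity case one evaluates the arithmetic sum and obtains $N(m_0) = 1 + 2 + \cdots + m_0 = m_0(m_0+1)/2$; this can be double-checked directly for small $m_0$, e.g.\ $N(1) = 1$, $N(2) = 3$, $N(3) = 6$.

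For $Z(m_0)$ the condition is $|m - 2m_0| + 2n = m_0$. In the regime $m_0 < m \leq 2m_0$ this is $2m_0 - m + 2n = m_0$, i.e.\ $m = m_0 + 2n$, which lies in $(m_0, 2m_0]$ iff $1 \leq 2n \leq m_0$, i.e.\ $n \in \{1, \dots, \lfloor m_0/2 \rfloor\}$ when $m_0 \geq 2$ — but note $m = m_0 + 2n > m_0$ forces $n \geq 1$. In the regime $m > 2m_0$ it is $m - 2m_0 + 2n = m_0$, i.e.\ $m = 3m_0 - 2n$, which lies in $(2m_0, \infty)$ iff $2n < m_0$, i.e.\ $n \in \{0, 1, \dots, \lfloor (m_0-1)/2 \rfloor\}$. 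Finally one must check whether $m = 2m_0$ itself (the seam) is counted: there $|m - 2m_0| = 0$ and the equation reads $2n = m_0$, giving one solution iff $m_0$ is even, and this value $m = 2m_0$ falls in the first regime $m_0 < m \le 2m_0$; I would make sure the case split above assigns it to exactly one regime to avoid double counting. Adding the cardinalities — roughly $\lfloor m_0/2 \rfloor$ from the first regime and $\lceil m_0/2 \rceil$ from the second — yields $Z(m_0) = m_0$; again this is easily verified for $m_0 = 1, 2, 3$ giving $1, 2, 3$. The only real subtlety, and the step I expect to require the most care, is the bookkeeping at the boundary indices and at the seam $m = 2m_0$, especially keeping the even and odd cases for $m_0$ straight so that no lattice point is counted twice or omitted.
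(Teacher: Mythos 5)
Your count is correct, and it verifies on the small cases the paper lists ($N(1)=1$, $N(2)=3$, $N(3)=6$, $Z(m_0)=m_0$), but it is organized differently from the paper's proof. You slice the lattice $\{(m,n): m>m_0,\ n\in\mathbb{N}_0\}$ horizontally in $n$ and split the range of $m$ at the seam $m=2m_0$, which forces the parity bookkeeping you flag at the end; that bookkeeping does close (for $m_0$ odd the two sums are $(p+1)^2$ and $p(p+1)$ with $m_0=2p+1$, for $m_0$ even they are $p(p+1)$ and $p^2$ with $m_0=2p$, giving $m_0(m_0+1)/2$ in both cases), and your seam analysis for $Z(m_0)$ correctly assigns $m=2m_0$ to one regime only, yielding $\lfloor m_0/2\rfloor+\lceil m_0/2\rceil=m_0$. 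The paper instead slices along the level sets $|m-2m_0|+2n=m_0-j$: for each fixed value $-2j$ of the eigenvalue $\mu^-_{m,n}(0)$, the number of lattice points equals the multiplicity $m_0-j+1$ of the harmonic-oscillator eigenvalue $\lambda_{m_0-j,0}=2(m_0-j+1)$, so $N(m_0)=1+2+\cdots+m_0$ falls out at once, and $Z(m_0)$ is the multiplicity $m_0+1$ of $\lambda_{m_0,0}$ minus the single excluded point $m=m_0$, $n=0$ (the constraint $m>m_0$ being automatic on the strictly negative levels). Your version is more self-contained, needing no appeal to the multiplicity formula, at the cost of the even/odd case analysis; the paper's version buys a shorter argument and makes transparent why the answers are a triangular number and $m_0$, respectively.
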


\begin{proof}
To count $Z(m_{0})$, we note that $\mu_{m,n}^{-}(0)=0$ if and only if
$|m-2m_{0}|+2n=m_{0}$. The cardinality of the set $\{(\ell,n)\in
\mathbb{Z}\times\mathbb{N}_{0}:\quad|\ell|+2n=m_{0}\}$ coincides with the
multiplicity of the eigenvalue $\lambda_{m_{0},0}=2(m_{0}+1)$ of the
Schr\"{o}dinger equation (\ref{def.vmn}), which is $m_{0}+1$. Since
$|\ell|\leq m_{0}$ translates to $m_{0}\leq m\leq3m_{0}$ and since $m=m_{0}$
contains one zero eigenvalue with $n=0$, we obtain $Z(m_{0})=m_{0}+1-1=m_{0}$.

To count $N(m_{0})$, we follow the same idea. The largest negative eigenvalue
$\mu_{m,n}^{-}(0)=-2$ corresponds to $|m-2m_{0}|+2n=m_{0}-1$, which coincides
with the multiplicity of the eigenvalue $\lambda_{m_{0}-1,0}=2m_{0}$, which is
$m_{0}$. The next negative eigenvalue $\mu_{m,n}^{-}(0)=-4$ corresponds to
$|m-2m_{0}|+2n=m_{0}-2$, which coincides with the multiplicity of the
eigenvalue $\lambda_{m_{0}-2,0}=2(m_{0}-1)$, which is $m_{0}-1$. The count continues until
we reach the smallest negative eigenvalue $\mu_{m,n}^{-}(0)=-2m_{0}$, which
corresponds to $|m-2m_{0}|+2n=0$ and which is simple for $m=2m_{0}$ and $n=0$.
Summing integers from $1$ to $m_{0}$, we obtain $N(m_{0})=1+2+\cdots
+m_{0}=m_{0}(m_{0}+1)/2$.
\end{proof}

\begin{remark}
Lemma \ref{lemma-count-N-Z} yields the proof of item (ii) of Theorem
\ref{theorem-main}.
\end{remark}

Let us give some explicit examples. If $m_{0} = 1$, then $\lambda_{1,0} = 4$
and
\begin{align}
\label{bifurcation-1}\left\{
\begin{array}
[c]{l}%
\sigma(K_{2}) = \{ -2, 2, 2, 6, 6, \cdots\},\\
\sigma(K_{3}) = \{ 0, 4, 4, 8, 8, \cdots\},\\
\sigma(K_{4}) = \{ 2, 6, 6, 10, 10, \cdots\},\\
\vdots
\end{array}
\right.
\end{align}
so that $N(1) = 1$ and $Z(1) = 1$.

If $m_{0} = 2$, then $\lambda_{2,0} = 6$ and
\begin{align}
\label{bifurcation-2}\left\{
\begin{array}
[c]{l}%
\sigma(K_{3}) = \{ -2, 2, 2, 6, 6, \cdots\},\\
\sigma(K_{4}) = \{ -4, 0, 4, 4, 8, 8, \cdots\},\\
\sigma(K_{5}) = \{ -2, 2, 6, 6, 10, 10, \cdots\},\\
\sigma(K_{6}) = \{ 0, 4, 8, 8, 12, 12, \cdots\},\\
\sigma(K_{7}) = \{ 2, 6, 10, 10, 14, 14, \cdots\},\\
\vdots
\end{array}
\right.
\end{align}
so that $N(2) = 3$ and $Z(2) = 2$.

If $m_{0} = 3$, then $\lambda_{3,0} = 8$ and
\begin{align}
\label{bifurcation-3}\left\{
\begin{array}
[c]{l}%
\sigma(K_{4}) = \{ -2, 2, 2, 6, 6, \cdots\},\\
\sigma(K_{5}) = \{ -4, 0, 4, 4, 8, 8, \cdots\},\\
\sigma(K_{6}) = \{ -6, -2, 2, 6, 6, 10, 10, \cdots\},\\
\sigma(K_{7}) = \{ -4, 0, 4, 8, 8, 12, 12, \cdots\},\\
\sigma(K_{8}) = \{ -2, 2, 6, 10, 10, 14, 14, \cdots\},\\
\sigma(K_{9}) = \{ 0, 4, 8, 12, 12, 16, 16, \cdots\},\\
\sigma(K_{10}) = \{ 2, 6, 10, 14, 14, 18, 18, \cdots\},\\
\vdots
\end{array}
\right.
\end{align}
so that $N(3) = 6$ and $Z(3) = 3$.

In what follows, we fix $a > 0$ small enough and consider a continuation of
eigenvalues of $H_{m}(a,\Omega)$ given by (\ref{Hessian-blocks}) with respect
to the parameter $\Omega$ in the interval $(0,2)$. When one of the eigenvalues
of $H_{m}(a,\Omega)$ reaches zero, we say that a secondary bifurcation occurs
along the primary branch of radially symmetric vortices given by (\ref{ua})
and (\ref{branch-1}).

We will show that for every $m=m_{0}+2\ell$, $1\leq\ell\leq m_{0}$, there is
an $a$-independent constant $D_{m,m_{0}}\geq0$ such that the zero eigenvalue
of $K_{m}(0)$ becomes a positive eigenvalue of $H_{m}(a,\Omega)$ for small $a$
and for $\Omega\gtrsim D_{m,m_{0}}a^{2}$. The maximum of $D_{m_{0}+2\ell
,m_{0}}$ for $1\leq\ell\leq m_{0}$ is denoted by $D_{m_{0}}$.

We further show that there is another $a$-independent constant $C_{m_{0}} > 0$
such that when $\Omega$ is increased in the interval $(D_{m_{0}}%
a^{2},2-C_{m_{0}}a^{2})$, then $1+B(m_{0})$ secondary bifurcations occur,
where $B(m_{0}) = m_0 (m_0-1)/2$, at which a negative eigenvalue of $H_{m}%
(a,\Omega)$ for some $m$ and for $\Omega$ below the bifurcation point becomes
a positive eigenvalue of $H_{m}(a,\Omega)$ for the same $m$ and for $\Omega$
above the bifurcation point. The first $B(m_{0})$ secondary bifurcations occur
for values of $\Omega$ sufficiently distant from the value $\Omega_{0} = 2$,
whereas the last secondary bifurcation occurs for the value of $\Omega$ near
but below the value $\Omega_{0} = 2$. The latter case has to be handled in the
presence of infinitely many resonances in the limit $a \to 0$. The aforementioned claims proved in
Section 3 will provide proofs of item (iii) in Theorem \ref{theorem-main}.

At each non-resonant bifurcation point, a new secondary branch of vortex
solutions is born for $\Omega$ on one side of the bifurcation point among the
roots of the nonlinear operator $g$ given by (\ref{g-function}) and (\ref{operator-g}). The secondary
branch represents a multi-vortex configuration near the origin of the total charge $m_{0}$,
where the radial symmetry is now broken. The aforementioned claims proved in
Section 4 and 5 will provide respectively proofs of items (iv) and (v) in
Theorem \ref{theorem-main}.

\section{Secondary bifurcations as $\Omega$ increases}

Let the primary branch of radially symmetric vortices be defined by (\ref{ua})
and (\ref{branch-1}) in the small-amplitude limit. Expanding the family of
operators $K_{m}(a)$ in powers of $a$, we obtain
\begin{align*}
K_{m}(a)  &  = \left[
\begin{array}
[c]{cc}%
- \Delta_{m} + r^{2} - \lambda_{m_{0},0} & 0\\
0 & - \Delta_{m-2 m_{0}} + r^{2} - \lambda_{m_{0},0}%
\end{array}
\right] \\
&  \phantom{t} + a^{2} \left[
\begin{array}
[c]{cc}%
-\omega_{m_{0},0} + 2 e_{m_{0},0}^{2}(r) & e_{m_{0},0}^{2}(r)\\
e_{m_{0},0}^{2}(r) & -\omega_{m_{0},0} + 2e_{m_{0},0}^{2}(r)
\end{array}
\right]  + \mathcal{O}(a^{4}),
\end{align*}
where the correction term is given by a bounded potential on $\mathbb{R}^{+}$.

Also recall from (\ref{Hessian-blocks}) that the operator $H_{m}(a,\Omega)$ is
expanded as $a \to0$ with the leading-order term given by the diagonal
operator $H_{m}(0,\Omega)$ with the entries given by two linear operators:
\begin{equation}
\left\{
\begin{array}
[c]{l}%
L_{+} := -\Delta_{m}+r^{2}-\lambda_{m_{0},0}-\Omega(m-m_{0}),\\
L_{-} := -\Delta_{m-2m_{0}}+r^{2}-\lambda_{m_{0},0}+\Omega(m-m_{0}).
\end{array}
\right.  \label{operator-leading-order}%
\end{equation}

We shall now analyze how eigenvalues of $H_{m}(a,\Omega)$ cross zero when
$\Omega$ is increased in the interval $(0,2)$.

\subsection{Zero eigenvalues of $K_{m}(0)$}

When $a = 0$ and $\Omega= 0$, each operator block $H_{m}(0,0) = K_{m}(0)$ has
a simple zero eigenvalue for $m = m_{0}+2 \ell$, $1 \leq\ell\leq m_{0}$. See
examples in (\ref{bifurcation-1}), (\ref{bifurcation-2}), and
(\ref{bifurcation-3}). The following lemma tells us that the zero eigenvalue
of such $H_{m}(0,0)$ becomes a positive eigenvalue of $H_{m}(a,\Omega)$ for
every sufficiently small $a$, provided the values of $\Omega$ are sufficiently
large and positive.

\begin{lemma}
\label{lemma-zero-eig} For every $m_{0} \in\mathbb{N}$, there exists $a_{0} >
0$ and $D_{m_{0}} \geq0$ such that for every $0 < a < a_{0}$, $\Omega>
D_{m_{0}} a^{2}$, and $1 \leq\ell\leq m_{0}$, there is a small positive
eigenvalue of $H_{m_{0}+2\ell}(a,\Omega)$ which is continuous in $(a,\Omega)$
and converges to the zero eigenvalue of $K_{m_{0}+2\ell}(0)$ as $a \to0$ and
$\Omega\to0$.
\end{lemma}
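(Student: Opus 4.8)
The plan is to treat the zero eigenvalue of $K_{m_0+2\ell}(0)$ by standard analytic perturbation theory in the two small parameters $a$ and $\Omega$, keeping track of the sign of the leading correction. Write $m = m_0 + 2\ell$ with $1 \le \ell \le m_0$. By the analysis in the excerpt, $K_m(0)$ is block-diagonal and its simple zero eigenvalue lives entirely in the second diagonal entry $-\Delta_{m-2m_0} + r^2 - \lambda_{m_0,0}$; the first diagonal entry has $\mu^+_{m,n}(0) = 2(m+2n-m_0) > 0$. The eigenfunction of $K_m(0)$ for the zero eigenvalue is therefore $[0, e_{|m-2m_0|,n_*}]^T$ for the appropriate radial index $n_*$ determined by $|m-2m_0| + 2n_* = m_0$, and this is a \emph{simple} eigenvalue isolated from the rest of the spectrum. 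Since $H_m(a,\Omega) = K_m(a) - \Omega(m-m_0)R$ depends analytically (in fact polynomially through $\mathcal{O}(a^4)$, with an analytic remainder in the relatively bounded sense) on $(a,\Omega)$, Kato's perturbation theorem gives a unique eigenvalue $\nu(a,\Omega)$ of $H_m(a,\Omega)$ near $0$ that is analytic in $(a,\Omega)$ near $(0,0)$, with $\nu(0,0)=0$, together with its eigenprojection.

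First I would compute the first-order term in $\Omega$ with $a=0$: since $H_m(0,\Omega)$ is still block-diagonal with the zero eigenvector purely in the second block, and $-\Omega(m-m_0)R$ contributes $+\Omega(m-m_0)$ on that block, we get $\partial_\Omega \nu(0,0) = (m-m_0) = 2\ell > 0$. This is the key positivity fact: to leading order in $\Omega$ the zero eigenvalue moves up. Next I would compute the $a^2$ correction at $\Omega=0$: using the first-order Rayleigh--Schr\"odinger formula with the $a^2$ term of $K_m(a)$ displayed in the excerpt (the potential $-\omega_{m_0,0} + 2e_{m_0,0}^2$ on the diagonal, $e_{m_0,0}^2$ off-diagonal), and the fact that the zero eigenvector has vanishing first component, the off-diagonal entry does not contribute at first order and one obtains
\[
\partial^2_{a^2}\nu(0,0) \;=\; 2\,\langle e_{|m-2m_0|,n_*}^2, e_{m_0,0}^2 \rangle_{L^2_r} - \omega_{m_0,0}\,=: -2 D_{m,m_0},
\]
which defines $D_{m,m_0}$; it may be positive, negative, or zero depending on $m_0$ and $\ell$. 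Then I set $D_{m_0} := \max_{1\le \ell \le m_0} D_{m_0+2\ell, m_0}$ if any $D_{m,m_0}>0$, and $D_{m_0}=0$ otherwise (consistent with $D_{m_0}\ge 0$).

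Finally I would assemble these expansions. We have $\nu(a,\Omega) = 2\ell\,\Omega - 2 D_{m,m_0} a^2 + (\text{higher order})$, where the higher-order terms are $\mathcal{O}(\Omega^2 + a^2\Omega + a^4)$ uniformly by analyticity. For $\Omega > D_{m_0} a^2 \ge D_{m,m_0} a^2$ we have $2\ell\,\Omega - 2D_{m,m_0}a^2 \ge 2\ell\,\Omega - 2D_{m,m_0}\Omega/D_{m_0}\cdot(D_{m_0}a^2/\Omega)\cdots$ — more simply: on the region $\{0 < a < a_0,\ \Omega \ge D_{m_0}a^2\}$ with $\Omega$ also bounded above (say $\Omega < 2$), the leading term $2\ell\,\Omega - 2D_{m,m_0}a^2$ dominates the remainder once $a_0$ is small enough, because the remainder is $o(\Omega + a^2)$ on this scaling region. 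This shows $\nu(a,\Omega) > 0$, and by construction $\nu$ is continuous in $(a,\Omega)$ and $\nu \to 0$ as $(a,\Omega)\to 0$. Since the claim is for each of the finitely many $\ell \in \{1,\dots,m_0\}$, taking the minimum of the finitely many admissible $a_0$'s and the maximum defining $D_{m_0}$ completes the argument.

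The main obstacle is making the ``remainder is dominated'' step rigorous uniformly on the sliver-shaped region $\Omega \gtrsim a^2$: near $(0,0)$ one cannot simply say the quadratic terms are smaller than the linear term in $\Omega$, because $\Omega$ and $a^2$ are of comparable size there. The clean way is to rescale, setting $\Omega = a^2 s$ with $s \ge D_{m_0}$ bounded below, and expand $a^{-2}\nu(a, a^2 s) = 2\ell\,s - 2 D_{m,m_0} + \mathcal{O}(a^2)$, which is bounded below by a positive constant (uniformly in $s \ge D_{m_0}$ if $D_{m_0}$ is chosen as the max of the positive $D_{m,m_0}$, with strict inequality coming from $2\ell > 0$) once $a$ is small; for $\Omega$ not of order $a^2$ but genuinely larger the positivity is even easier. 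One must also check that the eigenvalue being tracked is the \emph{same} continuous branch throughout — this follows from simplicity of the zero eigenvalue of $K_m(0)$ and the Kato-analytic dependence, so no eigenvalue crossing can steal the branch within a fixed small neighborhood.
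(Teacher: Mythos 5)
Your proof follows essentially the same route as the paper: the simple zero eigenvalue lies in the second diagonal block with eigenfunction $e_{|m_0-2\ell|,n(\ell)}$, regular (Kato/Rayleigh--Schr\"odinger) perturbation in $a$ gives the $a^{2}$-coefficient $-\omega_{m_0,0}+2\langle e_{m_0,0}^{2},e_{|m_0-2\ell|,n(\ell)}^{2}\rangle_{L^2_r}$ (the paper's $\tilde{\mu}_{\ell}$), and the $\Omega(m-m_0)$ term acts as a positive shift, so the eigenvalue becomes positive once $\Omega$ exceeds a constant multiple of $a^{2}$, exactly as in the paper's argument. The only minor point is that with your exact choice $D_{m_0}=\max_{\ell} D_{m_0+2\ell,m_0}$ the leading margin $2\ell s-2D_{m,m_0}$ can degenerate as $s=\Omega/a^{2}\downarrow D_{m_0}$ (when $\ell=1$ attains the maximum), so one should enlarge $D_{m_0}$ slightly to absorb the $\mathcal{O}(a^{2})$ remainder --- harmless, since the lemma only asserts existence of some $D_{m_0}\geq 0$.
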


\begin{proof}
The zero eigenvalue of $K_{m}(0)$ for $m = m_{0} + 2 \ell$, $1 \leq\ell\leq
m_{0}$ corresponds to the second diagonal operator in $K_{m}(0)$. Let us show
by the perturbation theory argument that the zero eigenvalue is continued as a
small $\mathcal{O}(a^{2})$ eigenvalue of $K_{m}(a)$ for all $a$ sufficiently small.

The eigenfunction of $K_{m}(0)$ with $m = m_{0} + 2 \ell$ for the zero
eigenvalue is obtained from the balance
\[
\lambda_{m-2m_{0},n} = \lambda_{m_{0},0} \quad\Rightarrow\quad n(\ell) =
\frac{m_{0} - |2\ell- m_{0}|}{2}.
\]
Since the zero eigenvalue of $K_{m_{0} + 2 \ell}(0)$ is simple, the regular
perturbation theory in \cite{Kato} implies the existence of a small eigenvalue $\mu_{\ell
}(a)$ of the linear operator $K_{m_{0} + 2 \ell}(a)$ and the corresponding
eigenvector $(V_{m_{0} + 2 \ell},W_{-m_{0} + 2 \ell})$, which are analytic functions of $a$.
Their Taylor expansions are given by
\begin{align}
\label{1-parameter-perturbation}\left\{
\begin{array}
[c]{lcl}%
V_{m_{0} + 2 \ell} & = & a^{2} \tilde{V}_{m_{0} + 2 \ell} + \mathcal{O}_{L^2_r}%
(a^{4}),\\
W_{-m_{0} + 2 \ell} & = & c_{-m_{0} + 2 \ell} e_{|m_{0}-2\ell|,n(\ell)} +
a^{2} \tilde{W}_{-m_{0} + 2 \ell} + \mathcal{O}_{L^2_r}(a^{4}),\\
\mu_{\ell} & = & a^{2} \tilde{\mu}_{\ell} + \mathcal{O}(a^{4}),
\end{array}
\right.
\end{align}
where $c_{-m_0+2\ell} \neq 0$ is arbitrary, $ \tilde{V}_{m_{0} + 2 \ell}$,
$\tilde{W}_{-m_{0} + 2 \ell}$, and $\tilde{\mu}_{\ell}$ are obtained by
the standard projection algorithm, and the correction terms are defined uniquely
by the method of Lyapunov--Schmidt reductions. In particular,
$\tilde{\mu}_{\ell}$ is obtained from
\begin{equation}
\label{mu-perturbation}\tilde{\mu}_{\ell} = - \omega_{m_{0},0} + 2 \langle
e_{m_{0},0}^{2}, e_{|m_{0}-2\ell|,n(\ell)}^{2} \rangle_{L^2_r}.
\end{equation}
If $\tilde{\mu}_{\ell} \neq0$, the eigenvalue $\mu_{\ell}(a)$ is generally
nonzero but $\mathcal{O}(a^{2})$ small.

It follows from (\ref{operator-leading-order}) that the
$\Omega$-term in $L_{-}$ is a positive perturbation to $K_{m}(0)$ for $m > m_{0}$. Therefore,
there exists an $a$-independent constant $D_{\ell,m_{0}} \geq0$ such that the
eigenvalue $\mu_{\ell}(a)$ continued with respect to the parameter $\Omega$ is
strictly positive for $\Omega> D_{\ell,m_{0}} a^{2}$. The assertion of the lemma is proved by
taking the largest of
$D_{\ell,m_{0}}$ for all admissible $1 \leq\ell\leq m_{0}$ as $D_{m_{0}}$.
\end{proof}

\begin{remark}
Lemma \ref{lemma-zero-eig} yields the existence of constant $D_{m} \geq0$ in
item (iii) of Theorem \ref{theorem-main}.
\end{remark}

\begin{remark}
If $\tilde{\mu}_{\ell} > 0$ in the perturbation result (\ref{mu-perturbation}),
then $\mu_{\ell} > 0$ for every small $a > 0$ and $\Omega > 0$. If this is true
for every $1 \leq\ell\leq m_{0}$, then $D_{m_{0}} = 0$ in Lemma \ref{lemma-zero-eig}.
In particular, this is true for $1
\leq m_{0} \leq3$. Indeed, for $\ell= m_{0}$ and $n(\ell) = 0$, we obtain from
(\ref{HGvalues}) and (\ref{mu-perturbation}):
\[
\tilde{\mu}_{m_{0}} = \| e_{m_{0},0}\|_{L^{4}_{r}}^{4} > 0.
\]
For $\ell= m_{0} - 1$ and $n(\ell) = 1$, we use the following formula for the
$L^{2}_{r}(\mathbb{R}^{+})$-normalized Hermite--Gauss solutions of the
Schr\"{o}dinger equation (\ref{def.vmn}) with $\lambda_{m,1} = 2 (m+3)$:
\begin{equation}
\label{H-1-functions}e_{m,1}(r) = \frac{\sqrt{2}}{\sqrt{(m+1)!}} r^{m}
(m+1-r^{2}) e^{-\frac{r^{2}}{2}}.
\end{equation}
Then, we obtain from (\ref{mu-perturbation}) for $m_{0} \geq2$:
\[
\tilde{\mu}_{m_{0}-1} = 2 \langle e_{m_{0},0}^{2}, e_{m_{0}-2,1}^{2} \rangle_{L^2_r}
- \| e_{m_{0},0}\|_{L^{4}_{r}}^{4} = \frac{(2m_{0})! (m_{0}^{2}+m_{0}%
-1)}{4^{m_{0}} (m_{0}!)^{2}} > 0.
\]
By the symmetry, we also have $\mu_{1} = \mu_{m_{0}-1} > 0$. Thus, for $1 \leq
m_{0} \leq3$, we have $\tilde{\mu}_{\ell} > 0$ for all admissible $1 \leq
\ell\leq m_{0}$.
\end{remark}

\begin{remark}
It remains unclear if $\tilde{\mu}_{\ell} > 0$ for the other values in $2
\leq\ell\leq m_{0} - 2$ for $m_{0} \geq4$.
\end{remark}

\subsection{Zero eigenvalues of $H_{m}(a,\Omega)$ for $\Omega\in(0,2)$}

For $m>m_{0}$, the leading-order diagonal operator $H_{m}(0,\Omega)$ given by
the operators $L_{+}$ and $L_{-}$ in (\ref{operator-leading-order}) has an
eigenbasis
\begin{equation}
\{(e_{m,n},0);(0,e_{|m-2m_{0}|,n})\}_{n\in\mathbb{N}_{0}}.
\label{basis-bifurcation}%
\end{equation}
Since $H_{m}(a,\Omega)$ is self-adjoint, by regular perturbation theory in \cite{Kato}, the
operator $H_{m}(a,\Omega)$ has a set of eigenvalues counted by $n\in
\mathbb{N}_{0}$:
\begin{equation}
\left\{
\begin{array}
[c]{ll}%
\mu_{m,n}^{+}(a,\Omega) & := \lambda_{m,n}-\lambda_{m_{0},0}-\Omega
(m-m_{0})+\mathcal{O}(a^{2}),\\
\mu_{m,n}^{-}(a,\Omega) & :=\lambda_{m-2m_{0},n}-\lambda_{m_{0},0}%
+\Omega(m-m_{0})+\mathcal{O}(a^{2}).
\end{array}
\right.  \label{mu-plus-minus}%
\end{equation}
For $m>m_{0}$, $n\in\mathbb{N}_{0}$, and $\Omega<2$, we have
\[
\lambda_{m,n}-\lambda_{m_{0},0}-\Omega(m-m_{0})=(2-\Omega)(m-m_{0})+4n>0.
\]
Therefore, the eigenvalues $\mu_{m,n}^{+}(a,\Omega)$ never become zero for
small $a$ and $\Omega<2$. On the other hand, the eigenvalues $\mu_{m,n}%
^{-}(a,\Omega)$ become zero when $\Omega=\Omega_{m,n}(a)$ given by
\begin{equation}
\Omega_{m,n}(a)=2\frac{m_{0}-|m-2m_{0}|}{m-m_{0}}-\frac{4n}{m-m_{0}%
}+\mathcal{O}(a^{2})\text{.} \label{bifurcation-point}%
\end{equation}
Let $B(m_{0})$ denote the number of eigenvalues $\mu_{m,n}^{-}$ crossing zero
at $\Omega=\Omega_{m,n}(a)$ with $\Omega_{m,n}(0)\in(0,2)$. The following
lemma gives the count of $B(m_{0})$.

\begin{lemma}
\label{lemma-count-B} For every $m_{0} \in\mathbb{N}$, we have
\begin{equation}
\label{count-B}B(m_{0}) = \frac{m_{0}(m_{0}-1)}{2}.
\end{equation}

\end{lemma}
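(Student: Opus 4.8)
The plan is to count, for each fixed $m > m_0$, the number of indices $n \in \mathbb{N}_0$ for which the bifurcation curve $\Omega_{m,n}(0)$ given by (\ref{bifurcation-point}) lands strictly inside the open interval $(0,2)$, and then sum over $m$. It is cleaner to reparameterize by writing $m = m_0 + k$ with $k \in \mathbb{N}$, so that the second diagonal operator in $K_m(0)$ carries angular index $|m - 2m_0| = |k - m_0|$. The condition $\mu_{m,n}^{-}(0) < 0$ (respectively $= 0$) studied in Lemma \ref{lemma-count-N-Z} is $|k - m_0| + 2n < m_0$ (respectively $= m_0$), and for each such pair $(k,n)$ there is exactly one crossing $\Omega_{m,n}(0) = 2\frac{m_0 - |k-m_0|}{k} - \frac{4n}{k} > 0$. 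So $B(m_0)$ counts those pairs with the crossing value additionally $< 2$; the zero-eigenvalue pairs ($|k-m_0| + 2n = m_0$) are handled separately by Lemma \ref{lemma-zero-eig} and must be excluded here, while the $k = m_0$, $n = 0$ eigenvalue is the gauge mode excluded by Lemma \ref{lemma-1}.

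The key computational step is to determine precisely when $\Omega_{m,n}(0) < 2$. Writing $\ell := |k - m_0|$, the crossing value is $\frac{2(m_0 - \ell) - 4n}{k}$, and since $\mu_{m,n}^{-}(0) = 2(\ell + 2n - m_0) < 0$ exactly for $\ell + 2n \le m_0 - 1$, we have a negative eigenvalue precisely when $2(m_0 - \ell) - 4n \ge 2$, i.e. the numerator is a positive even integer. The inequality $\Omega_{m,n}(0) < 2$ then becomes $2(m_0 - \ell) - 4n < 2k = 2(m_0 + \text{sign stuff})$. I would split into the two cases $k \le m_0$ (so $\ell = m_0 - k$, numerator $= 2k - 4n$, and the condition $2k - 4n < 2k$ is automatic as soon as $n \ge 1$, while $n = 0$ gives exactly $\Omega = 2$, which is excluded) and $k > m_0$ (so $\ell = k - m_0$, numerator $= 2(2m_0 - k) - 4n = 4m_0 - 2k - 4n$, and one needs $4m_0 - 2k - 4n < 2k$, i.e. $m_0 - n < k$, together with the positivity constraint $k + 2n < 2m_0$ inherited from $\ell + 2n < m_0$). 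Carefully tabulating the surviving $(k,n)$ pairs in each regime and summing, I expect the total to telescope to $1 + 2 + \cdots + (m_0 - 1) = m_0(m_0-1)/2$, matching (\ref{count-B}); the ``$+1$'' in the theorem's count of bifurcations comes separately from the last bifurcation at $\Omega_{m_0+1,0} \approx 2$, which is the $k=1$, $n=0$ case in the $k \le m_0$ regime that the present lemma's $(0,2)$-open convention deliberately omits.

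The main obstacle, and the step requiring genuine care rather than bookkeeping, is the boundary case $\Omega_{m,n}(0) = 2$ and how the $\mathcal{O}(a^2)$ correction in (\ref{bifurcation-point}) moves it. For $k \le m_0$ and $n = 0$ we get exactly $\Omega_{m,0}(0) = 2$ for every such $k$, so a priori there could be up to $m_0$ crossings sitting right at the endpoint; the delicate point is that for $a > 0$ small these collapse — the statement and later analysis want only the single ``last'' bifurcation near $\Omega = 2$, so one must argue that the infinitely many operators $L_-$ with $|m - 2m_0| = |k - m_0|$ and $n = 0$ producing $\mu_{m,0}^{-}(0,\Omega) = (2 - \Omega)(m_0 - |m-2m_0|) + \mathcal{O}(a^2)$ — wait, more precisely $\mu^{-}_{m,0}(0,\Omega) = 2(|m-2m_0| - m_0) + \Omega(m - m_0) + \mathcal{O}(a^2) = (m-m_0)(\Omega - 2) + \mathcal{O}(a^2)$ for $k \le m_0$ — all cross zero within an $\mathcal{O}(a^2)$ window below $\Omega = 2$, so they are counted not here but in the separate treatment of the resonant-looking cluster at $\Omega_0 = 2$ referenced just before the lemma. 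I would therefore prove the lemma strictly for $\Omega_{m,n}(0) \in (0,2)$ open, do the case analysis above, and sum; the endpoint phenomena are explicitly deferred, which is consistent with the theorem's ``$1 + B(m_0)$'' split. A useful sanity check at the end is to verify the count against the worked examples: $m_0 = 1$ gives $B(1) = 0$ (only the gauge mode and the endpoint crossing at $\Omega = 2$), $m_0 = 2$ gives $B(2) = 1$ (from (\ref{bifurcation-2}), the eigenvalue $-2$ of $\sigma(K_5)$ crosses at $\Omega = 2/3 \in (0,2)$ while the $-2$ of $\sigma(K_3)$ and the $-4,-2$ at the relevant places sit at $\Omega = 2$), and $m_0 = 3$ gives $B(3) = 3$, all matching (\ref{count-B}).
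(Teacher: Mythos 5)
Your proposal follows essentially the same route as the paper: both reduce the lemma to counting the pairs $(m,n)$ with $m>m_{0}$, $n\in\mathbb{N}_{0}$ for which the explicit crossing value (\ref{bifurcation-point}) at $a=0$ lies in the open interval $(0,2)$, using that positivity of $\Omega_{m,n}(0)$ is equivalent to $\mu_{m,n}^{-}(0)<0$. The paper organizes the count by fixing $n$ and recording the admissible range $m_{0}+2n+1\leq m\leq 3m_{0}-2n-1$; you instead split according to the sign of $m-2m_{0}$ (your regimes $k\leq m_{0}$ and $k>m_{0}$), and your inequalities in each regime are correct, including the observation that for $m_{0}<m\leq 2m_{0}$ and $n=0$ the crossing sits exactly at $\Omega=2$ and is excluded from the open interval, consistent with the separate treatment of the cluster near $\Omega_{0}=2$ in Section 3.3.

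Two points need repair. First, the lemma \emph{is} the count, and you stop at ``I expect the total to telescope''; the tabulation must actually be carried out. It does close: your first regime contributes $\sum_{k=1}^{m_{0}}\lfloor (k-1)/2\rfloor$ (the surviving constraints there are $n\geq 1$ and $2n<k$), your second contributes $(m_{0}-1)+\sum_{n\geq 1}\max\{m_{0}-2n-1,0\}$ (the constraints are $k+2n<2m_{0}$ with $k\geq m_{0}+1$, the condition $\Omega<2$ being automatic), and for either parity of $m_{0}$ the two sums add to $m_{0}(m_{0}-1)/2$; equivalently, regrouping by $n$ reproduces the paper's formula $m_{0}-1+\sum_{n\geq 1}\max\{2m_{0}-4n-1,0\}$. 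Second, your identification of the $k=m_{0}$, $n=0$ mode as the gauge mode of Lemma \ref{lemma-1} is incorrect: that pair is $m=2m_{0}$, where the eigenvalue $-2m_{0}$ of $K_{2m_{0}}(0)$ reaches zero exactly at $\Omega=2$; the gauge mode lives in the block $m=m_{0}$ (your $k=0$), which never enters your enumeration over $k\in\mathbb{N}$. This mislabeling does not affect the count, since that pair is excluded anyway by your ``$n=0$, $k\leq m_{0}$ sits at $\Omega=2$'' rule, but it should be corrected.
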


\begin{proof}
To count $B(m_{0})$, we count the values of $m > m_{0}$ for the first values
of $n \in\mathbb{N}_{0}$, when $\Omega_{m,n}(0) \in(0,2)$:

\begin{itemize}
\item For $n = 0$, the inequality $0 < m_{0} - |m-2m_{0}| < m-m_{0}$ is true
for $2m_{0}+1 \leq m \leq3m_{0}-1$;

\item For $n = 1$, the inequality $0 < m_{0} - |m-2m_{0}| -2 < m-m_{0}$ is
true for $m_{0}+3 \leq m \leq3m_{0}-3$;

\item For $n = 2$, the inequality $0 < m_{0} - |m-2m_{0}| -4 < m-m_{0}$ is
true for $m_{0}+5 \leq m \leq3m_{0}-5$.
\end{itemize}
\noindent For a general $n \in\mathbb{N}$, we have $\Omega_{m,n}(0) \in(0,2)$ for $m_{0}
+ 2n+1 \leq m \leq3m_{0} - 2n-1$ provided that the range for $m$ is nonempty.
Summing up all cases, we have
\[
B(m_{0}) = m_{0} - 1 + \sum_{n=1}^{\infty} [2 m_{0} - 4n - 1]_{+}
\]
where $[a]_{+}$ is $a$ when $a \geq0$ and $0$ if $a < 0$. The sum is finite as
$n$ terminates at the last entry for which $2m_{0} - 4n - 1 > 0$. If $m_{0}$
is odd, then the last entry corresponds to $N = (m_{0}-1)/2$ and we obtain
\[
\sum_{n=1}^{\infty} [2 m_{0} - 4n - 1]_{+} = \sum_{n=1}^{N} (2m_{0}-4n-1) =
\frac{m_{0}^{2}-3m_{0}+2}{2}.
\]
If $m_{0}$ is even, then the last entry corresponds to $N = m_{0}/2-1$ and we
obtain
\[
\sum_{n=1}^{\infty} [2 m_{0} - 4n - 1]_{+} = \sum_{n=1}^{N} (2m_{0}-4n-1) =
\frac{m_{0}^{2}-3m_{0}+2}{2}.
\]
Adding $m_0 - 1$ to this number, we obtain (\ref{count-B}) in both cases.
\end{proof}

\vspace{0.25cm}

In particular, we have $B(1) = 0$, $B(2) = 1$, $B(3) = 3$, and $B(4) = 6$. See
examples in (\ref{bifurcation-1}), (\ref{bifurcation-2}), and
(\ref{bifurcation-3}). Let us list the bifurcation values of $\Omega$ for
these examples:

\begin{itemize}
\item For $m_{0} = 1$, no bifurcations occur;

\item For $m_{0} = 2$, the only bifurcation occurs at $\Omega_{5,0}(0) = 2/3$;

\item For $m_{0} = 3$, three bifurcations occur at $\Omega_{7,0}(0) = 1$,
$\Omega_{8,0}(0) = 2/5$, and $\Omega_{6,1}(0) = 2/3$;

\item For $m_{0} = 4$, six bifurcations occur at $\Omega_{9,0}(0) = 6/5$,
$\Omega_{10,0}(0) = 2/3$, $\Omega_{11,0}(0) = 2/7$, $\Omega_{7,1}(0) = 2/3$,
$\Omega_{8,1}(0) = 1$, and $\Omega_{9,1}(0) = 2/5$;
\end{itemize}

\begin{remark}
\label{remark-resonant-curve} Lemma \ref{lemma-count-B} yields the number
$B(m_{0})$ in item (iii) of Theorem \ref{theorem-main}. Note that the
bifurcation points of $\Omega$ are simple for $1\leq m_{0}\leq3$. Multiple
bifurcation points exist in a general case for $m_{0}\geq4$, e.g.
$\Omega_{10,0}(0)=\Omega_{7,1}(0)=2/3$ for $m_{0}=4$.
\end{remark}

The following proposition summarizes properties of $H_{m}(a,\Omega)$ near each
bifurcation point. These properties are needed for the bifurcation analysis in
Section 4.

\begin{proposition}
For every $m_{0}\in\mathbb{N}$, let $\Omega_{\ast}(a)$ be one of the
bifurcation points defined by (\ref{bifurcation-point}). Assume it has
multiplicity $k$ and corresponds to $m_{1},\ldots,m_{k}>m_{0}$. There exists
$a_{0}>0$, $C_{m_{0}}>0$, and $E_{m_{0}}>0$ such that for every $0<a<a_{0}$,
$|\Omega-\Omega_{\ast}(a)|<C_{m_{0}}a^{2}$, and every $m>m_{0}$ such that
$m\notin\{m_{1},\ldots,m_{k}\}$, the operator $H_{m}(a,\Omega)$ is invertible
in $L_{r}^{2}(\mathbb{R}^{+})$ with the bound
\begin{equation}
\Vert H_{m}(a,\Omega)^{-1}\Vert_{L_{r}^{2}\rightarrow H_{r}^{2}\cap
L_{r}^{2,2}}\leq E_{m_{0}},\quad m>m_{0},\quad m\notin\{m_{1},\ldots,m_{k}\}.
\label{bound-on-inverse-general}%
\end{equation}
Moreover, the number of negative eigenvalues of $H_{m}(a,\Omega)$,
$m\notin\{m_{1},\ldots,m_{k}\}$ remains the same for every $\Omega$ in
$|\Omega-\Omega_{\ast}(a)|<C_{m_{0}}a^{2}$. On the other hand, the number of
negative eigenvalues for $H_{m}(a,\Omega)$, $m\in\{m_{1},\ldots,m_{k}\}$ is
reduced by one when $\Omega$ crosses $\Omega_{\ast}(a)$ in $|\Omega
-\Omega_{\ast}(a)|<C_{m_{0}}a^{2}$. \label{proposition-secondary-bifurcations}
\end{proposition}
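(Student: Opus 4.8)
The plan is to exploit the block structure already established: by Lemmas \ref{lemma-1} and \ref{lemma-2} it suffices to work with the blocks $H_m(a,\Omega)$ for $m>m_0$, and by the expansion \eqref{mu-plus-minus} the eigenvalues of each such block are analytic perturbations of the explicit eigenvalues $\mu_{m,n}^\pm(0,\Omega)=\lambda_{m,n}-\lambda_{m_0,0}-\Omega(m-m_0)$ and $\lambda_{m-2m_0,n}-\lambda_{m_0,0}+\Omega(m-m_0)$. First I would record the finite set $\mathcal{S}$ of all bifurcation values $\Omega_{m,n}(0)\in[0,2)$ arising from \eqref{bifurcation-point}, together with the boundary value $\Omega=2$ at which the $\mu_{m,n}^+$ would vanish; since this set is finite, the distance from the chosen $\Omega_*(0)$ to every other point of $\mathcal{S}\cup\{2\}$ is bounded below by some $a$-independent $\delta_{m_0}>0$. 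Choosing $C_{m_0}<\delta_{m_0}/4$ (and smaller than any constant already fixed in Lemma \ref{lemma-zero-eig}) guarantees that for $|\Omega-\Omega_*(a)|<C_{m_0}a^2$ and all small $a$, no eigenvalue of any block $H_m(a,\Omega)$ with $m\notin\{m_1,\dots,m_k\}$ is within distance, say, $\delta_{m_0}/2$ of zero: the leading term $\mu_{m,n}^\pm(0,\Omega)$ is either bounded away from $0$ uniformly, or — in the degenerate case $m\in\{m_0+2\ell\}$ where $K_m(0)$ has a zero eigenvalue — it is continued by Lemma \ref{lemma-zero-eig} to a strictly positive $\mathcal{O}(a^2)$ eigenvalue once $\Omega>D_{m_0}a^2$, which holds since $\Omega_*(a)>D_{m_0}a^2$ by the ordering of bifurcation points. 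The $\mathcal{O}(a^2)$ error terms in \eqref{mu-plus-minus} are dominated by $\delta_{m_0}/2$ for $a$ small, so $0\notin\sigma(H_m(a,\Omega))$ and in fact $\mathrm{dist}(0,\sigma(H_m(a,\Omega)))\geq c_{m_0}>0$ for all such $m$.

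Next I would turn the uniform spectral gap into the uniform resolvent bound \eqref{bound-on-inverse-general}. The operator $H_m(a,\Omega)$ is self-adjoint on $L^2_r(\mathbb{R}^+)$ with compact resolvent (harmonic confinement), so the spectral gap $c_{m_0}$ gives $\|H_m(a,\Omega)^{-1}\|_{L^2_r\to L^2_r}\leq c_{m_0}^{-1}$ immediately; the point is to upgrade the target space to $H^2_r\cap L^{2,2}_r$. For this I would write $H_m(a,\Omega)=H_m^{(0)}+a^2 P_m(a)+\Omega(m_0-m)R$ where $H_m^{(0)}=\mathrm{diag}(-\Delta_m+r^2,-\Delta_{m-2m_0}+r^2)$ and $P_m(a)$ is the bounded potential from the expansion of $K_m(a)$; since $-\Delta_m+r^2$ has domain $H^2_r\cap L^{2,2}_r$ with the graph norm equivalent to the $H^2_r\cap L^{2,2}_r$ norm, one has $\|w\|_{H^2_r\cap L^{2,2}_r}\lesssim \|(H_m^{(0)}+1)w\|_{L^2_r}\lesssim \|H_m(a,\Omega)w\|_{L^2_r}+\|w\|_{L^2_r}$ uniformly in the relevant $m$ (note $m>m_0$ only helps the $-r^{-2}m^2$ terms), and combining with the $L^2_r\to L^2_r$ bound yields \eqref{bound-on-inverse-general} with a single $E_{m_0}$. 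One subtlety to check here is uniformity over the infinitely many $m>m_0$: for $m$ large, $H_m^{(0)}\geq 2(m-m_0)-\lambda_{m_0,0}+$ (something positive), so the gap actually grows and the resolvent bound is automatic; only finitely many $m$ need the case-by-case argument above, so taking the max over those finitely many constants is legitimate.

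Finally, the statements about the count of negative eigenvalues follow from continuity of the eigenvalue branches. For $m\notin\{m_1,\dots,m_k\}$, no eigenvalue of $H_m(a,\Omega)$ crosses zero on the interval $|\Omega-\Omega_*(a)|<C_{m_0}a^2$ (by the gap just established), so by continuity of isolated eigenvalues of a self-adjoint analytic family, the number of negative eigenvalues is locally constant. For $m\in\{m_1,\dots,m_k\}$, exactly one eigenvalue branch — namely $\mu_{m,n}^-(a,\Omega)$ with $(m,n)$ the index realizing $\Omega_*$ — crosses zero, and it does so with strictly negative $\Omega$-derivative: $\partial_\Omega\mu_{m,n}^-(0,\Omega)=(m-m_0)>0$... wait, sign: $\mu_{m,n}^-$ has $+\Omega(m-m_0)$, so it is increasing in $\Omega$, hence it passes from negative (below $\Omega_*$) to positive (above $\Omega_*$), reducing the count of negative eigenvalues by one as $\Omega$ crosses $\Omega_*(a)$; the $\mathcal{O}(a^2)$ perturbation does not change the sign of this transversal derivative for small $a$. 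I expect the main obstacle to be the uniformity-in-$m$ bookkeeping: one must verify that $\delta_{m_0}$, $c_{m_0}$, and $E_{m_0}$ can genuinely be chosen independently of $m$ over the infinite family $\{m>m_0\}$, which I would handle by separating the finitely many "small $m$" (explicit computation, as illustrated in \eqref{bifurcation-1}–\eqref{bifurcation-3}) from the cofinitely many "large $m$" (where the diagonal operator's spectrum is pushed far from zero by the growing $(2-\Omega)(m-m_0)$ and $2(m-2m_0)$ terms).
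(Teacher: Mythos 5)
Your proposal is correct and follows essentially the same route as the paper: a uniform spectral gap for the non-bifurcating blocks yields the invertibility bound \eqref{bound-on-inverse-general} and the constancy of the negative count, while in each bifurcating block the single zero eigenvalue crosses transversally with positive $\Omega$-slope, which is exactly the paper's computation via $\partial_\Omega H_m=-(m-m_0)R$ and the Krein quantity $S_m=\|V_m\|_{L^2_r}^2-\|W_{m-2m_0}\|_{L^2_r}^2<0$ (your leading-order slope $m-m_0>0$, preserved for small $a$, is the same fact since $V_m\to0$ as $a\to0$). The additional bookkeeping you supply — uniformity in $m$ of the gap and of the graph-norm estimate, with large $m$ handled by the linearly growing gap — is detail the paper leaves implicit, and you handle it correctly.
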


\begin{proof}
First, we note that for each $m > m_{0}$, there may be at most one eigenvalue
of $H_{m}(a,\Omega)$ which becomes zero at $\Omega= \Omega_{*}(a)$. Bound
(\ref{bound-on-inverse-general}) follows from the fact that $H_{m}%
(a,\Omega_{*}(a))$ with $m \notin\{m_{1},\ldots,m_{k}\}$ has no
eigenvalues in the neighborhood of zero. On the other hand, each simple eigenvalue of $H_{m}(a,\Omega
_{*}(a))$ with $m \in\{m_{1},\ldots,m_{k}\}$ is continued in $\Omega$
according to the derivative
\begin{equation}
\label{derivative-H}\frac{\partial H_{m}}{\partial \Omega}(a,\Omega) = -(m-m_{0}) R.
\end{equation}
Let $(V_{m},W_{m-2m_{0}})$ be the corresponding eigenvector for the zero
eigenvalue of $H_{m}(a,\Omega_{*}(a))$. Since $m > m_{0}$, the eigenvalue is
positive for $\Omega\gtrsim\Omega_{*}(a)$ and negative for $\Omega
\lesssim\Omega_{*}(a)$ if $S_{m} < 0$, where
\begin{equation}
\label{krein-sign}S_{m} := \| V_{m} \|_{L^{2}_{r}}^{2} - \| W_{m-2m_{0}}
\|^{2}_{L^{2}_{r}}.
\end{equation}
Since $V_{m} \to0$ as $a \to0$, we have $S_{m} < 0$ for each $m \in
\{m_{1},\ldots,m_{k}\}$, provided $a$ is small enough.
\end{proof}

\begin{remark}
The quantity $S_{m}$ defined by (\ref{krein-sign}) is referred to as the Krein
quantity. The sign of $S_{m}$ gives the Krein signature of the neutrally
stable eigenvalues of the spectral stability problem associated with the
radially symmetric vortices in the case of no rotation \cite{Kollar}.
\end{remark}

\begin{definition}
If $k = 1$ in Proposition \ref{proposition-secondary-bifurcations}, we say
that the bifurcation point $\Omega_{*}(a)$ is non-resonant.
\label{remark-non-resonance}
\end{definition}

\subsection{Zero eigenvalues of $H_{m}(a,\Omega)$ for $\Omega$ near $2$}

Consider the rotation frequency $\Omega= 2 + \mathcal{O}(a^{2})$. According to
(\ref{bif-prob}) and (\ref{bifurcations-K}), see examples in
(\ref{bifurcation-1}), (\ref{bifurcation-2}), and (\ref{bifurcation-3}), there
are infinitely many resonances for $a = 0$.

We will show that if $\Omega$ is defined at a particular value denoted by
$\Omega_{m_{0}+1,0}(a) = 2 + \mathcal{O}(a^{2})$, for which the spectral
stability problem (\ref{bif-prob}) with $m = m_{0} + 1$ admits a nontrivial
solution, then the blocks $H_{m}(a,\Omega)$ of the Hessian operator for every
$m \geq m_{0} + 2$ are invertible in $L^{2}_{r}(\mathbb{R}^{+})$ near $\Omega=
\Omega_{m_{0}+1,0}(a)$ and the smallest eigenvalue of $H_{m}(a,\Omega
_{m_{0}+1,0}(a))$ is proportional to $\mathcal{O}(a^{2})$. At the same time,
the block $H_{m_{0}+1}(a,\Omega_{m_{0}+1,0}(a))$ has a simple zero eigenvalue
and a simple positive eigenvalue proportional to $\mathcal{O}(a^{2})$. We also
show for $1 \leq m_{0} \leq16$ that the blocks $H_{m}(a,\Omega_{m_{0}%
+1,0}(a))$ for $m_{0}+2 \leq m \leq2m_{0}$ have exactly one small negative
eigenvalue proportional to $\mathcal{O}(a^{2})$, whereas all other eigenvalues
are strictly positive.

The following lemma gives the precise location of $\Omega_{m_{0}+1,0}(a) = 2 +
\mathcal{O}(a^{2})$.

\begin{lemma}
There exists $a_{0} > 0$ such that for every $0 < a < a_{0}$, there exists
$\Omega_{m_{0}+1,0}(a) < 2$ given asymptotically by
\begin{equation}
\label{frequency-1}\Omega_{m_{0}+1,0}(a) := 2 -\frac{(2m_{0})!}{4^{m_{0}}
m_{0}! (m_{0}+1)!} a^{2} + \mathcal{O}(a^{4}),
\end{equation}
such that $H_{m_{0}+1}(a,\Omega_{m_{0}+1,0}(a))$ has a simple zero eigenvalue.
\label{lemma-last-bifurcation}
\end{lemma}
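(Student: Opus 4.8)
The plan is to locate the value $\Omega_{m_0+1,0}(a)$ as the place where the block $H_{m_0+1}(a,\Omega)$ develops a zero eigenvalue, and to show this occurs just below $\Omega_0 = 2$ with the stated $a^2$-correction. First I would write out $H_{m_0+1}(a,\Omega) = K_{m_0+1}(a) - \Omega(1-m_0) R$ (so $m - m_0 = 1$ here), and examine its unperturbed limit $H_{m_0+1}(0,2)$. From \eqref{bifurcations-K} with $m = m_0+1$, the first diagonal entry is $-\Delta_{m_0+1} + r^2 - \lambda_{m_0,0} - \Omega$, whose smallest eigenvalue at $\Omega = 2$ is $\lambda_{m_0+1,0} - \lambda_{m_0,0} - 2 = 2(m_0+2) - 2(m_0+1) - 2 = 0$, attained at $e_{m_0+1,0}$. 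The second diagonal entry is $-\Delta_{m_0-1} + r^2 - \lambda_{m_0,0} + \Omega$, whose eigenvalues at $\Omega = 2$ are $\lambda_{m_0-1,n} - \lambda_{m_0,0} + 2 = 2(m_0-1+2n+1) - 2(m_0+1) + 2 = 4n \geq 0$, so the smallest is $0$ at $e_{m_0-1,0}$. Thus $H_{m_0+1}(0,2)$ has a \emph{double} zero eigenvalue with eigenvectors $(e_{m_0+1,0},0)$ and $(0,e_{m_0-1,0})$, and all other spectrum bounded away from $0$ by a gap of at least $4$.

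Next I would set $\Omega = 2 + a^2 \Omega_1 + \mathcal{O}(a^4)$ and perform Lyapunov--Schmidt reduction / regular perturbation theory (Kato, already cited) on the two-dimensional degenerate subspace $X_0 = \mathrm{span}\{(e_{m_0+1,0},0),(0,e_{m_0-1,0})\}$. Writing $H_{m_0+1}(a,\Omega) = H_{m_0+1}(0,2) + a^2 \mathcal{V} - a^2 \Omega_1 (1-m_0) R + \mathcal{O}(a^4)$ where $\mathcal{V}$ is the bounded matrix potential from the expansion of $K_m(a)$ (entries $-\omega_{m_0,0} + 2 e_{m_0,0}^2$ on the diagonal and $e_{m_0,0}^2$ off-diagonal), the leading-order reduced matrix on $X_0$ is the $2\times 2$ matrix $a^2 M + \mathcal{O}(a^4)$ with
\begin{equation*}
M = \begin{bmatrix} -\omega_{m_0,0} + 2\langle e_{m_0,0}^2, e_{m_0+1,0}^2\rangle - \Omega_1 & \langle e_{m_0,0}^2, e_{m_0+1,0} e_{m_0-1,0}\rangle \\ \langle e_{m_0,0}^2, e_{m_0+1,0} e_{m_0-1,0}\rangle & -\omega_{m_0,0} + 2\langle e_{m_0,0}^2, e_{m_0-1,0}^2\rangle + (m_0-1)\Omega_1 \end{bmatrix}.
\end{equation*}
A zero eigenvalue of $H_{m_0+1}(a,\Omega)$ at order $a^2$ corresponds to $\det M = 0$, which is a quadratic in $\Omega_1$; I would solve it and identify the relevant root, then evaluate the Hermite--Gauss integrals using \eqref{HGfunctions} (and the three-term structure of products of Hermite functions) to get the explicit coefficient $\frac{(2m_0)!}{4^{m_0} m_0!(m_0+1)!}$ claimed in \eqref{frequency-1}. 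The sign must come out so that $\Omega_{m_0+1,0}(a) < 2$. I also need to confirm that the other root of the quadratic gives the \emph{positive} eigenvalue of $H_{m_0+1}$ mentioned in the surrounding text (that is part of the subsequent analysis, not strictly of this lemma, but the same computation delivers it), and that the corresponding eigenvalue is simple — i.e. the two roots of $\det M = 0$ are distinct, which follows as long as the off-diagonal integral is nonzero and the diagonal entries are not forced equal.

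The main obstacle I expect is twofold. First, the perturbation argument is a genuinely \emph{degenerate} (two-dimensional kernel) perturbation rather than the simple-eigenvalue perturbations used in Lemmas \ref{lemma-1}--\ref{lemma-zero-eig}, so one must be careful that the reduced $2\times 2$ problem really controls both eigenvalues and that the $\mathcal{O}(a^4)$ remainder does not spoil the simplicity of the zero at $\Omega = \Omega_{m_0+1,0}(a)$; the implicit function theorem applied to the reduced determinant equation $\det(a^{-2} H_{m_0+1}(a,\Omega)\big|_{X_0} + \cdots) = 0$ in the variable $\Omega$, once the leading quadratic has a simple root, handles this. Second, the explicit evaluation of $\langle e_{m_0,0}^2, e_{m_0+1,0}^2\rangle$, $\langle e_{m_0,0}^2, e_{m_0-1,0}^2\rangle$, and the cross term $\langle e_{m_0,0}^2, e_{m_0+1,0} e_{m_0-1,0}\rangle$ in closed form is the computational heart; these reduce to Gamma-function integrals $\int_0^\infty r^{2k+1} e^{-2r^2}\,dr = \frac{k!}{2^{k+2}}$ after substituting \eqref{HGfunctions} and \eqref{H-1-functions}, and the algebra should collapse to the stated coefficient. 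I would present the perturbation setup and the determinant condition, then record the integral values and the resulting root, checking the sign to conclude $\Omega_{m_0+1,0}(a) < 2$.
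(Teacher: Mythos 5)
Your route is essentially the paper's own proof: the paper also treats $(a,\Omega)=(0,2)$ as a degenerate point at which $H_{m_0+1}(0,2)$ has a double semi-simple zero eigenvalue spanned by $(e_{m_0+1,0},0)$ and $(0,e_{m_0-1,0})$, performs a two-parameter expansion in powers of $a^2$ with Lyapunov--Schmidt reduction, and closes the problem on a $2\times2$ reduced matrix built from the same Hermite--Gauss integrals; the only cosmetic difference is that the paper moves the matrix $R$ to the other side and diagonalizes a non-symmetric matrix $\tilde A$ with $\tilde\Omega$ as eigenvalue, whereas you keep the symmetric reduced matrix $M(\Omega_1)$ and impose $\det M(\Omega_1)=0$ — the two formulations are equivalent.

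There is, however, a concrete error in the reduced matrix you display. For $m=m_0+1$ the rotation term in $H_m(a,\Omega)=K_m(a)-\Omega(m-m_0)R$ is simply $-\Omega R$ (you wrote $-\Omega(1-m_0)R$), so with $\Omega=2+a^2\Omega_1$ the diagonal of the reduced matrix must carry $-\Omega_1$ and $+\Omega_1$; your second diagonal entry $+(m_0-1)\Omega_1$ is wrong for every $m_0\neq 2$ and would produce a coefficient different from the one in (\ref{frequency-1}). With the corrected matrix, writing $\alpha=-\omega_{m_0,0}+2\langle e_{m_0,0}^2,e_{m_0+1,0}^2\rangle_{L^2_r}=\frac{(2m_0)!}{4^{m_0}(m_0-1)!(m_0+1)!}$, $\gamma=-\omega_{m_0,0}+2\langle e_{m_0,0}^2,e_{m_0-1,0}^2\rangle_{L^2_r}=\frac{(2m_0)!}{4^{m_0}(m_0!)^2}$, and $\beta=\langle e_{m_0,0}^2\,e_{m_0-1,0},e_{m_0+1,0}\rangle_{L^2_r}=\frac{(2m_0)!}{4^{m_0}m_0!\sqrt{(m_0-1)!(m_0+1)!}}$, one finds $\alpha\gamma=\beta^2$, so the quadratic $(\alpha-\Omega_1)(\gamma+\Omega_1)-\beta^2=0$ has the two roots $\Omega_1=0$ and $\Omega_1=\alpha-\gamma=-\frac{(2m_0)!}{4^{m_0}m_0!(m_0+1)!}<0$; the second root gives exactly (\ref{frequency-1}) and the negative sign gives $\Omega_{m_0+1,0}(a)<2$, matching the paper's eigenvalue computation of $\tilde A$. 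Your remark on simplicity is fine and corresponds to the paper's subsequent observation that at $\Omega=\Omega_{m_0+1,0}(a)$ the second small eigenvalue of $H_{m_0+1}$ is positive of order $\mathcal{O}(a^2)$, so the zero eigenvalue is simple. In short, the plan is sound and coincides with the paper's argument once the $R$-coefficient slip is repaired.
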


\begin{proof}
We solve the bifurcation equation (\ref{bif-prob}) for $m = m_{0} + 1$ near
$\Omega= 2$ in powers of $a$. Since $\Omega= 2$ is a double (semi-simple) eigenvalue of the
bifurcation equation (\ref{bif-prob}) at $a = 0$, we use the two-parameter
perturbation theory with the Taylor expansion
\begin{align}
\label{perturbation-1}\left\{
\begin{array}
[c]{lcl}%
V_{m_{0}+1} & = & c_{m_{0}+1} e_{m_{0}+1,0} + a^{2} \tilde{V}_{m_{0}+1} +
\mathcal{O}_{L^2_r}(a^{4}),\\
W_{-m_{0}+1} & = & c_{-m_{0}+1} e_{m_{0}-1,0} + a^{2} \tilde{W}_{-m_{0}+1} +
\mathcal{O}_{L^2_r}(a^{4}),\\
\Omega & = & 2 + a^{2} \tilde{\Omega} + \mathcal{O}(a^{4}),
\end{array}
\right.
\end{align}
where $(c_{m_0+1},c_{-m_0+1}) \neq (0,0)$ are to be determined,
the correction terms $\tilde{V}_{m_{0}+1}$, $\tilde{W}_{-m_{0}+1}$, and
$\tilde{\Omega}$ are $a$-independent, and the reminder terms
are uniquely defined by the Lyapunov--Schmidt reductions.
The admissible values of $(c_{m_0+1},c_{-m_0+1}) \neq (0,0)$ and $\tilde{\Omega
}$ are found from the matrix eigenvalue problem
\begin{align*}
\tilde{A} \left[
\begin{array}
[c]{c}%
c_{m_{0}+1}\\
c_{-m_{0}+1}%
\end{array}
\right]  = \tilde{\Omega} \left[
\begin{array}
[c]{c}%
c_{m_{0}+1}\\
c_{-m_{0}+1}%
\end{array}
\right]  ,
\end{align*}
where
\begin{align*}
\tilde{A}  &  = \left[
\begin{array}
[c]{cc}%
\langle(-\omega_{m_{0},0} + 2 e_{m_{0},0}^{2}) e_{m_{0}+1,0}, e_{m_{0}+1,0}
\rangle_{L^{2}} & \langle e_{m_{0},0}^{2} e_{m_{0}-1,0}, e_{m_{0}+1,0}
\rangle_{L^{2}}\\
-\langle e_{m_{0},0}^{2} e_{m_{0}+1,0}, e_{m_{0}-1,0} \rangle_{L^{2}} &
-\langle(-\omega_{m_{0},0} + 2 e_{m_{0},0}^{2}) e_{m_{0}-1,0}, e_{m_{0}-1,0}
\rangle_{L^{2}}%
\end{array}
\right] \\
&  = \left[
\begin{array}
[c]{cc}%
\frac{(2m_{0})!}{4^{m_{0}} (m_{0}-1)! (m_{0}+1)!} & \frac{(2m_{0})!}{4^{m_{0}}
m_{0}! \sqrt{(m_{0}-1)! (m_{0}+1)!}}\\
-\frac{(2m_{0})!}{4^{m_{0}} m_{0}! \sqrt{(m_{0}-1)! (m_{0}+1)!}} &
-\frac{(2m_{0})!}{4^{m_{0}} (m_{0}!)^{2}}%
\end{array}
\right]  ,
\end{align*}
and we have used the explicit formula (\ref{HGfunctions}). Eigenvalues of $\tilde{A}$
and their normalized eigenvectors are given by
\begin{align}
\label{eigenvalues-1}\tilde{\Omega} = 0 : \quad\left[
\begin{array}
[c]{c}%
c_{m_{0}+1}\\
c_{-m_{0}+1}%
\end{array}
\right]  = \frac{1}{\sqrt{2m_{0}+1}} \left[
\begin{array}
[c]{c}%
\sqrt{m_{0}+1}\\
-\sqrt{m_{0}}%
\end{array}
\right]
\end{align}
and
\begin{align}
\label{eigenvalues-2}\tilde{\Omega} = \tilde{\Omega}_{m_{0}+1,0} :=
-\frac{(2m_{0})!}{4^{m_{0}} m_{0}! (m_{0}+1)!} : \quad\left[
\begin{array}
[c]{c}%
c_{m_{0}+1}\\
c_{-m_{0}+1}%
\end{array}
\right]  = \frac{1}{\sqrt{2m_{0}+1}} \left[
\begin{array}
[c]{c}%
\sqrt{m_{0}}\\
-\sqrt{m_{0}+1}%
\end{array}
\right]  .
\end{align}
Substituting $\tilde{\Omega} = \tilde{\Omega}_{m_{0}+1,0}$ from
(\ref{eigenvalues-2}) to (\ref{perturbation-1}), we obtain the asymptotic
expansion (\ref{frequency-1}). Since $\tilde{\Omega}_{m_{0}+1,0} < 0$ in
(\ref{eigenvalues-2}), we have $\Omega_{m_{0}+1,0}(a) < 2$ for small $a$.
\end{proof}

\begin{remark}
Lemma \ref{lemma-last-bifurcation} yields the existence of constant $C_{m} >
0$ in item (iii) of Theorem \ref{theorem-main}.
\end{remark}

In order to compute eigenvalues of the blocks $H_{m}(a,\Omega_{m_{0}+1,0}(a))$
for small $a$, we write explicitly the following expansion in
powers of $a$:
\begin{align*}
&  \phantom{t} H_{m}(a,\Omega_{m_{0}+1,0}(a)) = \left[
\begin{array}
[c]{cc}%
- \Delta_{m} + r^{2} - 2 (m+1) & 0\\
0 & - \Delta_{m-2m_{0}} + r^{2} + 2(m-2m_{0}-1)
\end{array}
\right] \\
&  \phantom{t} \phantom{t} + a^{2} \left[
\begin{array}
[c]{cc}%
-\omega_{m_{0},0} - (m-m_{0}) \tilde{\Omega}_{m_{0}+1,0} + 2 e_{m_{0},0}%
^{2}(r) & e_{m_{0},0}^{2}(r)\\
e_{m_{0},0}^{2}(r) & -\omega_{m_{0},0} + (m-m_{0}) \tilde{\Omega}_{m_{0}+1,0}
+ 2e_{m_{0},0}^{2}(r)
\end{array}
\right] \\
&  \phantom{t} \phantom{t} + \mathcal{O}(a^{4}).
\end{align*}
We consider now eigenvalues of $H_{m}(a,\Omega_{m_{0}+1,0}(a))$ denoted by
$\lambda$ near zero as $a \to0$. The following three lemmas summarize the
results of computations of the perturbation theory.

\begin{lemma}
There exists $a_{0} > 0$ such that for every $0 < a < a_{0}$, the
block $H_{m_{0}+1}(a,\Omega_{m_{0}+1,0}(a))$ has a simple zero eigenvalue and
a simple positive eigenvalue of the order $\mathcal{O}(a^{2})$, whereas all
other eigenvalues are strictly positive. \label{lemma-3}
\end{lemma}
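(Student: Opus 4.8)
\textbf{Proof proposal for Lemma \ref{lemma-3}.}
The plan is to apply two-parameter perturbation theory to the block $H_{m_{0}+1}(a,\Omega)$ near the point $(a,\Omega)=(0,2)$, exactly as in the proof of Lemma \ref{lemma-last-bifurcation}, and then read off the full near-zero spectrum rather than just the zero eigenvalue. At $a=0$ and $\Omega=2$, the operator $H_{m_{0}+1}(0,2)$ is the diagonal operator with entries $-\Delta_{m_{0}+1}+r^{2}-2(m_{0}+2)$ and $-\Delta_{m_{0}-1}+r^{2}-2m_{0}$, whose kernels are spanned by $e_{m_{0}+1,0}$ and $e_{m_{0}-1,0}$ respectively; all other eigenvalues of $H_{m_{0}+1}(0,2)$ are bounded away from zero (they lie at distance $\geq 4$). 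Hence the near-zero spectrum of $H_{m_{0}+1}(a,\Omega_{m_{0}+1,0}(a))$ for small $a$ consists of exactly two eigenvalues that bifurcate from this two-dimensional kernel, and these are governed at leading order by the $2\times 2$ matrix eigenvalue problem already assembled in the proof of Lemma \ref{lemma-last-bifurcation}: the small eigenvalue $\lambda$ of $H_{m_{0}+1}(a,\Omega)$ satisfies $\lambda = a^{2}(\tilde{\Omega}_{j} - (m_{0}+1-m_{0})\tilde{\Omega})+\mathcal{O}(a^{4})$ evaluated along $\Omega=\Omega_{m_{0}+1,0}(a)$, where $\tilde{\Omega}_{1}=0$ and $\tilde{\Omega}_{2}=\tilde{\Omega}_{m_{0}+1,0}<0$ are the two eigenvalues of $\tilde{A}$ in \eqref{eigenvalues-1}--\eqref{eigenvalues-2}.

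More precisely, I would set $\Omega = 2 + a^{2}\tilde{\Omega}$ with $\tilde{\Omega}=\tilde{\Omega}_{m_{0}+1,0}$ fixed by \eqref{frequency-1}, project the eigenvalue equation $H_{m_{0}+1}(a,\Omega)\mathbf{v}=\lambda \mathbf{v}$ onto the span of $(e_{m_{0}+1,0},0)$ and $(0,e_{m_{0}-1,0})$, and observe that the reduced matrix is $a^{2}(\tilde{A} - \tilde{\Omega}\,\mathrm{diag}(1,-1)) + \mathcal{O}(a^{4})$ — note that the $\Omega$-dependence enters through $-(m-m_{0})R = -R$ for $m=m_{0}+1$, i.e. through $-\tilde{\Omega}\,\mathrm{diag}(1,-1)$, which is precisely the shift used already in forming $\tilde{A}$. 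By construction $\tilde{\Omega}=\tilde{\Omega}_{m_{0}+1,0}$ is an eigenvalue of $\tilde{A}$ with eigenvector \eqref{eigenvalues-2}, so one eigenvalue of the reduced matrix vanishes at $\mathcal{O}(a^{2})$; its $\mathcal{O}(a^{4})$ correction is the genuine simple zero eigenvalue already produced in Lemma \ref{lemma-last-bifurcation} (this is forced, since that zero eigenvalue exists exactly and must be one of these two near-zero eigenvalues). The other eigenvalue of the reduced matrix equals $a^{2}$ times the difference of the two eigenvalues of $\tilde{A}$, namely $a^{2}\bigl(0 - \tilde{\Omega}_{m_{0}+1,0}\bigr) = -a^{2}\tilde{\Omega}_{m_{0}+1,0}>0$ since $\tilde{\Omega}_{m_{0}+1,0}<0$; hence it is a simple positive eigenvalue of order $\mathcal{O}(a^{2})$.

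Finally I would record that all remaining eigenvalues of $H_{m_{0}+1}(a,\Omega_{m_{0}+1,0}(a))$ stay strictly positive: they are $\mathcal{O}(a^{2})$ perturbations of eigenvalues of $H_{m_{0}+1}(0,2)$ that are at least $4$ in absolute value, and among those the negative ones must be excluded — indeed the second diagonal entry $-\Delta_{m_{0}-1}+r^{2}-2m_{0}$ has spectrum $\{0,4,8,\dots\}$ and the first has spectrum $\{2,6,10,\dots\}$, so apart from the single zero there are no nonpositive unperturbed eigenvalues, and self-adjoint regular perturbation theory \cite{Kato} keeps them positive for small $a$. Combining the three contributions gives exactly one simple zero eigenvalue, one simple positive $\mathcal{O}(a^{2})$ eigenvalue, and strictly positive spectrum otherwise.

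The main obstacle is bookkeeping rather than a genuine difficulty: one must verify that the leading-order reduced eigenvalue problem for $H_{m_{0}+1}(a,\Omega)$ — with $\Omega$ slaved to $\Omega_{m_{0}+1,0}(a)$ — is literally the matrix $a^{2}(\tilde{A}-\tilde{\Omega}_{m_{0}+1,0}\,\mathrm{diag}(1,-1))$ up to $\mathcal{O}(a^{4})$, so that the spectral data computed in Lemma \ref{lemma-last-bifurcation} transfer directly, and that the exact zero eigenvalue of Lemma \ref{lemma-last-bifurcation} is correctly identified with the eigenvalue whose $\mathcal{O}(a^{2})$ part vanishes (forcing its higher-order terms to vanish as well). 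Once this identification is in place, the positivity of the second near-zero eigenvalue is immediate from $\tilde{\Omega}_{m_{0}+1,0}<0$, and the positivity of the rest is a standard spectral-gap argument.
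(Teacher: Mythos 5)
Your overall skeleton is the same as the paper's: reduce the spectral problem for $H_{m_0+1}(a,\Omega_{m_0+1,0}(a))$ onto the two-dimensional kernel of $H_{m_0+1}(0,2)$ spanned by $(e_{m_0+1,0},0)$ and $(0,e_{m_0-1,0})$, identify the exact zero eigenvalue from Lemma \ref{lemma-last-bifurcation}, and control the rest of the spectrum by the gap of size $4$. However, the key quantitative step is wrong. The matrix $\tilde A$ of Lemma \ref{lemma-last-bifurcation} is \emph{not} symmetric: it is of the form $R\tilde A_0$, where $\tilde A_0$ is the symmetric projection of the $\mathcal{O}(a^2)$ potential, because there the unknown $\tilde\Omega$ enters as a generalized eigenvalue of the pencil $(\tilde A_0, R)$ with the indefinite weight $R=\mathrm{diag}(1,-1)$. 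The reduced matrix for the $\lambda$--eigenvalue problem at fixed $\Omega=\Omega_{m_0+1,0}(a)$ is the \emph{symmetric} matrix $\tilde A_0-\tilde\Omega_{m_0+1,0}R$ (this is the matrix the paper calls $\tilde A$ in the proof of Lemma \ref{lemma-3}), and its spectrum is not obtained by shifting the generalized eigenvalues $\tilde\Omega_1=0$, $\tilde\Omega_2=\tilde\Omega_{m_0+1,0}$ by $-\tilde\Omega$. Your formula $\lambda_j=a^2(\tilde\Omega_j-\tilde\Omega)+\mathcal{O}(a^4)$ is in fact internally inconsistent: since $\mathrm{tr}\,R=0$, the sum of the two eigenvalues of $\tilde A_0-\tilde\Omega R$ is independent of $\tilde\Omega$, whereas your formula would give the $\tilde\Omega$--dependent sum $\tilde\Omega_1+\tilde\Omega_2-2\tilde\Omega$. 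The correct nonzero eigenvalue is $\mathrm{tr}(\tilde A_0)=\frac{(2m_0+1)!}{4^{m_0}m_0!(m_0+1)!}$, which is $(2m_0+1)$ times your claimed value $-\tilde\Omega_{m_0+1,0}$. So the positivity of the second $\mathcal{O}(a^2)$ eigenvalue — the main new content of this lemma — is not actually established by your argument; it requires computing the symmetric reduced matrix and either diagonalizing it explicitly (as the paper does) or noting that one eigenvalue vanishes and the other equals the trace, which is positive because the diagonal entries of $\tilde A_0$ are positive.

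A smaller slip: at $(a,\Omega)=(0,2)$ both diagonal entries $-\Delta_{m_0+1}+r^2-2(m_0+2)$ and $-\Delta_{m_0-1}+r^2-2m_0$ have spectrum $\{0,4,8,\dots\}$, not $\{2,6,10,\dots\}$ for the first one; there are two unperturbed zero eigenvalues (which is precisely why the two-mode reduction is needed), not one. This does not harm your conclusion about the remaining spectrum, since there are no negative unperturbed eigenvalues and the gap is $4$, but as written it contradicts your own description of the two-dimensional kernel. With the reduced matrix corrected to $a^2(\tilde A_0-\tilde\Omega_{m_0+1,0}R)+\mathcal{O}(a^4)$ and its eigenvalues computed directly, your argument becomes essentially the paper's proof.
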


\begin{proof}
For $m = m_{0}+1$, computations of the perturbation theory similar to the
expansion (\ref{perturbation-1}) are repeated as follows:
\begin{align}
\label{perturbation-1a}\left\{
\begin{array}
[c]{lcl}%
V_{m_{0}+1} & = & c_{m_{0}+1} e_{m_{0}+1,0} + a^{2} \tilde{V}_{m_{0}+1} +
\mathcal{O}_{L^2_r}(a^{4}),\\
W_{-m_{0}+1} & = & c_{-m_{0}+1} e_{m_{0}-1,0} + a^{2} \tilde{W}_{-m_{0}+1} +
\mathcal{O}_{L^2_r}(a^{4}),\\
\lambda & = & a^{2} \tilde{\lambda} + \mathcal{O}(a^{4}).
\end{array}
\right.
\end{align}
The Lyapunov--Schmidt reduction method results now in the matrix eigenvalue problem
\begin{align*}
\tilde{A} \left[
\begin{array}
[c]{c}%
c_{m_{0}+1}\\
c_{-m_{0}+1}%
\end{array}
\right]  = \tilde{\lambda} \left[
\begin{array}
[c]{c}%
c_{m_{0}+1}\\
c_{-m_{0}+1}%
\end{array}
\right]  ,
\end{align*}
where
\begin{align*}
\tilde{A} = \left[
\begin{array}
[c]{cc}%
\frac{(2m_{0})!}{4^{m_{0}} (m_{0}!)^{2}} & \frac{(2m_{0})!}{4^{m_{0}} m_{0}!
\sqrt{(m_{0}-1)! (m_{0}+1)!}}\\
\frac{(2m_{0})!}{4^{m_{0}} m_{0}! \sqrt{(m_{0}-1)! (m_{0}+1)!}} &
\frac{(2m_{0})!}{4^{m_{0}} (m_{0}-1)! (m_{0}+1)!}%
\end{array}
\right]  .
\end{align*}
Eigenvalues of $\mathcal{A}$ and their normalized eigenvectors are given by
\begin{align}
\label{eigenvalues-1a}\tilde{\lambda} = 0 : \quad\left[
\begin{array}
[c]{c}%
c_{m_{0}+1}\\
c_{-m_{0}+1}%
\end{array}
\right]  = \frac{1}{\sqrt{2m_{0}+1}} \left[
\begin{array}
[c]{c}%
\sqrt{m_{0}}\\
-\sqrt{m_{0}+1}%
\end{array}
\right]
\end{align}
and
\begin{align}
\label{eigenvalues-2a}\tilde{\lambda} = \frac{(2m_{0}+1)!}{4^{m_{0}} m_{0}!
(m_{0}+1)!} : \quad\left[
\begin{array}
[c]{c}%
c_{m_{0}+1}\\
c_{-m_{0}+1}%
\end{array}
\right]  = \frac{1}{\sqrt{2m_{0}+1}} \left[
\begin{array}
[c]{c}%
\sqrt{m_{0}+1}\\
\sqrt{m_{0}}%
\end{array}
\right]  .
\end{align}
The zero eigenvalue in (\ref{eigenvalues-1a}) corresponds to the choice
$\Omega= \Omega_{m_{0}+1,0}(a)$ at the bifurcation point. The positive
eigenvalue in (\ref{eigenvalues-2a}) gives the positive eigenvalue
of the order $\mathcal{O}(a^{2})$ in (\ref{perturbation-1a}). The other eigenvalues of
$H_{m_{0}}(0,2)$ are strictly positive and they remain so in $H_{m_{0}%
+1}(a,\Omega_{m_{0}+1,0}(a))$ for small $a$.
\end{proof}

\begin{lemma}
There exists $a_{0} > 0$ such that for every $0 < a < a_{0}$, the
block $H_{m}(a,\Omega_{m_{0}+1,0}(a))$ with $m \geq2m_{0} + 1$ has a simple
positive eigenvalue of the order $\mathcal{O}(a^{2})$, whereas all other
eigenvalues are strictly positive. \label{lemma-4}
\end{lemma}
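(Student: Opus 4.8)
The plan is to mimic the perturbation-theory computation of Lemma \ref{lemma-3}, but now for the blocks $H_m(a,\Omega_{m_0+1,0}(a))$ with $m \geq 2m_0+1$, where the key point is that the unperturbed operator $H_m(0,2)$ is already strictly positive except for a single zero eigenvalue. First I would determine which entry of the diagonal operator $H_m(0,2)$ carries this zero eigenvalue. From the expansion $H_m(a,\Omega_{m_0+1,0}(a)) = H_m(0,2) + \mathcal{O}(a^2)$ displayed just before the lemmas, the first diagonal entry is $-\Delta_m + r^2 - 2(m+1)$, whose eigenvalues are $\lambda_{m,n} - 2(m+1) = 4n > 0$ for all $n \in \mathbb{N}_0$ — wait, for $n=0$ this gives $\lambda_{m,0} - 2(m+1) = 2(m+1) - 2(m+1) = 0$. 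So in fact the \emph{first} diagonal entry has a zero eigenvalue at $n=0$ with eigenfunction $e_{m,0}$. The second diagonal entry is $-\Delta_{m-2m_0} + r^2 + 2(m-2m_0-1)$ with eigenvalues $\lambda_{m-2m_0,n} + 2(m-2m_0-1) = 2(m-2m_0) + 4n + 2(m-2m_0-1) = 4(m-2m_0) + 4n - 2$; for $m \geq 2m_0+1$ this is $\geq 4 + 4n - 2 = 4n + 2 > 0$. Hence $H_m(0,2)$ has exactly the simple zero eigenvalue coming from $(e_{m,0},0)$, and all other eigenvalues are bounded away from zero uniformly in $n$.

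Next I would apply regular (Rellich--Kato) perturbation theory to the simple zero eigenvalue. Since it is simple, it is continued to a simple eigenvalue $\lambda(a) = a^2\tilde\lambda + \mathcal{O}(a^4)$ of $H_m(a,\Omega_{m_0+1,0}(a))$, analytic in $a^2$, with $\tilde\lambda$ obtained by projecting the $\mathcal{O}(a^2)$ perturbation onto the eigenvector $(e_{m,0},0)$. Reading off the $a^2$-term of the expansion before the lemmas, the $(1,1)$-entry of the perturbation is $-\omega_{m_0,0} - (m-m_0)\tilde\Omega_{m_0+1,0} + 2e_{m_0,0}^2(r)$, so
\[
\tilde\lambda = \big\langle \big(-\omega_{m_0,0} - (m-m_0)\tilde\Omega_{m_0+1,0} + 2e_{m_0,0}^2\big) e_{m,0}, e_{m,0}\big\rangle_{L^2_r} = \omega_{m_0,0} - (m-m_0)\tilde\Omega_{m_0+1,0} + 2\langle e_{m_0,0}^2, e_{m,0}^2\rangle_{L^2_r} - 2\omega_{m_0,0},
\]
using $\|e_{m,0}\|_{L^2_r} = 1$; this simplifies to $\tilde\lambda = -\omega_{m_0,0} - (m-m_0)\tilde\Omega_{m_0+1,0} + 2\langle e_{m_0,0}^2, e_{m,0}^2\rangle_{L^2_r}$. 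Since $\tilde\Omega_{m_0+1,0} < 0$ from \eqref{eigenvalues-2} and $m > m_0$, the term $-(m-m_0)\tilde\Omega_{m_0+1,0}$ is strictly positive; the remaining contribution $-\omega_{m_0,0} + 2\langle e_{m_0,0}^2, e_{m,0}^2\rangle_{L^2_r}$ can in principle be negative, so the main task is to show $\tilde\lambda > 0$. I would evaluate the inner products explicitly using the Hermite--Gauss formula \eqref{HGfunctions}: with $e_{m_0,0}^2 = \frac{2}{m_0!} r^{2m_0} e^{-r^2}$ and $e_{m,0}^2 = \frac{2}{m!} r^{2m} e^{-r^2}$, a Gamma-function integral gives $\langle e_{m_0,0}^2, e_{m,0}^2\rangle_{L^2_r} = \frac{(m_0+m)!}{2^{m_0+m} m_0! m!}$ and $\omega_{m_0,0} = \frac{(2m_0)!}{4^{m_0}(m_0!)^2}$, after which $\tilde\lambda > 0$ reduces to an explicit inequality among binomial-type coefficients that I would verify for all $m \geq 2m_0+1$.

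The main obstacle I anticipate is controlling the perturbation uniformly in $n$: I must rule out that some \emph{other} eigenvalue of $H_m(0,2)$ — i.e. one of the $4n$ or $4n+2$ eigenvalues with $n$ large — drifts down through zero under the $\mathcal{O}(a^2)$ perturbation. This is handled because the perturbation potential $-\omega_{m_0,0} \pm (m-m_0)\tilde\Omega_{m_0+1,0} + 2e_{m_0,0}^2(r)$ together with the $\mathcal{O}(a^4)$ correction is a bounded multiplication operator on $\mathbb{R}^+$ with $a$-independent $L^\infty$ bound (as already noted in the text for $K_m(a)$), so all eigenvalues move by at most $Ca^2$; choosing $a_0$ small enough that $Ca^2 < 1$ keeps every eigenvalue except the distinguished one strictly positive, and keeps the distinguished one within the range where its Taylor expansion and the sign of $\tilde\lambda$ are valid. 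Combining: for $0 < a < a_0$, $H_m(a,\Omega_{m_0+1,0}(a))$ with $m \geq 2m_0+1$ has exactly one $\mathcal{O}(a^2)$ eigenvalue, which is positive by the sign of $\tilde\lambda$, and all others strictly positive, which is the claim.
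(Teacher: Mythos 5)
Your proposal follows essentially the same route as the paper's proof: you identify the simple zero eigenvalue of $H_m(0,2)$ in the first diagonal entry with eigenfunction $e_{m,0}$, and regular perturbation theory with the projection onto $(e_{m,0},0)$ gives $\tilde\lambda = -\omega_{m_0,0} - (m-m_0)\tilde{\Omega}_{m_0+1,0} + 2\langle e_{m_0,0}^2, e_{m,0}^2\rangle_{L^2_r}$, which is exactly the paper's formula. The binomial inequality you deferred closes in one line, just as in the paper: $-\omega_{m_0,0}-(m-m_0)\tilde{\Omega}_{m_0+1,0} = \frac{(2m_0)!\,(m-2m_0-1)}{4^{m_0}\,m_0!\,(m_0+1)!}\geq 0$ for $m\geq 2m_0+1$, while $2\langle e_{m_0,0}^2,e_{m,0}^2\rangle_{L^2_r} = \frac{2\,(m_0+m)!}{2^{m_0+m}\,m_0!\,m!}>0$, so $\tilde\lambda>0$.
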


\begin{proof}
For $m \geq2m_{0}+1$, the zero eigenvalue of $H_{m}(0,2)$ is simple and all
other eigenvalues are strictly positive. The one-parameter perturbation
expansion for the small eigenvalue is developed as follows:
\begin{align}
\label{perturbation-1b}\left\{
\begin{array}
[c]{lcl}%
V_{m} & = & c_{m} e_{m,0} + a^{2} \tilde{V}_{m} + \mathcal{O}_{L^2_r}(a^{4}),\\
W_{m-2m_{0}} & = & a^{2} \tilde{W}_{m-2m_{0}} + \mathcal{O}_{L^2_r}(a^{4}),\\
\lambda & = & a^{2} \tilde{\lambda} + \mathcal{O}(a^{4}).
\end{array}
\right.
\end{align}
The projection condition yields the only eigenvalue given by
\begin{align}
\tilde{\lambda}  &  = 2 \langle e_{m_{0},0}^{2} e_{m,0}, e_{m,0}
\rangle_{L^2_r} -\omega_{m_{0},0} - (m-m_{0}) \tilde{\Omega}_{m_{0},0}\nonumber\\
&  = \frac{2 (m_{0}+m)!}{2^{m_{0}+m} m_{0}! m!} + \frac{(2m_{0})!
(m-2m_{0}-1)}{4^{m_{0}} m_{0}! (m_{0}+1)!} > 0. \label{eigenvalues-1b}%
\end{align}
Since $\tilde{\lambda} > 0$, the expansion (\ref{perturbation-1b}) yields the
positive eigenvalue of the order $\mathcal{O}(a^{2})$ in the block
$H_{m}(a,\Omega_{m_{0}+1,0}(a))$ for small $a$.
\end{proof}

\vspace{0.25cm}

It remains to consider the blocks $H_{m}(a,\Omega_{m_{0}+1,0}(a))$
for $m_{0}+2 \leq m \leq2m_{0}$. Before continuing with the technical details,
we note the example of $m_{0} = 2$. The results of \cite{Kap1,Kollar} imply
that no real eigenvalues exist in the neighborhood of $\Omega= 2$ and $a = 0$
among eigenvalues of the bifurcation equation (\ref{bif-prob}) for $m = 4 =
2m_{0}$. This is due to oscillatory instability of the radially symmetric
vortex of charge two $(m_{0}=2)$, which arises in the small-amplitude limit of
the primary branch. See Remark 6.9 in \cite{Kap1}. More general results were
obtained in \cite{Hani}, see Proposition 8.3, where all vortices with $m_{0}
\geq2$ were found unstable but the number of unstable modes is smaller 
than $m_{0}-1$ if $m_0$ is sufficiently large. The following result is in
agreement with the outcomes of the stability computations in \cite{Hani,Kap1}.

\begin{lemma}
Let $2 \leq m_{0} \leq16$. There exists $a_{0} > 0$ such that for every $0 < a
< a_{0}$, the block $H_{m}(a,\Omega_{m_{0}+1,0}(a))$ with $m_{0}+2
\leq m \leq2m_{0}$ has two small eigenvalues of the order $\mathcal{O}(a^{2})$
(one is positive and the other one is negative), whereas all other eigenvalues
are strictly positive. \label{lemma-5}
\end{lemma}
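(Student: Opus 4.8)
The plan is to run a two-dimensional degenerate perturbation argument around the double zero eigenvalue of the unperturbed block $H_m(0,2)$, and then to reduce the whole statement to the sign of a single $2\times2$ determinant that can be written in closed form.

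First I would pin down the leading-order operator. From the expansion of $H_m(a,\Omega_{m_0+1,0}(a))$ in powers of $a$ displayed just above the lemma, the $a=0$ term is the diagonal operator with entries $-\Delta_m+r^2-2(m+1)$ and $-\Delta_{m-2m_0}+r^2+2(m-2m_0-1)$. Since $\lambda_{m,n}=2(|m|+2n+1)$ and $m_0+2\le m\le 2m_0$ forces $|m-2m_0|=2m_0-m$, both entries have eigenvalues $\{4n\}_{n\in\mathbb{N}_0}$, the first with eigenfunctions $e_{m,n}$ and the second with eigenfunctions $e_{2m_0-m,n}$. Hence $H_m(0,2)$ has a semisimple zero eigenvalue of multiplicity two with eigenspace $\mathrm{span}\{(e_{m,0},0),(0,e_{2m_0-m,0})\}$, while every other eigenvalue equals some $4n$ with $n\ge1$ and is thus bounded away from $0$. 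Because $H_m(a,\Omega_{m_0+1,0}(a))$ is self-adjoint and its $\mathcal{O}(a^2)$ correction is multiplication by a bounded radial potential, the regular perturbation theory of \cite{Kato} applies: the clusters near $4n$, $n\ge1$, stay strictly positive for all small $a$, and the double zero splits into exactly two eigenvalues of order $\mathcal{O}(a^2)$.

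Next I would compute those two small eigenvalues by a Lyapunov--Schmidt reduction. Writing, in analogy with (\ref{perturbation-1a}),
\[
V_m=c_m e_{m,0}+a^2\tilde V_m+\mathcal{O}_{L^2_r}(a^4),\qquad W_{m-2m_0}=c_{m-2m_0}e_{2m_0-m,0}+a^2\tilde W_{m-2m_0}+\mathcal{O}_{L^2_r}(a^4),\qquad \lambda=a^2\tilde\lambda+\mathcal{O}(a^4),
\]
the solvability condition at order $a^2$ is a matrix eigenvalue problem for the real symmetric matrix $\tilde A_m$ with diagonal entries $\alpha_m,\gamma_m$ and off-diagonal entry $\beta_m$, namely
\[
\alpha_m=-\omega_{m_0,0}-(m-m_0)\tilde\Omega_{m_0+1,0}+2\langle e_{m_0,0}^2,e_{m,0}^2\rangle_{L^2_r},\qquad \gamma_m=-\omega_{m_0,0}+(m-m_0)\tilde\Omega_{m_0+1,0}+2\langle e_{m_0,0}^2,e_{2m_0-m,0}^2\rangle_{L^2_r},
\]
\[
\beta_m=\langle e_{m_0,0}^2\,e_{m,0},\,e_{2m_0-m,0}\rangle_{L^2_r},
\]
with $\omega_{m_0,0}$ taken from (\ref{HGvalues}) and $\tilde\Omega_{m_0+1,0}$ from (\ref{eigenvalues-2}). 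Using the explicit Hermite--Gauss profiles (\ref{HGfunctions}) together with the Gaussian moments $\int_0^\infty r^{2k+1}e^{-2r^2}\,dr=k!/2^{k+2}$, all three entries become closed-form rational numbers in $m_0$ and $m$; in particular $\beta_m=(2m_0)!/(4^{m_0}m_0!\sqrt{m!\,(2m_0-m)!})$, since the $r$-powers in its integrand collapse to $r^{4m_0}$ over the whole range of $m$.

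Finally, a real symmetric $2\times2$ matrix has one positive and one negative eigenvalue precisely when its determinant is negative, and in that case the two small eigenvalues of $H_m(a,\Omega_{m_0+1,0}(a))$ are $a^2\tilde\lambda_\pm+\mathcal{O}(a^4)$ with $\tilde\lambda_+>0>\tilde\lambda_-$, which is the assertion of the lemma. Thus everything reduces to verifying the scalar inequality $\alpha_m\gamma_m-\beta_m^2<0$ for every integer $m_0$ with $2\le m_0\le16$ and every $m$ with $m_0+2\le m\le 2m_0$ --- a finite table of rationals. I expect this to be the main obstacle. For a good many pairs $(m_0,m)$ the diagonal entries already have opposite signs (for instance $\gamma_m<0<\alpha_m$ at $m=2m_0$), so the inequality is automatic; the delicate pairs are those in the ``middle'' of the range, where $\alpha_m,\gamma_m>0$ and one must establish $\beta_m^2>\alpha_m\gamma_m$ directly. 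It is precisely this sub-case that forces the restriction $m_0\le16$: for larger $m_0$ one loses the guarantee that the off-diagonal coupling dominates, and a failure would yield two positive small eigenvalues instead --- a further reduction of the number of unstable modes, in agreement with Proposition~8.3 of \cite{Hani}.
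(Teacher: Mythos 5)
Your proposal follows essentially the paper's own proof: the same two-parameter degenerate perturbation expansion and Lyapunov--Schmidt reduction to the same symmetric $2\times2$ matrix (your $\alpha_m$, $\gamma_m$, $\beta_m$ coincide exactly with the entries of $\tilde{A}$ in the paper), with the conclusion reduced to a finite sign verification, which the paper settles by observing (explicitly for $m_0=2,3$ and numerically for all $2\le m_0\le16$) that the two diagonal entries have opposite signs, so your determinant criterion $\alpha_m\gamma_m-\beta_m^2<0$ holds automatically and the ``delicate'' cases with $\alpha_m,\gamma_m>0$ you anticipate do not actually occur in this range. One peripheral inaccuracy: for $m_0\ge17$ the paper reports that the failure consists of two \emph{negative} eigenvalues of $\tilde{A}$ for some $m$, not two positive ones as you speculate, but this lies outside the lemma's range and does not affect your argument.
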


\begin{proof}
For $m_{0}+2 \leq m \leq2m_{0}$, the zero eigenvalue of $H_{m}(0,2)$ is double
and all other eigenvalues are strictly positive. The two-parameter
perturbation expansion for the small eigenvalue is developed as follows:
\begin{align}
\label{perturbation-1c}\left\{
\begin{array}
[c]{lcl}%
V_{n} & = & c_{m} e_{m,0} + a^{2} \tilde{V}_{m} + \mathcal{O}_{L^2_r}(a^{4}),\\
W_{m-2m_{0}} & = & c_{m-2m_{0}} e_{2m_{0}-m,0} + a^{2} \tilde{W}_{m-2m_{0}} +
\mathcal{O}_{L^2_r}(a^{4}),\\
\lambda & = & a^{2} \tilde{\lambda} + \mathcal{O}(a^{4}).
\end{array}
\right.
\end{align}
The Lyapunov--Schmidt reduction method results in the matrix eigenvalue problem
\begin{align}
\label{matrix-A-tilde}
\tilde{A} \left[
\begin{array}
[c]{c}%
c_{m}\\
c_{m-2m_{0}}%
\end{array}
\right]  = \tilde{\lambda} \left[
\begin{array}
[c]{c}%
c_{m}\\
c_{m-2m_{0}}%
\end{array}
\right]  ,
\end{align}
where
\begin{align*}
\tilde{A} = \left[
\begin{array}
[c]{cc}%
\frac{2 (m_{0}+m)!}{2^{m_{0}+m} m_{0}! m!} + \frac{(2m_{0})! (m-2m_{0}%
-1)}{4^{m_{0}} m_{0}! (m_{0}+1)!} & \frac{(2m_{0})!}{4^{m_{0}} m_{0}! \sqrt{m!
(2m_{0}-m)!}}\\
\frac{(2m_{0})!}{4^{m_{0}} m_{0}! \sqrt{m! (2m_{0}-m)!}} & \frac{2
(3m_{0}-m)!}{2^{3m_{0}-m} m_{0}! (2m_{0}-m)!} - \frac{(2m_{0})! (m+1)}%
{4^{m_{0}} m_{0}! (m_{0}+1)!}%
\end{array}
\right]  .
\end{align*}
For $m_{0} = 2$ (with $m = 4$) and $m_{0} = 3$ (with $m = 5,6$),
the entries of $\tilde{A}$ are computed explicitly. Since the first diagonal entry is
positive and the second diagonal entry is negative, $\tilde{A}$ has
one positive and one negative eigenvalue $\tilde{\lambda}$. We have checked
numerically that this property remains true for every $2 \leq m_{0} \leq16$.
The expansion (\ref{perturbation-1c}) yields one positive and one negative
eigenvalue of the order $\mathcal{O}(a^{2})$ in the block
$H_{m}(a,\Omega_{m_{0}+1,0}(a))$ for small $a$.
\end{proof}

\begin{remark}
For $m_{0} \geq17$, the matrix $\tilde{A}$ for some $m$ in the range $m_{0}+2
\leq m \leq2m_{0}$ has two negative eigenvalues and the number of such
$m$-values grows with the number $m_{0}$. No zero eigenvalues of $\tilde{A}$
are found numerically for at least $m_{0} \leq100$.
\label{remark-computations-17}
\end{remark}

The following proposition summarizes the previous computations of the
perturbation theory. The corresponding result is needed for the bifurcation
analysis in Section 4.

\begin{proposition}
For every integer $1\leq m_{0}\leq16$, there exists $a_{0}>0$, $C_{m_{0}}%
\in(0,|\tilde{\Omega}_{m_{0+1},0}|)$, and $E_{m_{0}}>0$ such that for every
$0<a<a_{0}$, $|\Omega-\Omega_{m_{0}+1,0}(a)|< C_{m_{0}}a^{2}$, and every
$m\geq m_{0}+2$, the operator $H_{m}(a,\Omega)$ is invertible in $L_{r}%
^{2}(\mathbb{R}^{+})$ with the bound
\begin{equation}
\Vert H_{m}(a,\Omega)^{-1}\Vert_{L_{r}^{2}\rightarrow H_{r}^{2}\cap
L_{r}^{2,2}}\leq E_{m_{0}}a^{-2},\quad m\geq m_{0}+2. \label{bound-loss-a-2}%
\end{equation}
Moreover, all eigenvalues of $H_{m}(a,\Omega)$ are strictly positive, except
for $m_{0}-1$ simple negative eigenvalues, which correspond to $m_{0}+2\leq m\leq2m_{0}$. On
the other hand, all eigenvalues of $H_{m_{0}+1}(a,\Omega)$ are strictly
positive except one simple eigenvalue, which is negative for $\Omega
\lesssim\Omega_{m_{0}+1,0}(a)$ and positive for $\Omega\gtrsim\Omega
_{m_{0}+1,0}(a)$. \label{proposition-last-bifurcation}
\end{proposition}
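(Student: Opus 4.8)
The plan is to assemble the per-block perturbation computations of Lemmas~\ref{lemma-3}, \ref{lemma-4} and \ref{lemma-5}, all performed at the single frequency $\Omega=\Omega_{m_{0}+1,0}(a)$, into a statement that is uniform in $m\geq m_{0}+2$ and valid on the whole $\mathcal{O}(a^{2})$-wide window $|\Omega-\Omega_{m_{0}+1,0}(a)|<C_{m_{0}}a^{2}$. The constant $C_{m_{0}}$ must satisfy $C_{m_{0}}<|\tilde{\Omega}_{m_{0}+1,0}|$: since $\Omega_{m_{0}+1,0}(a)=2+a^{2}\tilde{\Omega}_{m_{0}+1,0}+\mathcal{O}(a^{4})$ by (\ref{frequency-1}) with $\tilde{\Omega}_{m_{0}+1,0}<0$ from (\ref{eigenvalues-2}), this keeps the window strictly below $\Omega=2$, with $2-\Omega\geq(|\tilde{\Omega}_{m_{0}+1,0}|-C_{m_{0}})a^{2}>0$ throughout for small $a$. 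The key consequence I would record first is that, by (\ref{operator-leading-order}), the first diagonal entry $L_{+}$ of $H_{m}(0,\Omega)$ has eigenvalues $(2-\Omega)(m-m_{0})+4n$, hence is positive definite with least eigenvalue $\gtrsim a^{2}(m-m_{0})$, uniformly in $m>m_{0}$ and in $\Omega$ in the window.

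\emph{Blocks with $m\geq 2m_{0}+1$.} Here the second diagonal entry $L_{-}$ has eigenvalues $4(m-2m_{0})+4n-(2-\Omega)(m-m_{0})$, which, using $(m-m_{0})\leq(1+m_{0})(m-2m_{0})$, are $\geq(m-2m_{0})\bigl(4-(1+m_{0})|\tilde{\Omega}_{m_{0}+1,0}|a^{2}\bigr)\geq 3$ for $a$ small, uniformly in $m$. Treating $H_{m}(a,\Omega)$ as a $2\times 2$ block operator with these two positive-definite diagonal blocks and off-diagonal blocks of norm $\mathcal{O}(a^{2})$, Young's inequality on the cross terms shows $H_{m}(a,\Omega)$ is positive definite with least eigenvalue $\gtrsim\min\bigl(a^{2}(m-m_{0}),1\bigr)\gtrsim a^{2}$, and Lemma~\ref{lemma-4} identifies the corresponding $\mathcal{O}(a^{2})$ eigenvalue and confirms its positivity. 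This gives $\|H_{m}(a,\Omega)^{-1}\|_{L^{2}_{r}\to L^{2}_{r}}\lesssim(a^{2}(m-m_{0}))^{-1}\lesssim a^{-2}$, and the upgrade to the $H^{2}_{r}\cap L^{2,2}_{r}$ norm in (\ref{bound-loss-a-2}) is the routine elliptic estimate for the shifted radial harmonic oscillator, as in the proof of Proposition~\ref{proposition-secondary-bifurcations}.

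\emph{The finitely many blocks $m_{0}+1\leq m\leq 2m_{0}$.} At $\Omega=\Omega_{m_{0}+1,0}(a)$, Lemma~\ref{lemma-3} gives the spectrum of $H_{m_{0}+1}$ (a simple zero eigenvalue, a simple positive $\mathcal{O}(a^{2})$ eigenvalue, all others strictly positive) and, for $2\leq m_{0}\leq 16$, Lemma~\ref{lemma-5} gives that each $H_{m}$ with $m_{0}+2\leq m\leq 2m_{0}$ has exactly one positive and one negative $\mathcal{O}(a^{2})$ eigenvalue with all others strictly positive, producing exactly the $m_{0}-1$ negative eigenvalues claimed; the ceiling $m_{0}\leq 16$ is precisely the range in which the matrix $\tilde{A}$ of Lemma~\ref{lemma-5} has one positive and one negative eigenvalue, cf.\ Remark~\ref{remark-computations-17}. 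To move off the bifurcation point I would use (\ref{derivative-H}), writing $H_{m}(a,\Omega)=H_{m}(a,\Omega_{m_{0}+1,0}(a))-(\Omega-\Omega_{m_{0}+1,0}(a))(m-m_{0})R$, so by Weyl's inequality each eigenvalue moves by at most $(m-m_{0})C_{m_{0}}a^{2}$ across the window: the $\mathcal{O}(1)$ eigenvalues stay bounded away from zero, and the $\mathcal{O}(a^{2})$ eigenvalues, whose leading coefficients are the nonzero eigenvalues of the matrices $\tilde{A}$ in Lemmas~\ref{lemma-3}--\ref{lemma-5}, keep their signs once $C_{m_{0}}$ is taken small enough relative to these finitely many coefficients. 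For $m=m_{0}+1$ the eigenvalue vanishing at $\Omega=\Omega_{m_{0}+1,0}(a)$ has $\Omega$-derivative $-S_{m_{0}+1}$ with $S_{m_{0}+1}$ as in (\ref{krein-sign}), and from the eigenvector (\ref{eigenvalues-1a}) one gets $S_{m_{0}+1}=\tfrac{m_{0}}{2m_{0}+1}-\tfrac{m_{0}+1}{2m_{0}+1}+\mathcal{O}(a^{2})=-\tfrac{1}{2m_{0}+1}+\mathcal{O}(a^{2})<0$, so this eigenvalue is strictly increasing in $\Omega$, negative for $\Omega\lesssim\Omega_{m_{0}+1,0}(a)$ and positive for $\Omega\gtrsim\Omega_{m_{0}+1,0}(a)$. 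For these $m$ the least-modulus eigenvalue is $\mathcal{O}(a^{2})$ but $\gtrsim a^{2}$ in absolute value and the shift is $\mathcal{O}(1)$, so (\ref{bound-loss-a-2}) follows as before; taking the smallest $a_{0}$ and $C_{m_{0}}$ and the largest $E_{m_{0}}$ over the finitely many blocks closes the argument.

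\emph{Main obstacle.} The only genuine difficulty is uniformity as $m\to\infty$: both $\partial_{\Omega}H_{m}=-(m-m_{0})R$ and the shift of $H_{m}$ relative to $-\Delta_{m}+r^{2}$ grow linearly in $m$, so a naive perturbation argument would degrade the resolvent constant $E_{m_{0}}$. The resolution is exactly the bookkeeping above: the least eigenvalue of $L_{+}$ also grows linearly in $m$ and stays positive \emph{because} $C_{m_{0}}<|\tilde{\Omega}_{m_{0}+1,0}|$, while $L_{-}$ is bounded below by a positive constant for $m\geq 2m_{0}+1$, and the improved $L^{2}$-resolvent bound $\lesssim(a^{2}(m-m_{0}))^{-1}$ for large $m$ compensates the $\mathcal{O}(m)$ shift so that $E_{m_{0}}$ can be chosen independently of $m$. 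The remaining point, ensuring that the finitely many $\mathcal{O}(a^{2})$ eigenvalues do not change sign across an $\mathcal{O}(a^{2})$-wide window, is harmless since $C_{m_{0}}$ may be shrunk as needed.
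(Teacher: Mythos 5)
Your proposal is correct and follows essentially the same route as the paper: the spectra of the blocks at $\Omega=\Omega_{m_{0}+1,0}(a)$ are taken from Lemmas \ref{lemma-3}--\ref{lemma-5}, the $m$-independent constant $E_{m_{0}}$ comes from the fact that the $\mathcal{O}(a^{2})$ eigenvalue in (\ref{eigenvalues-1b}) grows linearly in $m$ and stays bounded away from zero, and the sign change of the $H_{m_{0}+1}$ eigenvalue follows from $\partial_{\Omega}H_{m}=-(m-m_{0})R$ together with the Krein quantity $S_{m_{0}+1}=-\tfrac{1}{2m_{0}+1}+\mathcal{O}(a^{2})<0$ computed from (\ref{eigenvalues-1a}), exactly as in the paper. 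Your additional bookkeeping (the direct positivity estimate for $L_{\pm}$ when $m\geq 2m_{0}+1$ using $C_{m_{0}}<|\tilde{\Omega}_{m_{0}+1,0}|$, and the Weyl-type control of the eigenvalue shift across the $\mathcal{O}(a^{2})$-wide window) only makes explicit the uniformity in $m$ and in $\Omega$ that the paper leaves implicit.
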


\begin{proof}
Eigenvalues and invertibility of $H_{m}(a,\Omega_{m_{0}+1,0}(a))$ for $m \geq
m_{0} + 2$ with the bound (\ref{bound-loss-a-2}) follows from the outcomes of
the perturbation theory in Lemmas \ref{lemma-3}, \ref{lemma-4}, and
\ref{lemma-5}, where the $m$-independent constant $E_{m_{0}}$ exists thanks to
the fact that the $\mathcal{O}(a^{2})$ positive eigenvalue in
(\ref{eigenvalues-1b}) is bounded away from zero.

It remains to prove that the zero eigenvalue of $H_{m_{0}+1}(a,\Omega
_{m_{0}+1,0}(a))$ becomes a small positive eigenvalue of $H_{m_{0}+1}%
(a,\Omega)$ for $\Omega\gtrsim\Omega_{m_{0}+1,0}(a)$ and a small negative
eigenvalue of $H_{m_{0}+1}(a,\Omega)$ for $\Omega\lesssim\Omega_{m_{0}%
+1,0}(a)$. This follows from the derivative (\ref{derivative-H}) and the Krein
signature of the zero eigenvalue of $H_{m_{0}+1}(a,\Omega_{m_{0}+1,0}(a))$
defined by (\ref{krein-sign}). We obtain from the expansion
(\ref{perturbation-1a})
\begin{equation}
\label{Krein-m0-1}S_{m_{0}+1} = \| V_{m_{0}+1} \|_{L^{2}_{r}}^{2} - \|
W_{-m_{0}+1} \|^{2}_{L^{2}_{r}} = c_{m_{0}+1}^{2} - c_{-m_{0}+1}^{2} +
\mathcal{O}(a^{2}),
\end{equation}
where $(c_{m_{0}+1},c_{-m_{0}+1})$ is given by the eigenvector of $\tilde{A}$
that corresponds to $\tilde{\lambda} = 0$. From (\ref{eigenvalues-1a}), we
obtain $S_{m_{0}+1} < 0$, hence the corresponding eigenvalue of $H_{m_{0}%
+1}(a,\Omega)$ is an increasing\footnote{If $(c_{m_{0}+1},c_{-m_{0}+1})$ in
(\ref{Krein-m0-1}) is given by the other eigenvector of $\tilde{A}$ that
corresponds to $\tilde{\lambda} > 0$, then it follows from
(\ref{eigenvalues-2a}) that $S_{m_{0}+1} > 0$. Hence, the corresponding small
positive eigenvalue of $H_{m_{0}+1}(a,\Omega)$ is a decreasing function of
$\Omega$. Nevertheless, for $\Omega\gtrsim\Omega
_{m_{0}+1,0}(a)$, these two small eigenvalues of $H_{m_{0}+1}(a,\Omega)$ are
ranged in the same order of $\mathcal{O}(a^{2})$ as at $\Omega= \Omega
_{m_{0}+1,0}(a)$.} function of $\Omega$.
\end{proof}

\begin{remark}
Propositions \ref{proposition-secondary-bifurcations} and
\ref{proposition-last-bifurcation} complete the proof of item (iii) in Theorem
\ref{theorem-main}.
\end{remark}

\begin{remark}
Eigenvalues $\lambda := \Omega (m-m_0)$ of the bifurcation problem
(\ref{bif-prob}) near $\lambda_{m,m_0} = 2(m-m_0)$ are either complex or real for $m_{0}+2 \leq m \leq2m_{0}$,
depending on whether the $m$-th mode of the $m_{0}$-th vortex is spectrally
unstable or stable. When all such eigenvalues are complex, which happens for
$1 \leq m_{0} \leq3$, no other bifurcation curve is connected to the point
$\Omega_{0} = 2$ from below, besides the curve $\Omega_{m_{0}+1,0}$. When
$m_{0} \geq4$, we have found that there are $R(m_{0})$ pairs of real eigenvalues
$\lambda$ of the bifurcation problem (\ref{bif-prob}) near $\lambda_{m,m_0}$,
e.g.
\begin{itemize}
\item $R(4) = 1$ with $m = 8$;

\item $R(5) = 1$ with $m = 10$;

\item $R(6) = 2$ with $m = 11,12$;

\item $R(7) = 3$ with $m = 12,13,14$;

\item $R(8) = 3$ with $m = 14,15,16$;
\end{itemize}
and so on. This finding corresponds to the result of Proposition 8.3 in \cite{Hani} 
where the number of complex eigenvalues is found to be smaller than $m_0 - 1$ 
if $m_0$ is sufficiently large. If $R(m_0) \neq 0$, 
then there exist $R(m_{0})$ bifurcation curves connected
to the point $\Omega_{0} = 2$ from below. As follows from the count on
negative eigenvalues in
\[
N(m_{0}) - B(m_{0}) - 1 = m_{0} - 1,
\]
which coincides with the number of negative eigenvalues in Lemma
\ref{lemma-5}, these additional bifurcation curves for $4 \leq m_{0} \leq16$
are located above the curve $\Omega_{m_{0}+1,0}$. However, for $m_{0} \geq17$, 
thanks to the computations in Remark \ref{remark-computations-17}, 
some of the positive eigenvalues of $H_{m}(a,\Omega)$ for $m_{0} + 2 \leq m \leq2 m_{0}$ become negative
eigenvalues for $\Omega\lesssim\Omega_{m_{0}+1,0}(a)$ and the total number of
negative eigenvalues at $\Omega\gtrsim\Omega_{m_{0}+1,0}(a)$ exceeds $m_{0} -
1$. Therefore, some of the $R(m_{0})$ bifurcation curves are located below the curve
$\Omega_{m_{0}+1,0}$ for $m_0 \geq 17$.
\end{remark}

\section{Secondary branches of multi-vortex solutions}

Recall that the solution $U$ to the stationary GP equation (\ref{EcStat}) is a
critical point of the energy functional $E_{\mu}(u)$ in (\ref{Lyapunov}),
therefore, the bifurcation problem for $g(v;a,\Omega)$ in
(\ref{g-function})--(\ref{operator-g}) has a variational structure. The number of negative
eigenvalues of the Jacobian operator $\mathcal{H(}a,\Omega)$ in
(\ref{Hessian-definition})--(\ref{Hessian}) (which is known as \emph{the Morse index})
changes at every bifurcation curve as $\Omega$ crosses
$\Omega_{m,n}(a)$, according to Propositions
\ref{proposition-secondary-bifurcations} and
\ref{proposition-last-bifurcation}, where the values of $\Omega_{m,n}(a)$ are
given by Lemmas \ref{lemma-count-B} and \ref{lemma-last-bifurcation}, see
equations (\ref{bifurcation-point}) and (\ref{frequency-1}).

Here we prove that for each fixed $a$ and for each non-resonant bifurcation
point, there is a continuous branch of solutions of $g(v;a,\Omega)$
bifurcating from $(0;a,\Omega_{m,n}(a))$ on one side of the bifurcation point
$\Omega=\Omega_{m,n}(a)$. The new family of multi-vortex solutions
is parameterized by two parameters $(a,\Omega)$.

Besides proving the local bifurcation result, we discuss symmetries of the
bifurcating branches and their global continuation with respect to parameter
$\Omega$. For definitions and methods used to prove the equivariant bifurcation we refer to \cite{BaKr10,GoSc86,IzVi03}.

In section 4.1, symmetries of $g(v;a,\Omega)$, in particular, its
equivariant properties are analyzed. In section 4.2, we prove the local
bifurcation result for a non-resonant bifurcation point $\Omega_{m,n}(a)$,
with a simple zero eigenvalue of $\mathcal{H(}a,\Omega)$. We also discuss
symmetries and asymptotic estimates of the bifurcating branches, which are
needed to study the location of the individual vortices in the multi-vortex
configurations. In section 4.3, we prove the global continuation of the
solution branches.

\subsection{Symmetries and equivariance of $g(v;a,\Omega)$}

We define the action of the group $O(2)=S^{1}\cup\kappa S^{1}$ by
\begin{equation}
\rho(\varphi)v(r,\theta)=e^{-im_{0}\varphi}v(r,\theta+\varphi),\qquad
\rho(\kappa)v(r,\theta)=\bar{v}(r,-\theta)\text{.} \label{v}%
\end{equation}
The operator $g(v;a,\Omega)$ given by (\ref{g-function})--(\ref{operator-g}) is $O(2)$-equivariant
by the action of the group given by (\ref{v}). That is, we have $g(\rho
(\varphi)v)=\rho(\varphi)g(v)$ since
\begin{align*}
e^{im_{0}\varphi}g(\rho(\varphi)v)(r,\theta-\varphi)  &  =\left(  -\omega
_{m}(a) -\Delta_{(r,\theta)} + r^{2} + \Omega i(\partial_{\theta}%
-im_{0})\right)  v(r,\theta) -e^{im_{0}\theta}\psi_{m_{0}}(r;a)^{3}\\
&  +\left\vert e^{im_{0} \theta}\psi_{m_{0}}(r;a)+v(r,\theta)\right\vert ^{2}
\left(  e^{im_{0} \theta} \psi_{m_{0}}(r;a) +v(r,\theta)\right)  .
\end{align*}
Similarly, we have $g(\rho(\kappa)v)=\rho(\kappa)g(v)$.

As is explained in Section 2.2, the component $v$ is extended to the vector
$\mathbf{v}=(v,w)$ with the constraint $w=\bar{v}$, so that the root finding
problem is formulated for the analytic nonlinear operator $\mathbf{g}%
(\mathbf{v})=(g(v,w),\bar{g}(v,w))$. The natural extension of the action of
the group $O(2)$ to the second component of ${\bf v} = (v,w)$ is
\begin{equation}
\rho(\varphi)w(r,\theta)=e^{im_{0}\varphi}w(r,\theta+\varphi),\qquad
\rho(\kappa)w(r,\theta)=\bar{w}(r,-\theta)\text{.} \label{w}%
\end{equation}
In the Fourier basis
\[
v=\sum_{m\in\mathbb{Z}}V_{m}(r)e^{im\theta},\qquad w=\sum_{m\in\mathbb{Z}%
}W_{m}(r)e^{im\theta},
\]
the action of the group $O(2)=S^{1}\cup\kappa S^{1}$ is given by
\begin{align*}
\rho(\varphi)V_{m}  &  =e^{i(m-m_{0})\varphi}V_{m},\qquad\rho(\kappa
)V_{m}=\bar{V}_{m},\\
\rho(\varphi)W_{m}  &  =e^{i(m+m_{0})\varphi}W_{m},\qquad\rho(\kappa
)W_{m}=\bar{W}_{m}.
\end{align*}
so that
\begin{align}
\rho(\varphi)(V_{m},W_{m-2m_{0}})  &  =e^{i(m+m_{0})\varphi}(V_{m}%
,W_{m-2m_{0}}),\label{action}\\
\rho(\kappa)(V_{m},W_{m-2m_{0}})  &  =(\bar{V}_{m},\bar{W}_{m-2m_{0}}%
)\text{.}\nonumber
\end{align}
Therefore, the subspaces of functions $(V_{m},W_{m-2m_{0}})$ are composed of
similar irreducible representations under the action of the group $O(2)$.

The subspace $(V_{m},W_{m-2m_{0}})$ has as isotropy group, the dihedral group
$D_{m-m_{0}}$ generated by the elements $\kappa$ and $\zeta=2\pi/(m-m_{0})$.
The dihedral group $D_{m-m_{0}}$ will be used to find the symmetry-breaking
bifurcations of the primary branch into the multi-vortex solutions along the secondary
branches. Due to the symmetries of $D_{m-m_0}$, the multi-vortex solution
is represented by a $(m-m_0)$-polygon of individual vortices.

For a fixed value of $m\in\mathbb{Z}$, the action of $\rho
(\zeta)$ is given by%
\[
\rho(\zeta)(V_{j},W_{j-2m_{0}})=\exp\left(  2\pi i\frac{j-m_{0}}{m-m_{0}%
}\right)  (V_{j},W_{j-2m_{0}}),\quad j\in\mathbb{Z}.
\]
The fixed point space
\[
\mathrm{Fix}(D_{m-m_{0}})=\{(v,w)\in L^{2}(\mathbb{R}^{2}):\quad\rho
(\gamma)(v,w)=(v,w)\text{ for }\gamma\in D_{m-m_{0}}\}
\]
is composed of functions with real components $(V_{j},W_{j-2m_{0}})$ such that
$j-m_{0}$ is a multiple of $m-m_{0}$. If $(v,\bar{v})\in\mathrm{Fix}%
(D_{m-m_{0}})$, then $v$ can be characterized by
\[
v(r,\theta)=\sum_{j\in m_{0}+(m-m_{0})\mathbb{Z}}V_{j}(r)e^{ij\theta
}=e^{im_{0}\theta}\sum_{j\in(m-m_{0})\mathbb{Z}}V_{m_{0}+j}(r)e^{ij\theta},
\]
where all functions $\{V_{j}(r)\}_{j\in m_{0}+(m-m_{0})\mathbb{Z}}$ are
real-valued. Writing $v(r,\theta)=e^{im_{0}\theta}\phi(r,\theta)$, we deduce
that $\phi$ satisfies the symmetry constraints:
\begin{equation}
\phi(r,\theta)=\bar{\phi}(r,-\theta)=\phi(r,\theta+\zeta)\text{.} \label{sym}%
\end{equation}

Since $\mathbf{g}$ is $O(2)$-equivariant, the operator $\mathbf{g}%
(\mathbf{v})$ restricted to $\mathrm{Fix}(D_{m-m_{0}})$ is well defined.
Therefore, we can consider the bifurcation problem
\begin{equation}
\mathbf{g}^{D_{m-m_{0}}}(\mathbf{v};a,\Omega):\;X\cap\mathrm{Fix}(D_{m-m_{0}%
})\times\mathbb{R}\times\mathbb{R}\rightarrow\mathrm{Fix}(D_{m-m_{0}}),
\label{bifurcation-problem-D}%
\end{equation}
where $X:=H^{2}(\mathbb{R}^{2})\cap L^{2,2}(\mathbb{R}^{2})$ is the graph norm
of the Jacobian operator $\mathcal{H}$. A schematic
illustration of the local bifurcations of the primary and secondary branches is given on
Figure \ref{fig-2}.

\begin{figure}[th]
\centering

\begin{center}
\resizebox{15.0cm}{!}{ \includegraphics{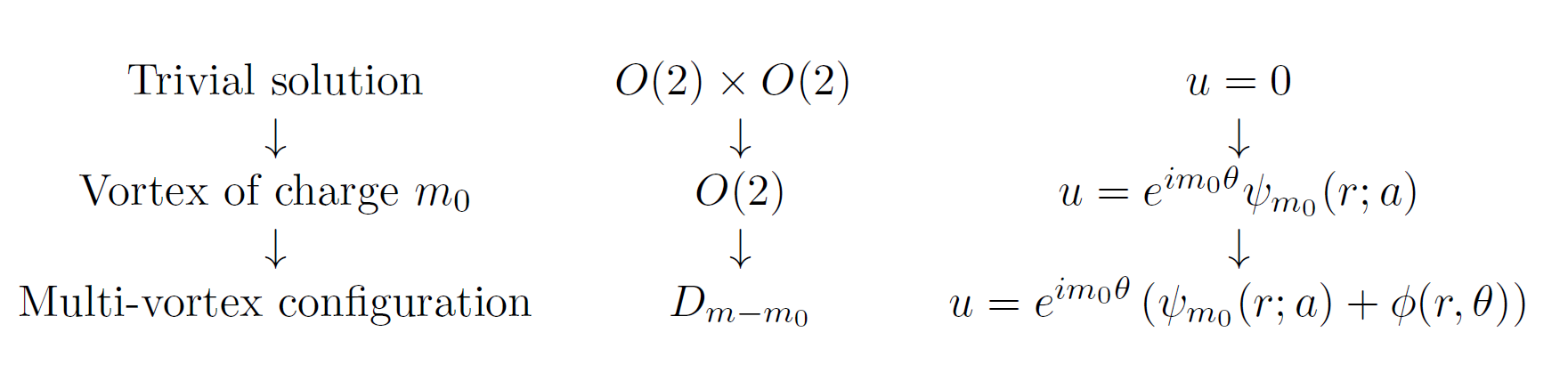}}
\end{center}

\caption{The isotropy lattice for the symmetry-breaking bifurcations.}%
\label{fig-2}
\end{figure}

By Schur's lemma, the Jacobian operator $\mathcal{H}$ for $\mathbf{g}$ has a
diagonal decomposition in the subspaces of similar irreducible representations
given by the components $(V_{m},W_{m-2m_{0}})$. Indeed, this has been done in
(\ref{Hessian}) and (\ref{Hessian-blocks}), where the operator $\mathcal{H}$
in the subspace $(V_{m},W_{m-2m_{0}})$ is represented by the block $H_{m}$.
Consequently, for a fixed $m\in\mathbb{Z}$, the Jacobian operator of
$\mathbf{g}^{D_{m-m_{0}}}$ consists of the blocks $H_{j}$ corresponding to
$j-m_{0}\in(m-m_{0})\mathbb{Z}$. Moreover, in the subspace $\mathrm{Fix}%
(D_{m-m_{0}})$ we have $w=\bar{v}$, so that $W_{j-m_{0}}=V_{-\left(
j-m_{0}\right)  }$ and the blocks $H_{j}$ with negative $j-m_{0}$ are
determined by those with $j-m_{0}\in\mathbb{N}$. Hence, we denote
\[
\mathcal{H}^{D_{m-m_{0}}} = \mathrm{diag}\{H_{j}\}_{j\in m_{0}+(m-m_{0})\mathbb{N}}.
\]

The operator $\mathcal{H}$ has a zero eigenvalue in the block $j=m_{0}$ due to
the gauge invariance of the original problem. This zero eigenvalue is not
present for the operator $\mathcal{H}^{D_{m-m_{0}}}$ in $\mathrm{Fix}%
(D_{m-m_{0}})$ because the reflection $\kappa\in D_{m-m_{0}}$ excludes the
gauge invariance. Furthermore, the double eigenvalues of $\mathcal{H}$ in the
blocks with positive and negative $j-m_{0}$ become the simple eigenvalues of
$\mathcal{H}^{D_{m-m_{0}}}$ in $\mathrm{Fix}(D_{m-m_{0}})$ again due to the
reflection $\kappa$.

\subsection{Local bifurcation results}

Here we prove a local bifurcation from a simple eigenvalue of $\mathcal{H}%
^{D_{m-m_{0}}}$ that exists at $\Omega= \Omega_{m,n}(a)$ for small $a$,
according to (\ref{bifurcation-point}) and (\ref{frequency-1}) in Lemmas
\ref{lemma-count-B} and \ref{lemma-last-bifurcation}. The restriction of the
space $X$ to the fixed-point space $\mathrm{Fix}(D_{m-m_{0}})$ is useful in
two aspects. First, it allows us to prove the local bifurcation from a simple
eigenvalue by avoiding resonances from the components that are not contained
in $\mathrm{Fix}(D_{m-m_{0}})$. Second, it gives additional information on
symmetries of the bifurcating solutions $\mathbf{v}$. The symmetries are useful to understand the
distributions of individual vortices in the $(m-m_{0})$-polygons.

The local bifurcation results are obtained for the non-resonance bifurcation
points, according to the following definition. This definition extends
Definition \ref{remark-non-resonance}.

\begin{definition}
For a fixed $a > 0$, we say that $\Omega_{m,n}(a)\in(0,2)$ is a non-resonant
bifurcation point if the kernel of $\mathcal{H}^{D_{m-m_{0}}}(\Omega
_{m,n}(a))$ has dimension one. We say that $\Omega_{m,n} \in(0,2)$ is a
non-resonant curve if this condition holds for each small $a$.
\end{definition}

For each curve $\Omega_{m,n}$, the non-resonant condition is given by the
following equivalent conditions:

\begin{itemize}
\item[(i)] $H_{j}(a,\Omega_{m,n}(a))$ is invertible;

\item[(ii)] $\Omega_{m,n}(a) \neq\Omega_{j,k}(a)$;

\item[(iii)] $\mu_{j,k}^{-}(a,\Omega_{m,n}(a)) \neq0$;
\end{itemize}
\noindent where $j$ takes values in $m_{0}+(m-m_{0})\ell$, $\ell\in\mathbb{N}
\backslash\{1\}$, $k \in \mathbb{N}_0$, and $a>0$ is arbitrary but sufficiently small.

As we discussed in Remark \ref{remark-resonant-curve}, the bifurcation curves
are all non-resonant for $1\leq m_{0}\leq3$ and the first resonance happens
for $m_{0}=4$ because $\Omega_{10,0}(0)=\Omega_{7,1}(0)=2/3$. In view of the
restriction on the range of $j$ in the space $\mathrm{Fix}(D_{m-m_{0}})$,
however, the bifurcation curve $\Omega_{10,0}$ is non-resonant because
$\mathcal{H}^{D_{6}}$ is composed of blocks $H_{j}$ with $j=4,10,16,...$ and
the zero eigenvalue $\mu_{7,1}^{-}(0,\Omega_{10,0}(0))$ is not included in the
spectrum of $\mathcal{H}^{D_{6}}$. On the other hand, the bifurcation curve
$\Omega_{7,1}$ may be resonant because $\mathcal{H}^{D_{3}}$ is composed of
blocks $H_{j}$ with $j=4,7,10,...$ and the zero eigenvalue $\mu_{10,0}%
^{-}(0,\Omega_{7,1}(0))$ is included in the spectrum of $\mathcal{H}^{D_{3}}$.
To know exactly if $\Omega_{7,1}(a)$ is resonant with $\Omega_{10,0}(a)$ one
needs to compute the normal form in $a$, which is out of the scope of our presentation.

\begin{remark}
The curves $\Omega_{m,0}$ for $2m_{0}+1\leq m\leq3m_{0}-1$ are non-resonant.
Even if the resonance occurs in $\mathcal{H}$, e.g. for $m_{0}=4$, it does not
show up in $\mathcal{H}^{D_{m-m_{0}}}$. Indeed, if $\Omega_{m,0}(0)=\Omega_{j,k}(0)$
for $j-m_{0}=(m-m_{0})\ell$ with $\ell\in\mathbb{N}\backslash\{1\}$, then
$$
f(m) = f(j) - \frac{2k}{j-m_0},
$$
where
$$
f(j) = \frac{m_{0}-\left\vert j-2m_{0}\right\vert }{j-m_{0}} = \left\{
\begin{array}[c]{cc} 1 & m_{0} < j \leq 2 m_{0}\\
\frac{2m_{0}}{j-m_{0}} - 1 & 2m_{0} < j \end{array} \right.
$$
Note that $f$ is a strictly decreasing function on $[2m_0,\infty)$. If $k=0$, then $j > 2m_{0}$, hence $f(m)=f(j)$
is true only if $j=m$ ($\ell = 1$), which is excluded.
If $k \geq 1$, then $f(m) < f(j)$, which implies that $m > j$ or $j-m_{0}<m-m_{0}$.
Therefore, the possible resonant block
$H_{j}$ with $m_{0} < j < m$ is not in $\mathcal{H}^{D_{m-m_{0}}} = {\rm diag}(H_{m_{0}},H_{m},H_{2m-m_0},\ldots)$.
\end{remark}

\begin{remark}
\label{remark-last-curve} The curve $\Omega_{m_{0}+1,0}(a) = 2 +
\mathcal{O}(a^{2})$ is non-resonant as long as the matrices $\tilde{A}$
arising in the matrix eigenvalue problem (\ref{matrix-A-tilde}) are invertible.
We have checked this condition numerically for $1 \leq m_{0} \leq100$.
\end{remark}

The following proposition follows from the Crandall-Rabinowitz theorem, see
Theorem I.5.1 in \cite{HKiel}. It covers the non-resonant bifurcation curve
$\Omega_{m,n}$, for which Proposition \ref{proposition-secondary-bifurcations}
applies. It does not cover the curve $\Omega_{m_{0}+1,0}$ in Remark
\ref{remark-last-curve}.

\begin{proposition}
\label{ThmLoc} For each non-resonant curve $\Omega_{m,n}\in(0,2)$
parameterized by $a>0$ sufficiently small, the operator $\mathbf{g}%
^{D_{m-m_{0}}}(\mathbf{v};a,\Omega)$ in (\ref{bifurcation-problem-D}) admits a
new family of roots $\mathbf{v} \in\mathrm{Fix}(D_{m-m_{0}})$ and $\Omega \in (0,2)$ parameterized by real $b$ such that
\begin{equation}
\Omega(a,b)=\Omega_{m,n}(a)+\mathcal{O}(b^{2}) \label{new-root-Omega}%
\end{equation}
and
\begin{equation}
v(r,\theta;a,\Omega(a,b)) = b f_{m,n}(r,\theta;a) + \mathcal{O}_X(a b^{2},
b^{3}), \label{new-root-v}%
\end{equation}
where
\begin{equation}
f_{m,n}(r,\theta;a) = e_{|m-2m_{0}|,n}(r)e^{i(2m_{0}-m)\theta} + \mathcal{O}_X(a^{2}) \label{f-}%
\end{equation}
is the eigenvector of $H_{m}(a,\Omega_{m,n}(a))$ associated with the zero
eigenvalue $\mu_{m,n}^{-}(a,\Omega_{m,n}(a))$.
\end{proposition}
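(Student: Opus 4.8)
The plan is to apply the classical Crandall--Rabinowitz bifurcation-from-a-simple-eigenvalue theorem (Theorem I.5.1 in \cite{HKiel}) to the restricted operator $\mathbf{g}^{D_{m-m_{0}}}(\mathbf{v};a,\Omega)$ in (\ref{bifurcation-problem-D}), treating $\Omega$ as the bifurcation parameter and keeping $a>0$ fixed and small. The three hypotheses to verify are: (a) the Jacobian $\mathcal{H}^{D_{m-m_{0}}}(a,\Omega_{m,n}(a))$ is a Fredholm operator of index zero with a one-dimensional kernel spanned by $f_{m,n}$; (b) $f_{m,n}$ is not in the range of $\mathcal{H}^{D_{m-m_{0}}}(a,\Omega_{m,n}(a))$ (the range is its orthogonal complement since the operator is self-adjoint); and (c) the transversality condition $\partial_\Omega \mathcal{H}^{D_{m-m_{0}}}(a,\Omega_{m,n}(a))\, f_{m,n} \notin \mathrm{Ran}\,\mathcal{H}^{D_{m-m_{0}}}(a,\Omega_{m,n}(a))$ holds. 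Once these are checked, Crandall--Rabinowitz furnishes the branch in the stated form, with the leading term $b\,f_{m,n}$ and the parabolic expansion $\Omega = \Omega_{m,n}(a) + \mathcal{O}(b^2)$; the higher-order corrections $\mathcal{O}_X(ab^2,b^3)$ then follow from the smoothness (indeed analyticity) of $\mathbf{g}$ and the implicit-function-theorem construction in the reduced equation, once we track the $a$-dependence of the coefficients.

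First I would fix the functional-analytic setup: $\mathbf{g}$ is analytic $X \times \mathbb{R}^2 \to L^2(\mathbb{R}^2)$ with $X = H^2 \cap L^{2,2}$, and by $O(2)$-equivariance it restricts to an analytic map on $X \cap \mathrm{Fix}(D_{m-m_0})$. The Jacobian at $\mathbf{v}=0$ is $\mathcal{H}^{D_{m-m_0}} = \mathrm{diag}\{H_j\}_{j \in m_0 + (m-m_0)\mathbb{N}}$. Each $H_j(a,\Omega)$ is a relatively compact perturbation of $-\Delta + r^2$ on each component, hence self-adjoint with compact resolvent, so Fredholm of index zero. For the non-resonant curve $\Omega_{m,n}$, the block $j = m$ (i.e. $\ell = 1$) has, by Proposition \ref{proposition-secondary-bifurcations}, a simple zero eigenvalue with eigenvector $f_{m,n}$ given by (\ref{f-}), and all blocks $H_j$ with $\ell \geq 2$ are invertible by the non-resonance hypothesis (definition and the three equivalent conditions listed before the proposition). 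This establishes (a). For (b), self-adjointness gives $\mathrm{Ran}\,\mathcal{H}^{D_{m-m_0}} = (\ker \mathcal{H}^{D_{m-m_0}})^\perp$, and $f_{m,n} \perp f_{m,n}$ fails, so (b) holds.

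The main work — and the step I expect to be the crux — is the transversality condition (c). From (\ref{derivative-H}), $\partial_\Omega H_m(a,\Omega) = -(m-m_0)R$, so I must show $\langle -(m-m_0)R\, f_{m,n}, f_{m,n}\rangle_{L^2_r} \neq 0$. Writing $f_{m,n} = (V_m, W_{m-2m_0})$, this inner product equals $-(m-m_0)\big(\|V_m\|_{L^2_r}^2 - \|W_{m-2m_0}\|_{L^2_r}^2\big) = -(m-m_0) S_m$, and Proposition \ref{proposition-secondary-bifurcations} already establishes $S_m < 0$ for small $a$ (since $V_m = \mathcal{O}_{L^2_r}(a^2)$ while $W_{m-2m_0} \to e_{|m-2m_0|,n}$ is $O(1)$). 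Hence the derivative of the perturbed simple eigenvalue of $\mathcal{H}^{D_{m-m_0}}$ with respect to $\Omega$ at $\Omega_{m,n}(a)$ is nonzero, which is exactly the Crandall--Rabinowitz transversality condition, and the sign also pins down on which side of $\Omega_{m,n}(a)$ the eigenvalue is negative versus positive. I would also need to note that for the ``last'' curve the argument does not apply directly (consistent with the statement that the proposition does not cover $\Omega_{m_0+1,0}$, cf. Remark \ref{remark-last-curve}), so only the genuinely non-resonant curves $\Omega_{m,n} \in (0,2)$ are treated here.

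Finally, I would record the symmetry and asymptotic consequences. Since the bifurcating solutions lie in $\mathrm{Fix}(D_{m-m_0})$, writing $v = e^{im_0\theta}\phi$ the profile $\phi$ satisfies (\ref{sym}); the leading term $b\, e_{|m-2m_0|,n}(r)e^{i(2m_0-m)\theta}$ shows that $v$ is, to leading order, a superposition of the carrier $e^{im_0\theta}$ with a $D_{m-m_0}$-symmetric angular modulation, which is the input needed in Section 5 to locate the $(m-m_0)$ individual vortices. The error estimate $\mathcal{O}_X(ab^2, b^3)$ in (\ref{new-root-v}) comes from expanding the reduced (bifurcation) equation: the quadratic terms in $g$ in (\ref{operator-g}) carry an explicit factor $\psi_{m_0}(r;a) = \mathcal{O}_{H^1_r}(a)$, so the lowest-order correction to the pure cubic-pitchfork scaling is $O(ab^2)$, while the genuinely cubic term $|v|^2 v$ contributes at $O(b^3)$; both are controlled uniformly in $a$ because the inverse of $\mathcal{H}^{D_{m-m_0}}$ restricted to the range is bounded (Proposition \ref{proposition-secondary-bifurcations}, bound (\ref{bound-on-inverse-general})). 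This yields (\ref{new-root-Omega})--(\ref{f-}) and completes the proof.
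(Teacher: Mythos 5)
Your overall strategy coincides with the paper's: apply the Crandall--Rabinowitz theorem (Theorem I.5.1 of \cite{HKiel}) to $\mathbf{g}^{D_{m-m_{0}}}$ in the fixed-point space, using the non-resonance condition for a one-dimensional kernel, the uniformly bounded inverse from Proposition \ref{proposition-secondary-bifurcations}, and the $\Omega$-derivative for transversality. Your verification of transversality via $\partial_{\Omega}H_{m}=-(m-m_{0})R$ from (\ref{derivative-H}) and the Krein quantity $S_{m}<0$ from (\ref{krein-sign}) is sound and consistent with the paper's framework (the paper instead evaluates $\langle i\partial_{\theta}f_{m,n},f_{m,n}\rangle_{L^{2}}$, but the conclusion is the same: the zero eigenvalue crosses transversally in $\Omega$).

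There is, however, a genuine gap in how you obtain (\ref{new-root-Omega}). Crandall--Rabinowitz by itself only yields $\Omega(a,b)=\Omega_{m,n}(a)+\mathcal{O}(b)$; the parabolic (pitchfork) expansion $\mathcal{O}(b^{2})$ requires the quadratic term of the reduced bifurcation equation to vanish identically, and your argument does not establish this. Your observation that the quadratic terms of $g$ in (\ref{operator-g}) carry the factor $\psi_{m_{0}}(\cdot;a)=\mathcal{O}(a)$ only shows that the projected quadratic coefficient is $\mathcal{O}(a)$, which gives $\Omega-\Omega_{m,n}(a)=\mathcal{O}(ab)+\mathcal{O}(b^{2})$; since $a$ is fixed and $b\to0$, the $\mathcal{O}(ab)$ term dominates and the stated estimate (\ref{new-root-Omega}) does not follow. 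The missing ingredient is the symmetry argument used in the paper: the $S^{1}$-action (\ref{action}) by rotation through half the polygon angle lies in the normalizer of $D_{m-m_{0}}$, leaves the reduced problem equivariant, and acts as $-1$ on the kernel spanned by $f_{m,n}$. Hence the one-dimensional bifurcation equation is odd in $b$, so $\partial_{vv}\mathbf{g}^{D_{m-m_{0}}}(0)(f_{m,n},f_{m,n})$ projects to zero exactly (not merely to $\mathcal{O}(a)$), and formula (I.6.3) of \cite{HKiel} then gives $\Omega(a,b)=\Omega_{m,n}(a)+\mathcal{O}(b^{2})$. Without this (or an equivalent direct computation showing the relevant angular integrals vanish), your proof only delivers a weaker version of (\ref{new-root-Omega}); the remainder of your argument, including the estimate $\mathcal{O}_{X}(ab^{2},b^{3})$ in (\ref{new-root-v}), is in line with the paper.
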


\begin{proof}
The local bifurcation problem (\ref{bifurcation-problem-D}) is well-defined
for the operator $\mathbf{g}^{D_{m-m_{0}}}$. The operator $\mathbf{g}%
^{D_{m-m_{0}}}$ has a linearization given by $\mathcal{H}^{D_{m-m_{0}}}$ and
its kernel is spanned by the eigenvector $f_{m,n}$ associated to the
simple zero eigenvalue $\mu_{m,n}^{-}(a;\Omega_{m,n}(a))$ under the assumption
of the proposition. Since $\mathcal{H}^{D_{m-m_{0}}}$ has a uniformly bounded
inverse operator in the complement of the kernel, according to Proposition
\ref{proposition-secondary-bifurcations}, we are in the position to define the
bifurcation equation as in Theorem I.5.1 in \cite{HKiel}.

The only condition to be verify is that
\[
\partial_{\Omega}\mathcal{H}^{D_{m-m_{0}}}f_{m,n} = i\partial_{\theta} f_{m,n}
\]
is not in the range of $\mathcal{H}^{D_{m-m_{0}}}$. Thanks to the basis in
(\ref{basis-bifurcation}) and the fact that the zero eigenvalue corresponds to
$\mu_{m,n}^{-}$, the leading-order approximation of the eigenvector
$f_{m,n}$ is given by (\ref{f-}) for $a>0$ sufficiently small. Then,
$i \partial_{\theta} f_{m,n} \notin {\rm Ran}(\mathcal{H}^{D_{m-m_{0}}})$
because
\[
\left\langle i\partial_{\theta}f_{m,n},f_{m,n}\right\rangle_{L^2} = m -
2m_{0} +\mathcal{O}(a^{2})\neq0\text{.}%
\]

The existence of the new root of $\mathbf{g}^{D_{m-m_{0}}}(v;a,\Omega)$ and
the estimate (\ref{new-root-v}) for $a>0$ sufficiently small follow from the
Crandall-Rabinowitz theorem, where the scaling $\mathcal{O}_X(ab^2)$ is due to the cubic terms in the expressions
for $g$ in (\ref{operator-g}). This theorem gives also the estimate $\Omega(a,b)
= \Omega_{m,n}(a)+\mathcal{O}(b)$. Furthermore, the $S^{1}$-action
(\ref{action}) of the element $\varphi=\pi/(m+m_{0})$ in the kernel generated
by $f_{m,n}$ is given by $\rho(\varphi)=-1$. Therefore, the bifurcation
equation is odd and $\partial_{vv}\mathbf{g}^{D_{m-m_{0}}}(0)(f_{m,n},f_{m,n}) = 0$. The estimate (\ref{new-root-Omega}) is obtained
from formula (I.6.3) in \cite{HKiel}.
\end{proof}

\begin{remark}
The new family (\ref{new-root-Omega}) and (\ref{new-root-v}) exists on one
side of the bifurcation curve $\Omega_{m,n}$, that is,
\[
\Omega(a,b)=\Omega_{m,n}(a)+cb^{2}+\mathcal{O}(b^{4}),
\]
where $c$ can be computed from (I.6.11) in \cite{HKiel}. If $c>0$ the
bifurcation is supercritical pitchfork (to the right of the bifurcation curve)
and the Jacobian operator at the new (secondary) branch of solutions has one
more negative eigenvalue compared to that at the primary branch. If $c<0$ the
bifurcation is subcritical pitchfork (to the left of the bifurcation curve)
and the Jacobian operator at the new branch of solutions has one less negative
eigenvalue compared to that at the primary branch. Because the new family can
be rotated in the $(x,y)$ plane, the Jacobian operator at the new branch has
an additional zero eigenvalue related to this rotation symmetry.
\end{remark}

The following proposition covers the non-resonant bifurcation curve
$\Omega_{m+1,0}$, for which Proposition \ref{proposition-last-bifurcation} applies.

\begin{proposition}
\label{ThmLast} For the non-resonant curve $\Omega_{m_{0}+1,0}$ parameterized
by $a > 0$ sufficiently small, the operator $\mathbf{g}^{D_{1}}(\mathbf{v}%
;a,\Omega)$ in (\ref{bifurcation-problem-D}) admits a new family of roots
$\mathbf{v} \in \mathrm{Fix}(D_{1})$ and $\Omega \in (0,2)$ parameterized by
real $b$ such that
\begin{equation}
\label{last-root-Omega}\Omega(a,b) = \Omega_{m_{0}+1,0}(a) + \mathcal{O}(a^{2}
b^{2})
\end{equation}
and
\begin{equation}
\label{last-root-v}v(r,\theta;a,\Omega(a,b)) = a \left[  b f_{m_{0}+1,0}(r,\theta;a) + \mathcal{O}_X(b^{2}) \right]  ,
\end{equation}
where $f_{m_{0}+1,0}$ is the eigenvector
of $H_{m_{0}+1}(a,\Omega_{m_{0}+1,0}(a))$ associated with the zero eigenvalue.
\end{proposition}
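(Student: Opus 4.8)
My plan is to follow the blueprint of the proof of Proposition \ref{ThmLoc}, namely to apply the Crandall--Rabinowitz theorem (Theorem I.5.1 in \cite{HKiel}) to the restricted operator $\mathbf{g}^{D_{1}}$, with the one essential complication that on the complement of the kernel the Jacobian $\mathcal{H}^{D_{1}}$ is only invertible with an inverse of norm $\mathcal{O}(a^{-2})$, according to the bound (\ref{bound-loss-a-2}) in Proposition \ref{proposition-last-bifurcation}. To convert this into $a$-uniform estimates I would first rescale $v=aw$. Since $g(0;a,\Omega)=0$ and $\psi_{m_{0}}(r;a)=ae_{m_{0},0}(r)+\mathcal{O}_{H^{1}_{r}}(a^{3})$, the rescaled operator
\[
G(w;a,\Omega):=a^{-1}g(aw;a,\Omega)=\mathcal{H}(a,\Omega)w+a^{2}N(w;a)
\]
collects all the nonlinear terms into $a^{2}N(w;a)$, where $N$ is analytic in $w$, bounded uniformly in $a$ on bounded subsets of $X$, with leading part $e^{-im_{0}\theta}e_{m_{0},0}w^{2}+2e^{im_{0}\theta}e_{m_{0},0}|w|^{2}+|w|^{2}w$. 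The explicit prefactor $a^{2}$ is precisely what compensates the $a^{-2}$ in the inverse of $\mathcal{H}^{D_{1}}$.

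Next I would carry out the Lyapunov--Schmidt reduction on $X\cap\mathrm{Fix}(D_{1})$. By Proposition \ref{proposition-last-bifurcation} and Remark \ref{remark-last-curve}, at $\Omega=\Omega_{m_{0}+1,0}(a)$ the kernel of $\mathcal{H}^{D_{1}}$ is one-dimensional and spanned by $f_{m_{0}+1,0}$: the block $H_{m_{0}-1}$, which by Lemma \ref{lemma-2} carries the same zero eigenvalue as $H_{m_{0}+1}$, does not appear in $\mathcal{H}^{D_{1}}=\mathrm{diag}\{H_{j}\}_{j\in m_{0}+\mathbb{N}}$, and the reflection $\kappa\in D_{1}$ also removes the gauge mode $(\psi_{m_{0}},-\psi_{m_{0}})$. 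Writing $w=bf_{m_{0}+1,0}+\phi$ with $\phi$ in the $L^{2}_{r}$-orthogonal complement of the kernel inside $\mathrm{Fix}(D_{1})$, I would solve the range equation $(I-P)G(bf_{m_{0}+1,0}+\phi;a,\Omega)=0$ for $\phi=\phi(b;a,\Omega)$ by the implicit function theorem. Since $\|[(I-P)\mathcal{H}^{D_{1}}(a,\Omega)(I-P)]^{-1}\|\lesssim a^{-2}$ for $|\Omega-\Omega_{m_{0}+1,0}(a)|<C_{m_{0}}a^{2}$ while the nonlinearity is $a^{2}N$, the fixed-point estimate gives $\phi=\mathcal{O}_{X}(b^{2})$ uniformly in $a$; undoing the rescaling then yields $v=aw=a[bf_{m_{0}+1,0}+\mathcal{O}_{X}(b^{2})]$, which is (\ref{last-root-v}).

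It remains to analyze the scalar reduced equation $\Phi(b,\Omega):=\langle G(bf_{m_{0}+1,0}+\phi;a,\Omega),f_{m_{0}+1,0}\rangle_{L^{2}}=0$. The Crandall--Rabinowitz transversality condition is verified from (\ref{derivative-H}): at $\Omega=\Omega_{m_{0}+1,0}(a)$,
\[
\partial_{\Omega}\langle\mathcal{H}(a,\Omega)f_{m_{0}+1,0},f_{m_{0}+1,0}\rangle=-\langle Rf_{m_{0}+1,0},f_{m_{0}+1,0}\rangle=-S_{m_{0}+1}=\frac{1}{2m_{0}+1}+\mathcal{O}(a^{2})\neq0,
\]
where $S_{m_{0}+1}$ is the Krein quantity (\ref{krein-sign}), evaluated in (\ref{Krein-m0-1}) using the eigenvector (\ref{eigenvalues-1a}); this quantity is bounded away from zero uniformly in small $a$, so Theorem I.5.1 in \cite{HKiel} applies. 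Exactly as in Proposition \ref{ThmLoc}, the rotation $\rho(\pi)\in S^{1}$ normalizes $D_{1}$ and acts by $-1$ on the kernel (indeed $\rho(\pi)f_{m_{0}+1,0}=-f_{m_{0}+1,0}$, since $\rho(\pi)$ multiplies the $j$-th Fourier block by $(-1)^{j-m_{0}}$ and $f_{m_{0}+1,0}$ is supported on $j=m_{0}\pm1$), so $\Phi$ is odd in $b$ and the bifurcation is of pitchfork type, $\Omega(a,b)=\Omega_{m_{0}+1,0}(a)+cb^{2}+\mathcal{O}(b^{4})$. Finally, because every nonlinear term of $G$ carries the prefactor $a^{2}$ and $\phi=\mathcal{O}_{X}(b^{2})$, the coefficient of $b^{3}$ in $\Phi$ is $\mathcal{O}(a^{2})$, whence $c=\mathcal{O}(a^{2})$ and $\Omega(a,b)=\Omega_{m_{0}+1,0}(a)+\mathcal{O}(a^{2}b^{2})$, which is (\ref{last-root-Omega}); since $\Omega_{m_{0}+1,0}(a)<2$ by (\ref{frequency-1}), the new branch stays in $(0,2)$ for small $b$. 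The main obstacle throughout is the $\mathcal{O}(a^{-2})$ degeneration of $\mathcal{H}^{D_{1}}$ on the complement of the kernel; it forces the rescaling $v=aw$ and careful bookkeeping of powers of $a$ at every step, so that the Crandall--Rabinowitz machinery applies with constants that do not blow up as $a\to0$.
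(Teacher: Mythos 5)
Your proposal is correct and follows essentially the same route as the paper: the rescaling $v=aw$ so that the $\mathcal{O}(a^{2})$ prefactor on the nonlinear terms offsets the $\mathcal{O}(a^{-2})$ loss in the inverse bound (\ref{bound-loss-a-2}), after which the Lyapunov--Schmidt/Crandall--Rabinowitz argument of Proposition \ref{ThmLoc} closes with $b$ scaled independently of $a$, yielding (\ref{last-root-Omega}) and (\ref{last-root-v}). Your explicit transversality check via the Krein quantity, $\langle -R f_{m_{0}+1,0},f_{m_{0}+1,0}\rangle=-S_{m_{0}+1}=\tfrac{1}{2m_{0}+1}+\mathcal{O}(a^{2})\neq 0$, is a useful sharpening of the paper's terse reference to Proposition \ref{ThmLoc}, since here the kernel eigenvector genuinely mixes the two Fourier components.
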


\begin{proof}
The scaling of $a$ in (\ref{last-root-v}) is needed due to the loss of
$\mathcal{O}(a^{-2})$ in the bound (\ref{bound-loss-a-2}) on the inverse
operator $(\mathcal{H}^{D_{1}})^{-1}$, according to Proposition
\ref{proposition-last-bifurcation}. Since $\psi_{m_{0}}=\mathcal{O}(a)$, the
nonlinear terms in the operator $\mathbf{g}^{D_{1}}(\mathbf{v};a,\Omega)$ are
now scaled by $\mathcal{O}(a^{3})$, hence the loss of $\mathcal{O}(a^{-2})$
produces the terms of the expansion (\ref{last-root-v}) at the order
$\mathcal{O}(a)$ and higher. Hence the bifurcation problem is closed at the
order $\mathcal{O}(a)$ and the proof follows the one in Proposition
\ref{ThmLoc} with a new parameter $b$, which is scaled independently of $a$.
\end{proof}

\begin{remark}
The local bifurcation in Proposition \ref{ThmLast} is also of the pitchfork
type. Thanks to the computations in Lemma \ref{lemma-3}, the
leading-order approximation of the eigenvector $f_{m_{0}+1,0}$ is given
by
\begin{equation}
\label{ff}
f_{m_0+1,0}(r,\theta;a)=c_{m_{0}+1}e_{m_{0}+1,0}(r)e^{i(m_{0}+1)\theta
}+c_{-m_{0}+1} e_{m_{0}-1,0}(r)e^{i(m_{0}-1)\theta}+\mathcal{O}_X(a^{2}),
\end{equation}
for $a>0$ sufficiently small, where $(c_{m_{0}+1},c_{1-m_{0}})$ is an
eigenvector of the matrix $\tilde{A}$ computed in (\ref{eigenvalues-1a}).
\end{remark}

\begin{remark}
Propositions \ref{ThmLoc} and \ref{ThmLast} yield the proof of item (iv) in Theorem \ref{theorem-main}.
\end{remark}

\subsection{Global bifurcation}

We obtain the global bifurcation result in the fixed-point space $\mathrm{Fix}%
(D_{m-m_{0}})$ by using the topological degree theory in the case of
simple eigenvalues. It is usually referred to as the global Rabinowitz result,
see Theorem 3.4.1 of \cite{Ni2001}. The global bifurcation result means that
the solution branch $(v,\Omega)$ that originates at the non-resonant
bifurcation curve $(0,\Omega_{m,n})$ either reaches the boundaries $\Omega=0$
and $\Omega=2$ or return to another bifurcation point $(0,\Omega_{\ast})$ or
diverges to infinite values of $v$ for a finite value of $\Omega\in
\lbrack0,2)$.

The following result holds because the Jacobian operator $\mathcal{H}$ given by
(\ref{Hessian-definition}) and (\ref{Hessian}) is bounded and has closed range
for $\left\vert \Omega\right\vert <2$.

\begin{lemma}
\label{Lem1} Let $X=H^{2}(\mathbb{R}^{2})\cap L^{2,2}(\mathbb{R}^{2})$ be the
domain space for the Jacobian operator $\mathcal{H}$. For every $\left\vert
\Omega\right\vert <2$ there is a positive constant $c$ such that the operator
$\left(  \mathcal{H}+cI\right)  :X\rightarrow L^{2}(\mathbb{R}^{2})$ is
positive definite and $\left(  \mathcal{H}+cI\right)  ^{-1}:X\rightarrow X$ is compact.
\end{lemma}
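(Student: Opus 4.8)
The plan is to exhibit the constant $c$ explicitly from the structure of the quadratic form and then deduce compactness of the resolvent from a standard Rellich-type argument adapted to the weighted space $X = H^2(\mathbb{R}^2) \cap L^{2,2}(\mathbb{R}^2)$. First I would write $\mathcal{H} = \mathcal{H}_0 + V$, where $\mathcal{H}_0 = \mathrm{diag}(-\Delta_{(r,\theta)} + r^2 + i\Omega\partial_\theta, -\Delta_{(r,\theta)} + r^2 - i\Omega\partial_\theta)$ and $V$ collects the bounded multiplication terms $-\mu + 2\psi_{m_0}^2$ on the diagonal and $\psi_{m_0}^2 e^{\pm 2im_0\theta}$ off-diagonal; since $\psi_{m_0} \in H^2_r \cap L^{2,2}_r$ decays, $V$ is a bounded self-adjoint operator on $L^2(\mathbb{R}^2)$. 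For the unperturbed part, the key inequality is that for $|\Omega| < 2$ the operator $-\Delta + r^2 \pm i\Omega\partial_\theta$ is bounded below: writing the quadratic form in the angular Fourier basis, the $m$-th mode contributes $\|\partial_r V_m\|^2 + \|(r + m/r)V_m\|^2$ or similar, and one checks $\langle(-\Delta + r^2 \pm i\Omega\partial_\theta)u, u\rangle \geq (1 - |\Omega|/2)\langle(-\Delta + r^2)u,u\rangle - C\|u\|^2$ using $|\Omega \langle\partial_\theta u, u\rangle| \leq \tfrac{1}{2}|\Omega|\,( \langle -\partial_\theta^2 u, u\rangle + \|u\|^2)$ and $-r^{-2}\partial_\theta^2 \leq -\Delta + r^2$ pointwise in the form sense. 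Hence there is $c_0 > 0$ with $\mathcal{H}_0 + c_0 \geq \tfrac{1}{2}(1-|\Omega|/2)(-\Delta + r^2)$ as forms; taking $c = c_0 + \|V\|_{L^2\to L^2} + 1$ makes $\mathcal{H} + cI$ positive definite with $\langle(\mathcal{H}+cI)u,u\rangle \gtrsim \langle(-\Delta+r^2+1)u,u\rangle \gtrsim \|u\|_{H^1 \cap L^{2,1}}^2$.

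Next I would upgrade this coercivity to a full graph-norm estimate: since $\mathcal{H}+cI$ is an elliptic operator with polynomially growing potential, elliptic regularity together with the commutator structure gives $\|u\|_X = \|u\|_{H^2 \cap L^{2,2}} \lesssim \|(\mathcal{H}+cI)u\|_{L^2} + \|u\|_{L^2}$, and combined with the positivity one gets $\|u\|_X \lesssim \|(\mathcal{H}+cI)u\|_{L^2}$. This shows $(\mathcal{H}+cI): X \to L^2(\mathbb{R}^2)$ is bounded below with closed range; self-adjointness (or a direct Lax--Milgram argument on the form domain) gives surjectivity, so $(\mathcal{H}+cI)^{-1}: L^2 \to X$ is bounded, and a fortiori $(\mathcal{H}+cI)^{-1}: X \to X$ is bounded.

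Finally, compactness of $(\mathcal{H}+cI)^{-1}: X \to X$ follows because the embedding $H^2(\mathbb{R}^2) \cap L^{2,2}(\mathbb{R}^2) \hookrightarrow H^2(\mathbb{R}^2) \cap L^{2,2}(\mathbb{R}^2)$ factors through the compact embedding associated with the harmonic oscillator: the operator $(-\Delta + r^2)^{-1}$ is compact on $L^2(\mathbb{R}^2)$ (its spectrum is the discrete set $\{2(|m|+2n+1)\}$ from Section 2.1), hence $(\mathcal{H}+cI)^{-1}$, which maps $X$ boundedly into $X$ and which differs from a multiple of $(-\Delta+r^2)^{-1}$-type operators by relatively compact perturbations, is compact. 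More directly: any bounded sequence $\{u_k\} \subset X$ has $\{(\mathcal{H}+cI)^{-1}u_k\}$ bounded in $X$; since $X$-bounded sequences are precompact in $L^2$ (Rellich for the oscillator) and the estimate $\|v\|_X \lesssim \|(\mathcal{H}+cI)v\|_{L^2}$ with $v = (\mathcal{H}+cI)^{-1}(u_k - u_j)$ controls the $X$-difference by the $L^2$-difference of $\{u_k\}$, one extracts an $X$-convergent subsequence. The main obstacle is the first step — making the form bound $\mathcal{H}_0 + c_0 \gtrsim (-\Delta + r^2)$ rigorous for $|\Omega| < 2$, in particular handling the first-order term $i\Omega\partial_\theta$ which is not relatively bounded by $-\Delta+r^2$ with small bound in the naive sense; the resolution is the factorization $-\Delta + r^2 \pm i\Omega\partial_\theta = (-\partial_r^2 - r^{-1}\partial_r) + (r \mp \tfrac{\Omega}{2r}\cdot(\text{angular momentum}))^2 + (1 - \tfrac{\Omega^2}{4})(\text{angular part})$ after completing the square mode by mode, which is exactly where the threshold $|\Omega| = 2$ enters.
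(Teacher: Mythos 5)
Your argument is correct in substance but takes a genuinely different route from the paper. The paper's own proof is very short: it reads off the spectrum of $\mathcal{H}$ from the block diagonalization into the operators $H_m(a,\Omega)$, whose eigenvalues $\mu^{\pm}_{m,n}(a,\Omega)$ in (\ref{mu-plus-minus}) are explicit perturbations of $\lambda_{m,n}-\lambda_{m_0,0}\mp\Omega(m-m_0)$; for $|\Omega|<2$ these are bounded below and do not accumulate at a finite value, so $\mathcal{H}+cI$ is positive definite and invertible for $c$ large, and compactness then follows, exactly as in your last step, from the compactness of the embedding $X\hookrightarrow L^2(\mathbb{R}^2)$ composed with the bounded inverse. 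You instead prove coercivity by splitting $\mathcal{H}=\mathcal{H}_0+V$ with $V$ bounded and completing the square mode by mode, which is where the threshold enters via $\mp\Omega m\le \tfrac{|\Omega|}{2}\bigl(\tfrac{m^2}{r^2}+r^2\bigr)$, giving $\mathcal{H}_0\ge (1-\tfrac{|\Omega|}{2})(-\Delta+r^2)$ in the form sense; this avoids the explicit eigenvalue expansions (hence does not lean on $a$ being small) at the cost of an extra step (Kato--Rellich or elliptic regularity in the Hermite basis) to identify the graph norm of $\mathcal{H}+cI$ with the $X$-norm. Two caveats on your write-up: the intermediate chain based on Young's inequality, $|\Omega\langle\partial_\theta u,u\rangle|\le\tfrac{|\Omega|}{2}(\|\partial_\theta u\|^2+\|u\|^2)$ together with $-r^{-2}\partial_\theta^2\le-\Delta+r^2$, does not work as written, since Young's inequality produces the unweighted quantity $\|\partial_\theta u\|^2=\sum_m m^2\|V_m\|^2$, which is not dominated by the form of $-\Delta+r^2$ (whose $m$-th mode contributes only $2(|m|+1)\|V_m\|^2$); you flag this yourself, and your completing-the-square factorization, equivalently the sharp relative bound $|m|\le\tfrac12\lambda_{m,n}$, is the correct fix. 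Also, the leftover term in that factorization is the radial potential $(1-\tfrac{\Omega^2}{4})r^2$, not an ``angular part''; with that reading your mode-by-mode lower bound, and hence the whole proof, goes through.
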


\begin{proof}
The eigenvalues of $\mathcal{H}$ are given by $\mu_{m,n}^{\pm}(a,\Omega)$
expanded as in (\ref{mu-plus-minus}). For $\left\vert \Omega\right\vert <2$,
the eigenvalues $\mu_{m,n}^{\pm}$ are bounded from below and do not accumulate at a finite value.
Therefore, there is a
positive constant $c$ such that the bounded operator $\mathcal{H+}cI : X \to L^2(\mathbb{R}^2)$
is positive definite and invertible.  Since $X$ is compactly included
in $L^{2}(\mathbb{R}^{2})$, then the inverse operator $\left(  \mathcal{H}%
+cI\right)  ^{-1}:X\hookrightarrow L^{2}\rightarrow X$ is compact.
\end{proof}

\begin{remark}
Observe in (\ref{mu-plus-minus}) that for $\left\vert \Omega\right\vert =2$
the eigenvalues $\mu_{m,n}^{\pm}(a,\Omega)$ can accumulate at a finite value
as $a \to0$, while for $\left\vert \Omega\right\vert >2$ the eigenvalues
$\mu_{m,n}^{\pm}(a,\Omega)$ are unbounded both from above and from below. As a
result, the operator $\mathcal{H}+cI: X \rightarrow L^{2}(\mathbb{R}^{2})$
does not have a closed range for $|\Omega| = 2$, its
inverse $\left(  \mathcal{H}+cI\right)  ^{-1}: L^{2}(\mathbb{R}^{2})
\rightarrow X$ is not bounded, and the inverse operator $\left(
\mathcal{H}+cI\right)  ^{-1} : X \to X$ is not compact.
\end{remark}

By Lemma \ref{Lem1}, the Jacobian operator $\mathcal{H}$ is Fredholm of the
degree zero for $\Omega\in[0,2)$. Also the restricted operator $\mathcal{H}%
^{D_{m-m_{0}}}(\Omega)$ is a self-adjoint Fredholm operator for every
$\Omega\in\lbrack0,2)$. Since $\mathcal{H}^{D_{m-m_{0}}}(\Omega)$ is
invertible for $\Omega$ close but different from the non-resonant bifurcation
curve $\Omega_{m,n}$, then the Morse index $n^{D_{m-m_{0}}}(\Omega)$ of
$\mathcal{H}^{D_{m-m_{0}}}(\Omega)$ restricted to $\ker\mathcal{H}%
^{D_{m-m_{0}}}(\Omega_{m,n})$ for $\Omega$ close to $\Omega_{m,n}$ is well
defined. Let $\eta^{D_{m-m_{0}}}(\Omega_{m,n})$ be the net crossing number of
eigenvalues of $\mathcal{H}^{D_{m-m_{0}}}(\Omega)$ defined by
\begin{equation}
\eta^{D_{m-m_{0}}}(\Omega_{m,n}):=\lim_{\varepsilon\rightarrow0}\left|
n^{D_{m-m_{0}}}(\Omega_{m,n}+\varepsilon)-n^{D_{m-m_{0}}}(\Omega
_{m,n}-\varepsilon)\right|. \label{index-def}%
\end{equation}
If $\Omega_{m,n}$ is a non-resonant bifurcation curve, then it is obvious that
$\mathcal{\eta}^{D_{m,n}}(\Omega_{m,n})=1$. The following proposition gives
the global bifurcation result for each non-resonant bifurcation curve.

\begin{proposition}
Fix $a>0$ sufficiently small, if $\eta^{D_{m-m_{0}}}(\Omega_{m,n})$ is odd for
$\Omega_{m,n}\in\lbrack0,2)$, the nonlinear operator $g(v;a,\Omega)$ has a
global bifurcation of solutions $(v,\Omega)$ in $\mathrm{Fix}(D_{m_{-}m_{0}%
})\times\lbrack0,2)$ arising from $(v,\Omega)=(0,\Omega_{m,n})$.
\label{Theorem-global}
\end{proposition}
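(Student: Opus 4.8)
The plan is to recast the equation $g(v;a,\Omega)=0$ (equivalently, $\mathbf{g}^{D_{m-m_{0}}}(\mathbf{v};a,\Omega)=0$ on $X\cap\mathrm{Fix}(D_{m-m_{0}})$) as a fixed-point equation $\mathbf v = T(\mathbf v,\Omega)$ with $T$ compact, and then to apply the global bifurcation theorem of Rabinowitz, Theorem 3.4.1 in \cite{Ni2001}. Write $\mathbf{g}(\mathbf v;a,\Omega)=\mathcal{H}(a,\Omega)\mathbf v+\mathbf N(\mathbf v;a)$, where $\mathcal{H}(a,\Omega)$ is the Jacobian operator in (\ref{Hessian-definition})--(\ref{Hessian}) and $\mathbf N$ collects the quadratic and cubic terms of (\ref{operator-g}). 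Fix $a>0$ small and $\Omega\in[0,2)$. By Lemma \ref{Lem1}, there is $c>0$ (uniform on compact $\Omega$-subintervals of $(-2,2)$) such that $\mathcal{H}(a,\Omega)+cI:X\to L^{2}(\mathbb{R}^{2})$ is invertible with bounded inverse, and composing that inverse with the compact embedding $X\hookrightarrow L^{2}(\mathbb{R}^{2})$ shows that
\[
T(\mathbf v,\Omega):=\bigl(\mathcal{H}(a,\Omega)+cI\bigr)^{-1}\bigl(c\,\mathbf v-\mathbf N(\mathbf v;a)\bigr)
\]
is compact on bounded subsets of $X\times[0,2-\delta]$ for every $\delta>0$, since $\mathbf v\mapsto c\,\mathbf v-\mathbf N(\mathbf v;a)$ sends bounded sets of $X$ into bounded sets of $L^{2}(\mathbb{R}^{2})$ (the Sobolev embedding $H^{2}(\mathbb{R}^{2})\hookrightarrow L^{\infty}(\mathbb{R}^{2})$ controls the nonlinearity). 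Because $\mathbf g$ is $O(2)$-equivariant and preserves $\mathrm{Fix}(D_{m-m_{0}})$, the restriction $T^{D_{m-m_{0}}}$ is a well-defined compact map on $(X\cap\mathrm{Fix}(D_{m-m_{0}}))\times[0,2)$, and solving $\mathbf g^{D_{m-m_{0}}}=0$ amounts to finding zeros of $I-T^{D_{m-m_{0}}}(\cdot,\Omega)$, a compact perturbation of the identity to which Leray--Schauder degree theory applies.

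Next I would track the Leray--Schauder index of the trivial solution $\mathbf v=0$ across $\Omega_{m,n}(a)$. For $\Omega$ near but distinct from $\Omega_{m,n}(a)$ the operator $\mathcal{H}^{D_{m-m_{0}}}(\Omega)$ is invertible by the non-resonance hypothesis, so $I-T^{D_{m-m_{0}}}(\cdot,\Omega)$ is nonsingular at $\mathbf v=0$ and its index equals $(-1)^{\beta(\Omega)}$, where $\beta(\Omega)$ counts, with multiplicity, the eigenvalues of $\partial_{\mathbf v}T^{D_{m-m_{0}}}(0,\Omega)$ exceeding $1$. By the spectral correspondence between $(\mathcal{H}^{D_{m-m_{0}}}(\Omega)+cI)^{-1}$ and $\mathcal{H}^{D_{m-m_{0}}}(\Omega)$, $\beta(\Omega)$ is exactly the Morse index of $\mathcal{H}^{D_{m-m_{0}}}(\Omega)$ restricted to the relevant blocks. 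Hence the parity of the index of $\mathbf v=0$ changes as $\Omega$ crosses $\Omega_{m,n}(a)$ precisely when the net crossing number $\eta^{D_{m-m_{0}}}(\Omega_{m,n})$ of (\ref{index-def}) is odd, which is the standing assumption (and equals $1$ for a non-resonant curve, where a single simple eigenvalue crosses, so the index simply switches sign). With this index jump, Rabinowitz's theorem yields a connected component $\mathcal{C}$ of the closure of the nontrivial solution set $\{(\mathbf v,\Omega):\mathbf g^{D_{m-m_{0}}}(\mathbf v;a,\Omega)=0,\ \mathbf v\neq 0\}$ in $\mathrm{Fix}(D_{m-m_{0}})\times[0,2)$ that contains $(0,\Omega_{m,n}(a))$; this $\mathcal{C}$ also contains the local branch produced in Propositions \ref{ThmLoc}--\ref{ThmLast}.

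Finally I would invoke the Rabinowitz alternative for $\mathcal{C}$: either $\mathcal{C}$ is unbounded in $\mathrm{Fix}(D_{m-m_{0}})\times[0,2)$, or it returns to the trivial branch at some other point $(0,\Omega_{*})$. Since bifurcation points of the trivial branch are the zeros of the functions $\mu_{j,k}^{-}(a,\cdot)$, which are isolated in $\Omega$, the set of candidate $\Omega_{*}$ is discrete, so the second alternative means a genuine return to another bifurcation point along the primary branch. Unwinding the first alternative over the non-compact parameter range $[0,2)$ gives exactly the three possibilities stated in the Remark: $\mathcal{C}$ reaches $\Omega=0$, or it approaches $\Omega=2$, or $\|\mathbf v\|_{X}\to\infty$ at some finite $\Omega\in[0,2)$. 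The main obstacle — and the reason the analysis is confined to $[0,2)$ — is the loss of compactness of $(\mathcal{H}+cI)^{-1}$ at $\Omega=2$ recorded in the Remark after Lemma \ref{Lem1}, due to accumulation of the eigenvalues $\mu_{m,n}^{\pm}(a,\Omega)$ as $a\to 0$; this is handled by performing the degree computation on each compact slab $[0,2-\delta]$ and letting $\delta\to 0$, which is precisely why ``$\mathcal{C}$ approaches $\Omega=2$'' must appear among the outcomes rather than be excluded.
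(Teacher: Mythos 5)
Your proposal is correct and follows essentially the same route as the paper: both convert $g=0$ into a compact perturbation of the identity via the operator $(\mathcal{H}+cI)^{-1}$ from Lemma \ref{Lem1}, restrict to $\mathrm{Fix}(D_{m-m_{0}})$ by equivariance, identify the Leray--Schauder index of the trivial solution with $(-1)$ to the Morse index so that oddness of $\eta^{D_{m-m_{0}}}(\Omega_{m,n})$ forces a nonzero degree jump, and then invoke the global Rabinowitz theorem (Theorem 3.4.1 of \cite{Ni2001}). Your additional remarks on exhausting $[0,2)$ by compact slabs near $\Omega=2$ and on the discreteness of the candidate return points are consistent refinements of the paper's argument rather than a different method.
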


\begin{proof}
Since $X$ is a Banach algebra with respect to pointwise multiplication and
$g(0;a,\Omega)=0$, we obtain the expansion
\[
g(v;a,\Omega)=\mathcal{H(}\Omega)v+\mathcal{O}_X(v^2).
\]
We can apply the global Rabinowtz theorem to the nonlinear operator
\[
f(v,\Omega)=\left(  \mathcal{H}+cI\right)  ^{-1}g(v;a,\Omega)v = Iv -
c \left(\mathcal{H}+cI\right)^{-1} v + \mathcal{O}_X(v^2),
\]
where $c>0$ is defined in Lemma \ref{Lem1}. The operator $f$ is also
equivariant and can be restricted to $\mathrm{Fix}(D_{m-m_{0}})$ denoted by
$f^{D_{m-m_{0}}}$. The index for bifurcation of $f$ in $\mathrm{Fix}%
(D_{m-m_{0}})$ is up to an orientation factor, the jump on the local indices
as $\Omega$ crosses $\Omega_{m,n}$. That is, since $\eta^{D_{m-m_{0}}}%
(\Omega_{m,n})$ is odd, then
\begin{align}
&  \deg\left(  \left\Vert x\right\Vert -\varepsilon,f^{D_{m-m_{0}}}%
(x,\Omega);B_{2\varepsilon}\times B_{2\rho}\right) \nonumber\\
&  =\deg(f^{D_{m-m_{0}}}(x,\Omega-\rho);B_{2\varepsilon})-\deg(f^{D_{m-m_{0}}%
}(x,\Omega+\rho);B_{2\varepsilon})\nonumber\\
&  =\pm\left(  1-\left(  -1\right)  ^{\eta^{D_{m-m_{0}}}}\right)
=\pm2\text{,} \label{BIndex}%
\end{align}
where $B_{2\varepsilon}$ and $B_{2\rho}$ are ball of radius $2\varepsilon$ and
$2\rho$ around $0\in X\cap\mathrm{Fix}(D_{m-m_{0}})$ and $\Omega_{m,n}%
\in\lbrack0,2)$, respectively.
\end{proof}

\begin{remark}
If the branch from $(0,\Omega_{m,n})$ returns to another bifurcation point
$(0,\Omega_{m^{\prime},n^{\prime}})$, then the sum of all the bifurcation
indices (\ref{BIndex}) at the bifurcation points has to be equal zero.
Therefore, the knowledge of the exact factor $\pm$ in (\ref{BIndex}) is
helpful to obtain information of where the branches can return. The exact
factor $\pm$ in (\ref{BIndex}) can be computed for all the bifurcation curves
using the fact that
\[
\deg(f^{D}(x,\Omega);B_{2\varepsilon})=(-1)^{n^{D_{m-m_0}}(\Omega)}\text{,}%
\]
since $\mathcal{H+}cI$ is positive definite. For example, for the last bifurcation from
$\Omega_{m_{0}+1,0}$ with $1 \leq m_{0}\leq16$, the exact index is
\[
\deg\left(  \left\Vert x\right\Vert -\varepsilon,f^{D_{1}}(x,\Omega
);B_{2\varepsilon}\times B_{2\rho}\right)  =(-1)^{m_{0}}-(-1)^{m_{0}%
-1}=(-1)^{m_{0}}2\text{.}%
\]
Therefore, this branch can return to a single bifurcation point $\Omega_{0}$
only if the latter point has index $-2 (-1)^{m_{0}}$.
\end{remark}

\begin{remark}
Proposition \ref{Theorem-global} provides a proof of the claim in item (iii) of Theorem \ref{theorem-main}
that the bifurcations in the interval $[a^2 D_{m_0},2-a^2 C_{m_0}]$ are global.
\end{remark}

\section{Individual vortices in the multi-vortex configurations}

We can assume $a>0$ in the expansions (\ref{ua}) and (\ref{branch-1}) for the primary branch after a
change of phase. Also, we can choose the sign of $b$ by a shift of $\theta$,
i.e. we can assume $b>0$ in the expansions (\ref{new-root-v}) and
(\ref{last-root-v}) for the secondary branch. Here we analyze the location of
individual vortices in the multi-vortex configurations bifurcating along the
secondary branch.

First, we prove that the total vortex charge is preserved near the origin when
the secondary branch bifurcates off from the primary branch.

\begin{lemma}
\label{Preservation}Fix $R>0$. There exists $b_{0}>0$ such
that the degree of the bifurcating solution $U$ along the secondary
branch on the circle of the radius $R$ is $m_{0}$ for every $b\in
\lbrack0,b_{0})$.
\end{lemma}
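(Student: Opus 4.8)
The plan is to show that the winding number (topological degree) of $U$ restricted to the circle of radius $R$ is a continuous, integer-valued function of the bifurcation parameter $b$, hence locally constant, and equal to $m_0$ at $b=0$ where $U = e^{i m_0 \theta}\psi_{m_0}(r;a)$ with $\psi_{m_0}(r;a)>0$ for $r>0$ (by the discussion following \eqref{HGfunctions}). The degree of the map $\theta \mapsto U(R,\theta)/|U(R,\theta)| \in S^1$ is well-defined as long as $U(R,\theta)\neq 0$ for all $\theta$, so the first task is to verify non-vanishing of $U$ on the fixed circle $|x|=R$ for small $b$.

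\emph{First} I would fix $R>0$ and note that, by \eqref{new-root-v}--\eqref{f-} (respectively \eqref{last-root-v}--\eqref{ff} for the last curve), the bifurcating solution has the form $U(r,\theta) = e^{im_0\theta}\psi_{m_0}(r;a) + v(r,\theta;a,\Omega(a,b))$ with $\|v\|_X = \mathcal{O}_X(b)$. \emph{Second}, since $X = H^2(\mathbb{R}^2)\cap L^{2,2}(\mathbb{R}^2)$ embeds continuously into $C^0(\mathbb{R}^2)$ (indeed into a space of continuous functions decaying at infinity, by Sobolev embedding in two dimensions for $H^2$), the restriction of $v$ to the compact circle $\{|x|=R\}$ satisfies $\sup_{|x|=R}|v| \lesssim \|v\|_X = \mathcal{O}(b)$. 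On the other hand $\inf_{|x|=R}|e^{im_0\theta}\psi_{m_0}(R;a)| = \psi_{m_0}(R;a) > 0$ is a strictly positive constant independent of $b$. \emph{Third}, therefore there is $b_0 > 0$ such that for all $b \in [0,b_0)$ we have $\sup_{|x|=R}|v| < \psi_{m_0}(R;a)$, so $U$ does not vanish on $\{|x|=R\}$ and its degree on that circle is defined.

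\emph{Finally}, by the homotopy invariance of the degree (equivalently, since $U$ and $e^{im_0\theta}\psi_{m_0}(r;a)$ are connected along $\{|x|=R\}$ by the straight-line homotopy $t\mapsto e^{im_0\theta}\psi_{m_0}(R;a) + t\, v$, which is nowhere zero on $\{|x|=R\}$ by the same inequality), the degree of $U$ on the circle of radius $R$ equals the degree of $\theta \mapsto e^{im_0\theta}\psi_{m_0}(R;a)$, which is $m_0$ because $\psi_{m_0}(R;a)>0$. This holds for every $b\in[0,b_0)$, which proves the lemma. The main obstacle is essentially bookkeeping: making sure the $X$-norm bound on $v$ really does control the pointwise supremum on the fixed circle uniformly in $b$, i.e. invoking the correct Sobolev embedding $H^2(\mathbb{R}^2)\hookrightarrow C^0$, and observing that the lower bound $\psi_{m_0}(R;a)$ depends only on the fixed parameters $a$ and $R$ and not on the bifurcation parameter $b$; once these are in place the topological argument is immediate.
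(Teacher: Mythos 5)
Your proposal is correct and follows essentially the same route as the paper: both use the positivity of $\psi_{m_0}(R;a)$, the $\mathcal{O}(b)$ smallness of the perturbation in $X=H^2(\mathbb{R}^2)\cap L^{2,2}(\mathbb{R}^2)$ together with the embedding $X\hookrightarrow C^0(\mathbb{R}^2)$ to get non-vanishing of $U$ on the circle $r=R$, and then homotopy invariance (local constancy in $b$) of the degree, which equals $m_0$ at $b=0$. Your version merely makes the homotopy step slightly more explicit than the paper's.
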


\begin{proof}
We recall that $a>0$ and $\psi_{m_{0}}(r)>0$ for every $r\in(0,\infty)$. For
every fixed $R>0$, there exists a sufficiently small $b_{0}>0$ such that the
bifurcating solution $U(r,\theta)$ in Propositions
\ref{proposition-secondary-bifurcations} and
\ref{proposition-last-bifurcation} is nonzero at $r=R$ for every $b\in
\lbrack0,b_{0})$. This follows from the smallness of the
error terms  in the expansions (\ref{new-root-v}) and
(\ref{last-root-v}) in the norm of $X=H^{2}(\mathbb{R}^{2})\cap L^{2,2}(\mathbb{R}^{2})$, which is
embedded in $C^{0}(\mathbb{R}^{2})$. Since $U(r,\theta)$ is nonzero at $r=R$,
the degree of $U$ on the disk $B_R$ of radius $R$
is well defined and does not change for every
$b\in\lbrack0,b_{0})$. Since the degree is $m_{0}$ at $b=0$, it
remains $m_{0}$ for every $b\in\lbrack0,b_{0})$.
\end{proof}

\begin{remark}
Because $\psi_{m_{0}}(r)\rightarrow0$ as $r\rightarrow\infty$, 
we are not able to claim that additional zeros of $U(r,\theta)$
cannot come from infinity as $b\neq0$. If such zeros exist, additional
individual vortices come from infinity on a very small background 
$U(r,\theta)$.
\end{remark}

Next, we rewrite the eigenfunctions $e_{m,n}(r)$ of the linear eigenvalue
problem (\ref{def.vmn}) in the form
\begin{equation}
e_{m,n}(r)=p_{m,n}(r)e^{-r^{2}/2}, \label{Hermite-polynomials}%
\end{equation}
where $p_{m,n}(r)$ is a polynomial of degree $\left\vert m\right\vert +2n$, which is chosen
to be positive for $r$ near zero. The first eigenfunctions $e_{m,0}(r)$ and
$e_{m,1}(r)$ in (\ref{HGfunctions}) and (\ref{H-1-functions}) are given by
(\ref{Hermite-polynomials}) with
\begin{equation}
p_{m,0}(r)=\frac{\sqrt{2}}{\sqrt{m!}}r^{m},\quad p_{m,1}(r)=\frac{\sqrt{2}%
}{\sqrt{(m+1)!}}r^{m}(m+1-r^{2}). \label{Hermite-poly-first}%
\end{equation}

The following proposition deals with the secondary bifurcations described in Proposition
\ref{ThmLoc}.

\begin{proposition}
Let $0<b\lesssim a$ and consider the bifurcating solution to the stationary GP
equation (\ref{EcStat}) in the form (\ref{bif}) given by the expansions
(\ref{ua}) and (\ref{new-root-v}). Let $r_{0}$ be the first positive zero of
the function
\begin{equation}
z(r):=ap_{m_{0},0}(r)-bp_{|m-2m_{0}|,n}(r) \label{first-zero}%
\end{equation}
and assume that it is a simple zero\footnote{The assumption is always
satisfied if $0 < b \ll a$ since $p_{|m-2m_{0}|,n}(r)$ is
positive for small $r$ and $|m-2m_{0}| < m_{0}$.}. Then, the bifurcating
solution has simple zeros arranged in the $\left(  m-m_{0}\right)  $-polygon on
a circle of radius $\rho$ with $\rho=r_{0}+\mathcal{O}(a^{2})$.
\label{proposition-zeros}
\end{proposition}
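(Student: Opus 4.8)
The plan is to factor out the gauge phase, expand the bifurcating solution to leading order, and then use the dihedral symmetry of the branch to reduce the zero-counting to a one-dimensional problem on a symmetry ray. Write the solution of (\ref{bif}) as $U(r,\theta)=e^{im_{0}\theta}\Phi(r,\theta)$ with $\Phi(r,\theta):=\psi_{m_{0}}(r;a)+e^{-im_{0}\theta}v(r,\theta)$, and put $N:=m-m_{0}\ge1$. Substituting (\ref{ua}), (\ref{new-root-v}), (\ref{f-}) and using $e^{i(2m_{0}-m)\theta}=e^{im_{0}\theta}e^{-iN\theta}$ gives
\[
\Phi(r,\theta)=a\,e_{m_{0},0}(r)+b\,e_{|m-2m_{0}|,n}(r)\,e^{-iN\theta}+\mathcal{E}(r,\theta),
\]
where, since $0<b\lesssim a$, the remainder obeys $\mathcal{E}=\mathcal{O}_{X}(a^{3})$ in $X=H^{2}(\mathbb{R}^{2})\cap L^{2,2}(\mathbb{R}^{2})$ (it absorbs the $\mathcal{O}_{X}(a^{2}b,ab^{2},b^{3})$ terms of (\ref{new-root-v})--(\ref{f-}) and the $\mathcal{O}_{H^{1}_{r}}(a^{3})$ correction to $\psi_{m_{0}}$). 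Because $\mathbf{v}\in\mathrm{Fix}(D_{N})$ and $\psi_{m_{0}}$ is real and radial, $\Phi$ inherits from (\ref{sym}) the symmetries $\Phi(r,\theta)=\overline{\Phi(r,-\theta)}$ and $\Phi(r,\theta+2\pi/N)=\Phi(r,\theta)$; in particular $\Phi(\,\cdot\,,\theta_{*})$ is real-valued on the reflection ray $\theta_{*}:=\pi/N$, and a single zero of $\Phi$ on this ray generates, by $2\pi/N$-periodicity, a regular $N$-gon of zeros of $U$ at the same radius.

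On the ray $\theta=\theta_{*}$ one has $e^{-iN\theta_{*}}=-1$, so writing $e_{j,n}(r)=p_{j,n}(r)e^{-r^{2}/2}$ as in (\ref{Hermite-polynomials}) gives $\Phi(r,\theta_{*})=z(r)\,e^{-r^{2}/2}+\mathcal{E}(r,\theta_{*})$ with $z$ exactly the polynomial (\ref{first-zero}) and $\mathcal{E}(\,\cdot\,,\theta_{*})$ real. The curves $\Omega_{m,n}$ of Proposition \ref{ThmLoc} satisfy $\Omega_{m,n}(0)\in(0,2)$, hence $\mu_{m,n}^{-}(0)<0$, which means $\deg p_{|m-2m_{0}|,n}=|m-2m_{0}|+2n<m_{0}=\deg p_{m_{0},0}$; since $p_{m_{0},0}$ is a positive monomial, $z(0^{+})<0<z(r)$ for $r$ beyond an $(a,b)$-independent constant, so the first positive zero $r_{0}$ of $z$ is bounded above uniformly and one may localise on a compact annulus $I\ni r_{0}$ on which $e^{-r^{2}/2}$ stays bounded above and below. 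On $I$ I would upgrade the $X$-bound on $\mathcal{E}$ to a $C^{1}(I)$ bound by elliptic regularity applied to (\ref{EcStat}) (the bifurcating $U$ is smooth; this is the embedding used in Lemma \ref{Preservation}, pushed one derivative further). Since $z(r_{0})=0$ and $z'(r_{0})\neq0$ by hypothesis — indeed $z'(r_{0})>0$ because $r_{0}$ is the first zero and $z(0^{+})<0$ — the implicit function theorem applied to $r\mapsto z(r)e^{-r^{2}/2}+\mathcal{E}(r,\theta_{*})$ yields a unique zero $\rho\in I$, with $\rho=r_{0}+\mathcal{O}(a^{2})$.

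The $2\pi/N$-periodicity of $\Phi$ then places zeros of $U$ at $(\rho,\theta_{*}+2\pi j/N)$ for $j=0,\dots,N-1$: a regular $N$-gon of radius $\rho=r_{0}+\mathcal{O}(a^{2})$. No other zeros of $U$ lie near the circle $r=\rho$, because the modulus of the leading part $a\,e_{m_{0},0}(r)+b\,e_{|m-2m_{0}|,n}(r)e^{-iN\theta}$ — whose square equals $e^{-r^{2}}\bigl(a^{2}p_{m_{0},0}^{2}+2ab\,p_{m_{0},0}p_{|m-2m_{0}|,n}\cos(N\theta)+b^{2}p_{|m-2m_{0}|,n}^{2}\bigr)$ and vanishes only when $z(r)=0$ and $\cos(N\theta)=-1$ — is bounded below on $I\times S^{1}$ away from fixed neighbourhoods of these $N$ points, while $\mathcal{E}$ is of higher order. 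For simplicity of each zero I would use that, by the reflection symmetry, $\partial_{r}\Phi(\rho,\theta_{*})$ is real and $\partial_{\theta}\Phi(\rho,\theta_{*})$ is purely imaginary, with leading parts $e^{-r_{0}^{2}/2}z'(r_{0})$ and $iN\,b\,e_{|m-2m_{0}|,n}(r_{0})$ respectively, both nonzero for small $a$ (the second because $b\,e_{|m-2m_{0}|,n}(r_{0})=a\,e_{m_{0},0}(r_{0})>0$ at the zero of $z$); hence $D\Phi$, and therefore $DU$, is invertible at $(\rho,\theta_{*})$, the zero is simple with Brouwer index $+1$ (a charge-one vortex, consistent with item (v) of Theorem \ref{theorem-main}), and the $D_{N}$-symmetry transports this to the remaining $N-1$ vertices.

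The step I expect to be the main obstacle is the quantitative error control: turning the Banach-space estimates of Proposition \ref{ThmLoc} into $C^{1}(I)$ bounds on $\mathcal{E}$ that are uniform over the admissible range of $b$, and confirming that the implicit-function correction to $r_{0}$ is genuinely $\mathcal{O}(a^{2})$ — the delicate point being that $|z'(r_{0})|$ can degenerate as $b/a\to0$, so it must be compared with $\|\mathcal{E}\|_{C^{1}(I)}$ and with the lower bound used to rule out spurious zeros near the circle. Once the perturbation is controlled, the $N$-gon structure and the nondegeneracy of the individual vortices follow from the dihedral symmetry and the simplicity of $r_{0}$.
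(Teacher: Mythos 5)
Your proposal is correct and follows essentially the same route as the paper's proof: factor out the phase $e^{im_0\theta}$, reduce to the leading-order two-mode form $a e_{m_0,0}+b e_{|m-2m_0|,n}e^{-i(m-m_0)\theta}$ plus a small remainder constrained by the $D_{m-m_0}$ symmetries (\ref{sym}), restrict to the reflection ray $\theta=\pi/(m-m_0)$ where the function is real and equals $z(r)e^{-r^2/2}$ up to the error, and apply the implicit function theorem at the simple zero $r_0$, with the dihedral symmetry generating the polygon. Your additional remarks (the transversality/degree computation at each vertex and the concern about uniform $C^1$ error control as $b/a\to0$) correspond to material the paper treats, respectively, in the discussion following the proposition and only implicitly, so they are consistent refinements rather than a different method.
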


\begin{proof}
By combining (\ref{ua}), (\ref{bif}), (\ref{new-root-v}), and (\ref{f-}), we
obtain an asymptotic representation of the bifurcating solutions $U$ in the
form
\[
U(r,\theta)=ae_{m_{0},0}(r)e^{im_{0}\theta}+be_{|m-2m_{0}|,n}(r)e^{i\left(
2m_{0}-m\right)  \theta}+\mathcal{O}_X(a^{3},a^{2}b,ab^{2},b^{3}).
\]
Zeros of $U(r,\theta)$ are equivalent to the zeros of
\begin{equation}
e^{-im_{0}\theta}U(r,\theta)=ae_{m_{0},0}(r)+be_{|m-2m_{0}|,n}(r)e^{-i(m-m_{0}%
)\theta}+\phi(r,\theta), \label{zeros}%
\end{equation}
where $\phi = \mathcal{O}_X(a^{3},a^{2}b,ab^{2},b^{3})$ satisfies the
symmetry constraints (\ref{sym}).

The function $e^{-i\left(  m-m_{0}\right)  \theta}$ is real only if
$\theta=k\zeta$ and $\theta=(k+1/2)\zeta$, where $\zeta=2\pi/\left(
m-m_{0}\right)  $ and $k\in\mathbb{Z}$. For these angles, the function
$\phi(\theta,r)$ is real by the symmetries (\ref{sym}). Therefore, the
function (\ref{zeros}) is real if and only if $\theta=k\zeta$ and
$\theta=(k+1/2)\zeta$. These two choices of 
angles give two choices of the $(m-m_{0})$-polygons of zeros along
a circle of radius $\rho$.

To determine the small radius $\rho$ in the limit $b\rightarrow0$, we
factorize the factor $e^{-r^{2}/2}$ in the eigenfunctions
(\ref{Hermite-polynomials}) and truncate the error term $\phi$.
Since we assume that $p_{m,n}(r)$ is
positive for $r$ near zero, then the right-hand side of (\ref{zeros})
is strictly positive for $\theta=k\zeta$ and $r \gtrsim 0$.
For $\theta=(k+1/2)\zeta$, we have $e^{-i\left(  m-m_{0}\right)  \theta}=-1$, 
hence the right-hand side of (\ref{zeros}) has a zero only if $z(r)$ 
in (\ref{first-zero}) has a positive root.

Let $r_{0}$ be the first positive root of $z$ in (\ref{first-zero}) and assume that it is
simple. Since the function $\phi(r,\theta)$ is small in the norm of
$X=H^{2}(\mathbb{R}^{2})\cap L^{2,2}(\mathbb{R}^{2})$ by Proposition
\ref{ThmLoc}, an application of the implicit function theorem proves that the
representation (\ref{zeros}) with $\theta=\zeta/2=\pi/(m-m_{0})$ has the $(m-m_{0})$ polygon of simple zeros at
the circle of radius $\rho$, where $\rho=r_{0}+\mathcal{O}(a^{2})$.
\end{proof}

\begin{remark}
For the last bifurcation with $m=m_{0}+1$ described in Proposition
\ref{ThmLast}, a similar result cannot be proven because the small parameter
$a$ is scaled out from the expansion (\ref{last-root-v}).
The remainder term $\phi = \mathcal{O}_X(a b^2)$ in the representation for $U(r,\theta)$ may give a contribution
to the distribution of individual vortices, which is comparable with the leading-order
term $a \psi_{m_0}(r) e^{i m_0 \theta}$ and the bifurcating mode
$a b f_{m_0+1,0}(r,\theta;a)$.
\label{remark-last}
\end{remark}

In the rest of this section, we study individual vortices in the bifurcating multi-vortex configurations.

\subsection{$(m-m_{0})$-polygons of vortices}

Polygons made of vortices rotating at a constant speed have been studied 
for many models: fluids, BECs and superconductors. It has been
found that these relative equilibria of $m$ vortices are stable for $m\leq7$,
see, e.g., \cite{GaIz12,theo1} and references therein. We have found that
similar multi-vortex configurations appear along the secondary branches
bifurcating from the primary branch of the radially symmetric vortex of charge $m_{0}\geq2$. As an
example, we give precise information about the vortex polygons in the
particular cases $n=0$ and $n=1$. For $n=1$, the bifurcation is similar to the
bifurcation of complex multi-vortex solutions described in Lemma 3.3 of
\cite{Kap1} for $m_{0}=6$.

\begin{figure}[p]
\begin{center}
\resizebox{15.0cm}{!}{
\includegraphics{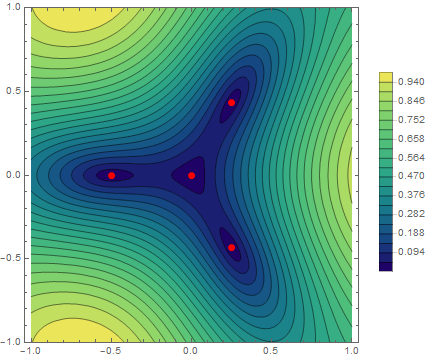} \qquad
\includegraphics{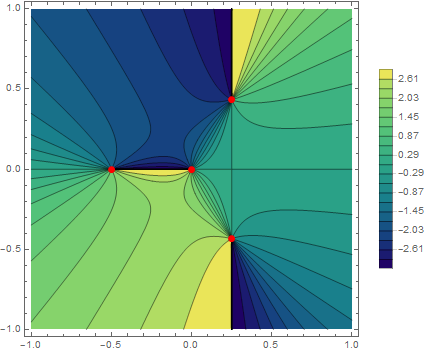}} \resizebox{15.0cm}{!}{
\includegraphics{./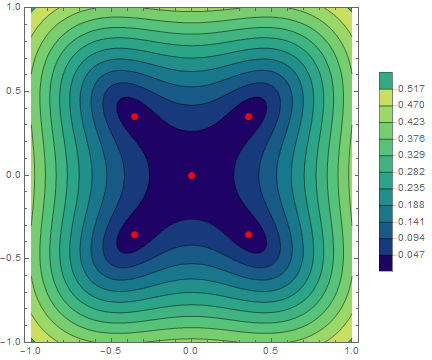} \qquad
\includegraphics{./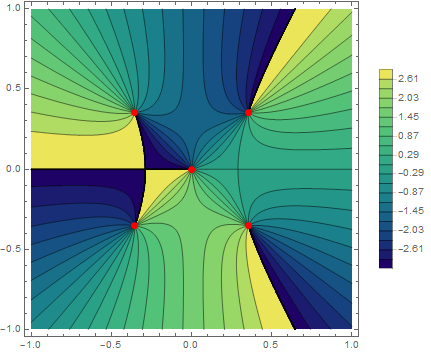}} \resizebox{15.0cm}{!}{
\includegraphics{./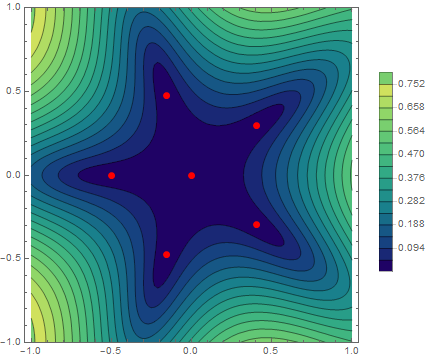} \qquad
\includegraphics{./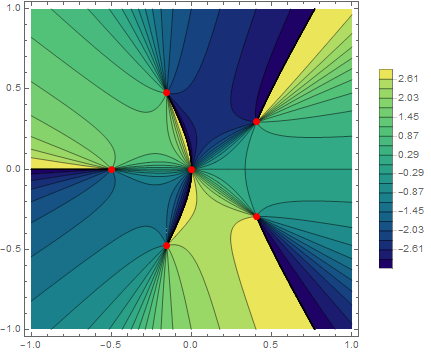}}
\end{center}
\caption{ The left and right columns illustrate the norm and phase of the
truncated solution $U$ in  (\ref{expansion-U}). Top: $m_{0}=2$ near $\Omega_{5,0}$.
Middle: $m_{0}=3$ near $\Omega_{7,0}$. Bottom: $m_{0}=3$ near $\Omega_{8,0}$. }%
\label{fig-3}%
\end{figure}

\subsubsection{Case $n=0$: Vortex polygons with a central vortex}

Bifurcation occurs at the bifurcation curve $\Omega_{m,n} \in (0,2)$ with $n=0$ and
$2m_{0}+1\leq m\leq3m_{0}-1$ (when $m_{0}\geq2$) in accordance with Lemma
\ref{lemma-count-B}, Propositions \ref{proposition-secondary-bifurcations},
\ref{ThmLoc}, and \ref{proposition-zeros}. By using
(\ref{Hermite-poly-first}) and (\ref{first-zero}), we write explicitly
\begin{align*}
z(r)  &  =ap_{m_{0},0}(r)-bp_{m-2m_{0},0}(r)\\
&  =\frac{\sqrt{2}}{\sqrt{m_{0}!}}r^{m-2m_{0}}\left(  ar^{3m_{0}-m}%
-b\frac{\sqrt{m_{0}!}}{\sqrt{(m-2m_{0})!}}\right)  ,
\end{align*}
where we recall that $2m_{0}<m<3m_{0}$. If $0<b\lesssim a$, the first positive
zero of $z(r)$ is located at
\begin{equation}
\label{root-r-0}
r_{0}=\left(  \frac{b}{a}\frac{\sqrt{m_{0}!}}{\sqrt{(m-2m_{0})!}}\right)
^{1/(3m_{0}-m)}.
\end{equation}
By Proposition \ref{proposition-zeros}, we have a $(m-m_{0})$-polygon of
simple zeros of the function
\begin{equation}
\label{expansion-U}
U(r,\theta) = \left[  a p_{m_{0},0}(r) + b p_{m-2m_{0},0}(r) e^{-i(m-m_{0}%
)\theta} \right]  e^{-r^{2}/2} e^{im_{0}\theta} + \phi(r,\theta)
e^{im_{0}\theta},
\end{equation}
at points $(r,\theta) = (\rho,\zeta/2 + k \zeta)$, where $\zeta= 2\pi/(m-m_{0})$,
$k \in\{0,1,...,m-m_{0}-1\}$, $\rho= r_{0} + \mathcal{O}(a^{2})$, and $r_{0}$
is given by (\ref{root-r-0}).

We claim that the degree of each simple zero of $U(r,\theta)$ is $+1$,
which means that each zero of $U$ on the $(m-m_{0})$-polygon represents a
vortex of charge one. By symmetry of $D_{m-m_{0}}$, each zero in the
$(m-m_{0})$-polygon has equal degree, hence it is sufficient to
compute the degree at the simple zero $(r,\theta)=(\rho,\zeta/2)$.
Using Taylor expansion of $U$ in (\ref{expansion-U}) for $b\lesssim a$, we obtain
\[
cU(r,\theta)=z^{\prime}(\rho)(r-\rho)+i b (m-m_{0}) p_{m-2m_{0},0}(\rho
)(\theta-\zeta/2)+\mathcal{O}(2),
\]
where $c\in\mathbb{C}$ is constant and $\mathcal{O}(2)$ denotes quadratic
remainder terms of the Taylor expansion. Because $m-2m_{0}<m_{0}$, we have
$z(r)<0$ for $r>0$ sufficiently small, therefore, $z^{\prime}(\rho)>0$ for
$b>0$ sufficiently small. On the other hand, $m > 2m_{0}$ and $p_{m-2m_{0}%
,0}(\rho)>0$ in the same limit. Therefore, the degree of $U(r,\theta)$
at $(r,\theta)=(\rho,\zeta/2)$ is $+1$.

In addition, $U(r,\theta)$ in  (\ref{expansion-U})  has a zero at $r=0$ if the remainder term
$e^{im_{0}\theta}\phi(r,\theta)$ is truncated. Let $d$ be the degree of $U$ in a
neighborhood of $r=0$. The degree in the disk $B_R$ of a
sufficiently large radius $R$ is equal to sum of the local degrees in the
disk. By Lemma \ref{Preservation}, we have $d+m-m_{0}=m_{0}$, hence
$d=2m_{0}-m<0$.

When the remainder term $e^{im_{0}\theta}\phi(r,\theta)$ is taken
into account in  (\ref{expansion-U}), the multiple zero of $U$ at $r=0$ may split from the origin.
However, by the symmetry in $D_{m-m_{0}}$, if
the central vortex splits, then it breaks into $m-m_{0}$ vortices of equal
charge $|d|/(m-m_{0})$. Since $m-2m_{0}\leq m_0 - 1 < m_0 + 1 \leq m-m_{0}$,
then $|d|/(m-m_0) < 1$ and the central vortex never splits.

\begin{remark}
For the case $m_{0}=2$, we have the bifurcation point $\Omega_{5,0}=2/3+\mathcal{O}(a^{2})$. 
Since $m=5$ and $n=0$, we have a configuration of three vortices of charge one that
form an equilateral triangle and a central vortex of charge $d=2m_{0}-m=-1$
(top panel of Figure \ref{fig-3}).
\end{remark}

\begin{remark}
For the case $m_{0}=3$, we have two bifurcation points $\Omega_{7,0}%
=1+\mathcal{O}(a^{2})$ and $\Omega_{8,0}=2/5+\mathcal{O}(a^{2})$. At the
former bifurcation, the bifurcating branch has four vortices of charge one
that form a square and the central vortex of charge $d=2m_{0}-m=-1$ (middle
panel of Figure \ref{fig-3}). At the latter bifurcation, the bifurcating
branch has five vortices of charge one that form an equilateral pentagon and
the central vortex of charge $d=2m_{0}-m=-2$ (bottom panel of Figure
\ref{fig-3}).
\end{remark}

\subsubsection{Case $n=1$: Vortex polygons without a central vortex}

Bifurcation occurs at the bifurcation curve $\Omega_{m,n}$ with $n=1$ and
$m_{0}+3\leq m\leq3m_{0}-3$ (when $m_{0}\geq3$) in accordance with Lemma
\ref{lemma-count-B}, Propositions \ref{proposition-secondary-bifurcations},
\ref{ThmLoc}, and \ref{proposition-zeros}. By using
(\ref{Hermite-poly-first}) and (\ref{first-zero}), we write explicitly
\begin{align*}
z(r)  &  =ap_{m_{0},0}(r)-bp_{|m-2m_{0}|,1}(r)\\
&  =\frac{\sqrt{2}}{\sqrt{m_{0}!}}r^{|m-2m_{0}|}\left(  ar^{m_{0}-|m-2m_{0}%
|}-b C_{m,m_{0}} (|m-2m_{0}|+1-r^{2})\right)  ,
\end{align*}
where
\[
C_{m,m_{0}} = \frac{\sqrt{m_{0}!}}{\sqrt{(|m-2m_{0}|+1)!}}
\]
and we recall that $|m-2m_0| < m_0$. If $0<b\ll a$, the first positive zero of
$z(r)$ is located at
\[
r_{0}=\left(  \frac{b}{a} C_{m,m_{0}}(|m-2m_{0}|+1)\right)^{1/(m_{0}%
-|m-2m_{0}|)}\left[  1 + \mathcal{O}\left(\left(  \frac{b}{a}\right)^{2/(m_{0}%
-|m-2m_{0}|)}\right) \right].
\]
By Proposition \ref{proposition-zeros}, we have the $(m-m_{0})$ polygon of
vortices on the circle of radius $\rho=r_{0}+\mathcal{O}(a^{2})$. Each vortex
has charge one by using the same arguments as in the case $n=0$.

\begin{remark}
If $m = 2m_0$, the polygon of $m_0$ charge-one vortices surrounds the origin with no
central vortex. For $m_0 = 3$, the bifurcation point is
$\Omega_{6,1} = 2/3 + \mathcal{O}(a^2)$ and the secondary branch has three charge-one vortices
located at the equilateral triangle. For $m_0 = 6$ studied in \cite{Kap1},
the bifurcation point is $\Omega_{12,1} = 4/3 + \mathcal{O}(a^2)$
and the secondary branch has six charge-one vortices at a hexagon (top panel of Figure \ref{fig-4}).
\end{remark}

\begin{remark}
If $m\neq2m_{0}$, $U(r,\theta)$ has zero at $r=0$ if the
remainder term $\phi(r,\theta)$ is truncated. By Lemma \ref{Preservation},
the central zero of $U$ corresponds to the vortex of charge $d=2m_{0}-m$,
where $-m_0 < d < m_0$. The central vortex may split into $m-m_0$ vortices of equal
charge only if $|d|$ is divisible by $m-m_0$.
\end{remark}

\subsection{Asymmetric vortex and asymmetric vortex pair}

\begin{figure}[p]
\begin{center}
\resizebox{15.0cm}{!}{
\includegraphics{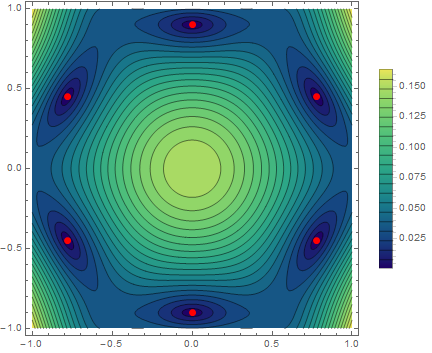} \qquad
\includegraphics{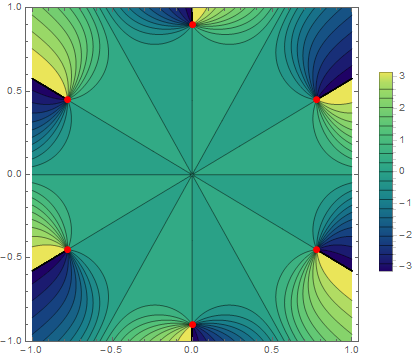}} \resizebox{15.0cm}{!}{
\includegraphics{./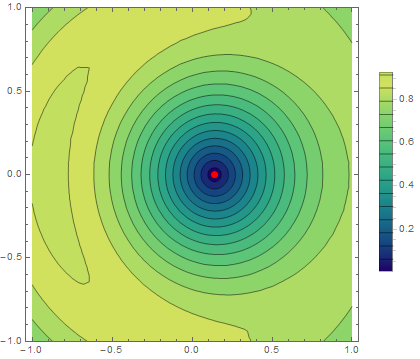} \qquad
\includegraphics{./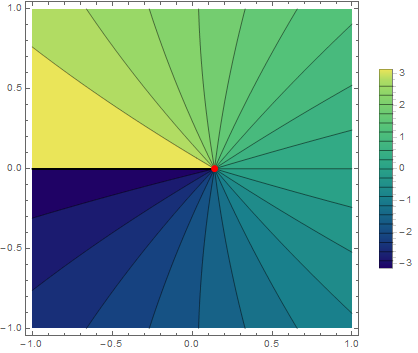}} \resizebox{15.0cm}{!}{
\includegraphics{./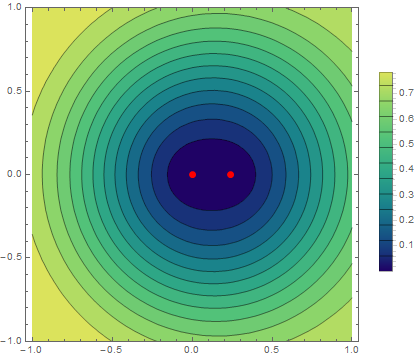} \qquad
\includegraphics{./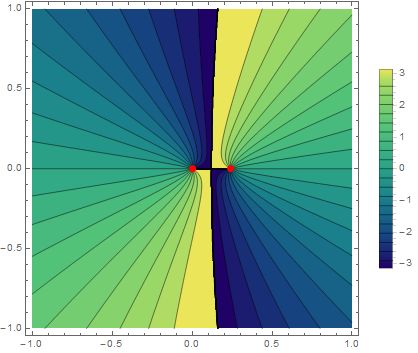}}
\end{center}
\caption{ The left and right columns illustrate the norm and phase of the
truncated solution $U$. Top: $m_{0}=6$ near $\Omega_{12,1}$. Center: $m_{0}=1$ near
$\Omega_{2,0}$. Bottom: $m_{0}=2$ near $\Omega_{3,0}$. }%
\label{fig-4}%
\end{figure}

Bifurcation occurs at the bifurcation curve $\Omega_{m_{0}+1,0}=2+\mathcal{O}%
(a^{2})$ (when $m_{0}\geq1$) in accordance with Lemma
\ref{lemma-last-bifurcation}, Propositions \ref{proposition-last-bifurcation} and
\ref{ThmLast}. By using (\ref{last-root-v}) and (\ref{ff}), we write explicitly
\[
U(r,\theta)=a\left[  p_{m_{0},0}(r)+bc_{m_{0}+1}p_{m_{0}+1,0}(r)e^{i\theta
}+bc_{-m_{0}+1}p_{m_{0}-1,0}(r)e^{-i\theta}\right]  e^{-r^{2}/2}%
e^{im_{0}\theta} + \phi(r,\theta)e^{im_{0}\theta},
\]
where $(c_{m_{0}+1},c_{-m_{0}+1})$ is obtained from the eigenvector in
(\ref{eigenvalues-1a}) and $\phi = \mathcal{O}_X(a b^2)$, see Remark \ref{remark-last}.
In particular, we have $c_{m_{0}+1}>0$ and $c_{-m_{0}+1}<0$.

\begin{remark}
If $m_0 = 1$ and $0< b \lesssim a$, the simple zero of $U(r,\theta)$ near the origin is located
at $\rho = b|c_0|+\mathcal{O}(b^{2})$. The degree of $U$ near the
simple zero at $(r,\theta) = (\rho,\pi)$ is again $+1$, so that the corresponding vortex has charge one.
Since no other zeros of $U(r,\theta)$ are located near the origin,
the bifurcating solution at the secondary branch corresponds to the
asymmetric vortex obtained in \cite{PeKe13} (center panel of Figure \ref{fig-4}).
\end{remark}

\begin{remark}
If $m_{0}=2$ and $0< b \lesssim a$, the double zero of $U(r,\theta)$ at the origin for $b = 0$
split to the distances $\rho_{\pm} = \mathcal{O}(b)$
according to the roots of the quadratic equation
\begin{equation}
\label{quadratic-roots}
r^2 \pm b |c_{-1}| \sqrt{2} + b^2 \beta = 0,
\end{equation}
where $\beta$ is a numerical constant obtained from the remainder term $\phi(r,\theta)$,
whereas the plus and minus signs correspond to the choice $\theta = 0$ and $\theta = \pi$ respectively.
Only positive roots of the quadratic equations (\ref{quadratic-roots})
are counted, and according to Lemma \ref{Preservation},
we should have the total of two positive roots at both sign combinations.
Indeed, if $\beta > 0$, the two positive roots $\rho_{\pm} = \mathcal{O}(b)$ exist for $\theta = \pi$
and no positive roots for $\theta = 0$, while if $\beta < 0$,
one positive root $\rho_+$ exists for $\theta = 0$ and one positive root exists for $\theta = \pi$.
In both cases, $\rho_+ \neq \rho_-$, so that the bifurcating solution at the secondary branch corresponds
to the asymmetric pair of two charge-one vortices obtained in \cite{Navarro} 
(bottom panel of Figure \ref{fig-4} in the case $\beta < 0$).
\end{remark}

\begin{remark}
If $m_0 \geq 3$ and $0< b \lesssim a$, the multiple root of $U(r,\theta)$ at the origin for $b = 0$
split to the distances $\rho_{\pm} = \mathcal{O}(b)$ according to the roots of the $n$-th order
polynomial equation, which is obtained from computations of the remainder term $\phi(r,\theta)$
up to the order of $b^n$. By Lemma \ref{Preservation}, there must exist exactly $n$
roots to the two polynomial equations for $\theta = 0$ and $\theta = \pi$ but the precise
characterization of these roots depend on the coefficients of the polynomial equation.
\end{remark}

\begin{remark}
Proposition \ref{proposition-zeros} and computations in Sections 5.1 and 5.2 yield
the proof of item (v) of Theorem \ref{theorem-main}. All items of Theorem \ref{theorem-main} have been proved.
\end{remark}

\noindent\textbf{Acknowledgement.} The authors are indebted to A. Contreras,
P. Kevrekidis and M. Tejada-Wriedt for discussions related to this project and collaboration.

\end{document}